\newcommand{\calA}{\mathcal A}
\newcommand{\calF}{\mathcal F}
\newcommand{\PSL}{{\rm PSL}}
\renewcommand{\r}{\rho}
\renewcommand{\P}{{\rm P}}
\newcommand{\ov}{\overline}
\newcommand{\Gr}{{\rm Gr}}
\newcommand{\SO}{{\rm SO}}
\renewcommand{\d}{\delta}
\newcommand{\g}{\gamma}
\renewcommand{\l}{\lambda}
\newcommand{\G}{\Gamma}
\newcommand{\K}{\mathbb K}
\newcommand{\Id}{{\rm Id}}
\newcommand{\R}{\mathbb R}
\newcommand{\Z}{\mathbb Z}
\renewcommand{\P}{\mathbb P}
\newcommand{\N}{\mathbb N}
\newcommand{\SL}{{\rm SL}}
\renewcommand{\l}{\lambda}
\newcommand{\Sp}{{\rm Sp}}
\renewcommand{\>}{\rangle}
\newcommand{\bord}{\partial}
\newcommand{\Hom}{{\rm Hom}}
\newcommand{\un}{\underline}
\newcommand{\dual}{\flat}
\newcommand{\dg}{\partial_{\infty}\Gamma}
\newcommand{\Pp}{\mathbb{P}}
\newcommand{\tv}{\hspace{1mm}\pitchfork\hspace{1mm}}
\newcommand{\ntv}{\hspace{1mm}\cancel{\pitchfork}\hspace{1mm}}
\newcommand{\pc}{{\rm pcr}}
\newcommand{\gc}{{\rm gcr}}
\newcommand{\bpm}{\begin{pmatrix}}
\newcommand{\epm}{\end{pmatrix}}
    \newcommand\quotient[2]{
        \mathchoice
            {% \displaystyle
                \text{\raise.5ex\hbox{$#1$}\big/\lower.5ex\hbox{$#2$}}%
            }
            {% \textstyle
                      \text{\raise.25ex\hbox{$#1$}\big/\lower.25ex\hbox{$#2$}}%
            }
            {% \scriptstyle
                #1\,/\,#2
            }
            {% \scriptscriptstyle  
                #1\,/\,#2
            }
    }
\theoremstyle{plain}
\newtheorem{thm}{Theorem}[section]
\newtheorem{lem}[thm]{Lemma}
\newtheorem{prop}[thm]{Proposition}
\newtheorem{cor}[thm]{Corollary}
\newtheorem*{teo*}{Theorem}
\newtheorem{notation}[thm]{Notation}
\theoremstyle{definition}
\newtheorem{example}[thm]{Example}
\newtheorem{defn}[thm]{Definition}
\newtheorem{remark}[thm]{Remark}
\newcommand{\thismonth}{\ifcase\month % case 0 --- impossible!
  \or January\or February\or March\or April\or May\or June%
  \or July\or August\or September\or October\or November%
  \or December\fi}
\title[Collar Lemma]{A collar lemma for partially hyperconvex surface group representations}
\author{Jonas Beyrer and Beatrice Pozzetti}
\date{\today}
\begin{document}
\thanks{J.B. acknowledges funding by the Deutsche Forschungsgemeinschaft (DFG, German Research Foundation), 338644254 (SPP2026), and the Schweizerischer Nationalfonds (SNF, Swiss Research Foundation), P2ZHP2 184022 (Early Postdoc.Mobility). B.P acknowledges funding by the DFG, 427903332 (Emmy Noether). Both authors acknowledge funding by the DFG, 281869850 (RTG 2229). BP thanks Tengren Zhang and Andres Sambarino for inspiring discussions on the topic of this article, we thank Nicolas Tholozan for an interesting discussion and for sharing his preprint \cite{TholPreprint} with us. We thank the anonymous referee for carefully reading the paper and the valuable remarks that helped improving its exposition}

\begin{abstract}
We show that a collar lemma holds for Anosov representations of fundamental groups of surfaces into $\SL(n,\R)$ that satisfy partial hyperconvexity properties inspired from Labourie's work. This is the case for several open sets of Anosov representations not contained in higher rank Teichm\"uller spaces, as well as for  $\Theta$-positive representations into $\SO(p,q)$ if $p\geq  4$. We moreover show that 'positivity properties' known for Hitchin representations, such as being positively ratioed and having positive eigenvalue ratios, also hold for partially hyperconvex representations.
\end{abstract}

\maketitle
\tableofcontents

\section{Introduction} 
\addtocontents{toc}{\protect\setcounter{tocdepth}{1}}
A fundamental result on the geometry of hyperbolic surfaces is the collar lemma which states that, in a hyperbolic surface $S_g$, every simple closed geodesic $c$ admits an embedded collar neighbourhood whose length diverges logarithmically as the length of $c$ shrinks to zero \cite{Keen}. This phenomenon is at the basis of various compactness results for moduli spaces, and admits algebraic reformulations useful to study the length spectrum of a hyperbolization: if two elements $\gamma$ and $\delta$ in the fundamental group $\pi_1(S_g)$ intersect geometrically, then there is an explicit lower bound on the length of $\gamma$ which is a function only dependent of the length of $\delta$. 

Higher rank Teichm\"uller theories, which include Hitchin representations and maximal representations, form connected components of the character variety $\Hom(\pi_1(S_g),\sf G)/\!/\sf G$ that consist only of discrete and faithful representations. These components form a robust generalization of the Teichm\"uller component, which is the only component with such property for the group $\sf G=\SL(2,\R)$. A number of geometric features of holonomies of hyperbolizations have been generalized for such higher rank Teichm\"uller theories (after  minor algebraic reformulations): this is the case for Basmajian and McShane identities \cite{LabMc,VY, FP} and for the collar lemma, which was proven for Hitchin representations by Lee-Zhang \cite{LZ} and for maximal representations by Burger-P. \cite{BP}. It was conjectured that the validity of the collar lemma distinguishes higher rank Teichm\"uller theories within the larger class of Anosov representations, the by now acclaimed generalization of convex cocompactness to higher rank.

In this paper we show that, instead, a collar lemma holds also beyond higher rank Teichm\"uller theories, and we generalize it to other (conjectural) classes of higher rank Teichm\"uller theories. 
To be more precise we study Anosov representations $\rho:\G \to \SL(E)$ of fundamental groups of surfaces $\G=\pi_1(S)$ on a real vector space $E$ of dimension $d$. Given such a representation, and for every  element $g\in \G$, we denote   by $\lambda_1(\r(g)),\ldots, \lambda_d(\rho(g))$ the (generalized) eigenvalues of the matrix $\rho(g)$ ordered so that their absolute values are non-increasing. Moreover we call two elements $g,h\in\G=\pi_1(S_g)$ \emph{linked}, if the corresponding closed geodesics with respect to some (and thus any) hyperbolic metric intersect in $S_g$.

Under specific hyperconvexity assumptions inspired from Labourie's work, called property $H_k$ and property $C_k$, we show

\begin{thm}\label{thm.INTROcollar lemma}
Let $\r:\G\to \SL(E)$ be an Anosov representation satisfying properties $H_{k-1},H_{k}, H_{k+1},H_{d-k-1},H_{d-k}, H_{d-k+1}$ and $C_{k-1}$, $C_k$. Then for any linked pair $g,h\in \G$ it holds
\begin{align*}
\frac{\lambda_1\ldots\lambda_k}{\lambda_d\ldots\lambda_{d-k+1}}(\rho(g))>\left(1-\frac{\lambda_{k+1}}{\lambda_k}(\rho(h))\right)^{-1}.
\end{align*}
\end{thm}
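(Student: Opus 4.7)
The plan is to combine the dynamics of $\rho(g)$ on the appropriate flag manifold with a positivity/contraction estimate coming from $\rho(h)$ and the partial hyperconvexity hypotheses, following the broad strategy of \cite{LZ,BP}. Let $\xi: \di\G \to \calF$ denote the Anosov boundary map and write $x^\pm = \xi(g^\pm)$, $y^\pm = \xi(h^\pm)$ for the images of the attracting/repelling fixed points; linkedness of $g$ and $h$ gives the cyclic ordering $g^+, h^+, g^-, h^-$ on $\di\G \cong S^1$.

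The main object I expect to use is a $k$-th cross-ratio $\CR{k}{a,b,c,d}$ on $4$-tuples of flags, built from a natural pairing of the $\xi^k$- and $\xi^{d-k}$-components (in the spirit of Labourie and Bonahon). Two features drive the argument: a transformation law showing that $\CR{k}{\rho(g)a,\rho(g)b,c,d}$ picks up the factor $\frac{\lambda_1\cdots\lambda_k}{\lambda_d\cdots\lambda_{d-k+1}}(\rho(g))$ relative to $\CR{k}{a,b,c,d}$ when $(a,b)=(x^+,x^-)$; and strict positivity of $\CR{k}{x^+,x^-,y^+,y^-}$ whenever the boundary quadruple is cyclically ordered. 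The transformation law is a direct eigenvalue computation on $\Lambda^k E$. The positivity is the technical heart, and is where I expect the partial hyperconvexity properties $H_{k\pm 1}$, $H_{d-k\pm 1}$ together with $C_{k-1}, C_k$ to enter: they should ensure transversality of all relevant subspace pairs appearing in the definition of $\CR{k}{\cdot}$ and fix a consistent sign, simultaneously yielding a ``positively ratioed'' feature and the ability to add cross-ratios along cyclically ordered sequences of flags.

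With these ingredients in hand, I would run a telescoping argument. By $\xi$-equivariance, $\rho(h)^{n} x^- = \xi(h^n g^-)$ converges to $y^+$ in $\calF$ as $n\to\infty$, and the rate of convergence at the level of $\xi^k$ is governed by the eigenvalue gap $\lambda_{k+1}/\lambda_k(\rho(h))$ via the standard $P_k$-Anosov contraction estimate. Summing the additive cocycle relation for $\CR{k}{\cdot}$ along the orbit $(\rho(h)^n x^-)_{n\geq 0}$ produces a geometric series in $\lambda_{k+1}/\lambda_k(\rho(h))$ whose partial sums are uniformly bounded by $\bigl(1-\lambda_{k+1}/\lambda_k(\rho(h))\bigr)^{-1}$. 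Comparing this bound with the image of the same telescoping expression under the transformation law for $\rho(g)$, and using positivity to control the sign, yields the claimed strict inequality. The principal obstacle I foresee is establishing positivity of $\CR{k}{\cdot}$ under merely partial hyperconvexity: unlike the Hitchin or maximal setting where a single positive curve provides this for free, here one must carefully chain the $H_j$ and $C_j$ conditions to extract an unambiguous sign on the relevant 4-tuples of flags.
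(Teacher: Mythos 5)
Your sketch correctly identifies several of the right ingredients --- a Grassmannian cross ratio whose periods compute $\frac{\lambda_1\cdots\lambda_k}{\lambda_d\cdots\lambda_{d-k+1}}(\rho(g))$, and positivity of that cross ratio on cyclically ordered quadruples (the ``strongly positively ratioed'' property, Theorem \ref{prop.hyperconvex implies positively ratioed}) --- but the mechanism you propose for producing the factor $\bigl(1-\lambda_{k+1}/\lambda_k(\rho(h))\bigr)^{-1}$ is not the one in the paper, and as described it does not actually yield this bound. Telescoping the Grassmannian cross ratio $\gc_{d-k}$ along the orbit $\rho(h)^n x$ multiplies, at each step, by the \emph{weight} $\frac{\lambda_1\cdots\lambda_k}{\lambda_d\cdots\lambda_{d-k+1}}(\rho(h))$ (this is exactly Corollary \ref{cor.periods of k cross ratio}), not by the \emph{root gap} $\lambda_{k+1}/\lambda_k(\rho(h))$. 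If you chase your geometric series through, you end up comparing $\ell_{\omega_k+\omega_{d-k}}(\rho(g))$ to $\ell_{\omega_k+\omega_{d-k}}(\rho(h))$, i.e.\ the weaker statement of Corollary \ref{cor.weight collar lemma}, not Theorem \ref{thm.INTROcollar lemma}. The root gap is extracted in the paper not by a series but by a single algebraic identity of the \emph{projective} cross ratio inside the quotient pencil $\P(h_-^{d-k+1}/h_-^{d-k-1})$: one uses Proposition \ref{p.gc1} to write $\lambda_k/\lambda_{k+1}(\rho(h))$ as a cross ratio of four lines in this pencil and then invokes the identity $\pc(x_1,x_2,x_3,x_4)=1-\pc(x_1,x_2,x_4,x_3)$ (Lemma \ref{lem.symmetry of projective cro}(8)), which is what produces the factor $\bigl(1-\lambda_{k+1}/\lambda_k\bigr)^{-1}$ in one shot. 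Your sketch never passes to this quotient.

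There is a second, larger gap. After Step~1 the quantity you have in hand is $\gc_{d-k}\bigl(h_-^{d-k},h_+^k, gh_+^{k}, (g_+^k\cap h_-^{d-k+1})\oplus h_-^{d-k-1}\bigr)$, whose last slot is a ``hybrid'' $(d-k)$-plane that is \emph{not} a value of the boundary map $\xi^{d-k}$. The technical heart of the proof (Section \ref{s.collar}, Step~2, essentially all of Lemmas \ref{lem.col.combination of k-1,k+1 is lipschitz}--\ref{lem.horizontal fol}) is the construction of the Lipschitz interpolation $\eta(x)=(x^{d-k+1}\cap g_+^k)\oplus x^{d-k-1}$ from this hybrid plane to $g_+^{d-k}$, and the proof that $x\mapsto\gc_{d-k}(h_-^{d-k},h_+^k,gh_+^k,\eta(x))$ is monotone. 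This monotonicity is where $C_{k-1}$, $C_k$ and the additional $H_{k\pm1}$, $H_{d-k\pm1}$ genuinely enter, through the hyperconvexity of the $\R\P^2$-projections $\pi_X$ (Proposition \ref{prop.hyperconvex proj to RP2}) applied separately to a horizontal and a vertical foliation of the image of $\eta$. Your description of these hypotheses --- that they ``ensure transversality\ldots fix a consistent sign\ldots yield a positively ratioed feature'' --- misses this: being positively ratioed only requires $H_k$, $H_{d-k}$, and the remaining conditions have no visible role in your outline. Without the quotient-pencil identity and without the monotone interpolation argument, the proof does not go through.
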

Here $H_0, C_0$ and $H_d$ are empty conditions. We now introduce the hyperconvexity properties needed as assumptions in Theorem \ref{thm.INTROcollar lemma}, and some other consequences of these properties that we establish in the paper. 
\subsection*{Property $H_k$ and positively ratioed representations}
Recall that, for every $l\in\{1,\ldots, d-1\}$, a $l$-Anosov representation admits a continuous equivariant boundary map $\xi_l:\partial\G\to \Gr_l(E)$. 

Following Labourie \cite[Section 7.1.4]{Labourie-IM} we say
\begin{defn}
A representation $\rho:\G\to\SL(E)$ satisfies \emph{property $H_k$} if it is $\{k,d-k-1,d-k+1\}$-Anosov, and  for every pairwise distinct triple $x,y,z\in\partial\G$ the sum
$$(\xi_k(x)\cap\xi_{d-k+1}(z))+(\xi_k(y)\cap\xi_{d-k+1}(z))+\xi_{d-k-1}(z)$$
is direct. 
\end{defn}

In \cite{MZ} Martone and Zhang introduced the notion of positively ratioed representations: those are Anosov representations that satisfy some additional 'positivity' property ensuring that suitable associated length functions can be computed as intersection with a geodesic current. In the same paper they have shown that the most studied examples of representations in higher rank Teichm\"uller theories, i.e. Hitchin and maximal representations, satisfy this positivity. 

We add to this by showing that representations satisfying properties $H_k$, $H_{d-k}$ are also positively ratioed. This provides new examples of this notion and in particular the first open sets in $\Hom(\G,\SL(E))$ of positively ratioed representations that are not in higher Teichm\"uller spaces:% and lets us deduce several geometric properties of representations with property $H_k,H_{d-k}$ from \cite{MZ}.}

\begin{thm}\label{thm.INTROhyperconvex implies positively ratioed}
Let $\r:\G\to\SL(E)$ be a $\{k-1,k,k+1\}-$Anosov representation satisfying property $H_k$ and $H_{d-k}$. Then $\r$ is $k-$positively ratioed.
\end{thm}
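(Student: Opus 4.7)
The plan is to follow the Bonahon--Martone--Zhang template: construct a continuous, $\G$-invariant, strictly positive cross-ratio $b_k$ on positive $4$-tuples in $\di\G$ from the boundary maps $\xi_k$ and $\xi_{d-k}$, check that it recovers the correct $k$-spectral length $\log|\l_1\cdots\l_k/\l_{d-k+1}\cdots\l_d|(\r(\gamma))$ on periodic orbits, and then invoke the standard currents machinery to produce a geodesic current $\mu_\r^k$ with $i(\mu_\r^k,\delta_\gamma)=\ell_k^\r(\gamma)$. Concretely one uses the projections $\pi_{x,y}:E\to\xi_k(x)$ parallel to $\xi_{d-k}(y)$ (well defined by $\{k\}$-Anosov transversality) and takes a determinant of a suitable composition depending on all four points; this $b_k$ is H\"older continuous by Anosov regularity and evaluates correctly on $(\gamma^+,\gamma^-,\gamma\cdot t,t)$ by a direct spectral computation using the eigenspace decomposition of $\r(\gamma)$.

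The heart of the proof is to establish strict positivity $b_k(x,y,z,w)>0$ on every positively oriented $4$-tuple $(x,y,z,w)\in(\di\G)^{(4)}$. Here properties $H_k$ and $H_{d-k}$ enter crucially: $H_k$ provides, for distinct $x,y,z\in\di\G$, a direct sum
\[
\xi_{d-k+1}(z)=\bigl(\xi_k(x)\cap\xi_{d-k+1}(z)\bigr)\oplus\bigl(\xi_k(y)\cap\xi_{d-k+1}(z)\bigr)\oplus\xi_{d-k-1}(z),
\]
so the two lines $\xi_k(\cdot)\cap\xi_{d-k+1}(z)$ project to two \emph{distinct} points on the projective line $\Pp\bigl(\xi_{d-k+1}(z)/\xi_{d-k-1}(z)\bigr)\cong\Pp^1$, while $H_{d-k}$ provides the analogous statement on the dual side. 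Using these projective reductions one factors $b_k(x,y,z,w)$ as a product of classical cross-ratios on $\Pp^1$ of four points of the form $\xi_k(\cdot)\cap\xi_{d-k+1}(z)$ (and their duals). Strict positivity then reduces to showing that these four projected points appear in positive cyclic order on $\Pp^1$, which I would verify by a continuity and connectedness argument: as $x,y,w$ vary along $\di\G$ while $z$ is held fixed, the four projected points move continuously and, by $H_k$, $H_{d-k}$ combined with $\{k-1,k+1\}$-Anosov, never collide; hence the cyclic order is locally constant on the connected space of positive $4$-tuples and can be read off at a single Fuchsian/convex-cocompact test configuration.

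Once $b_k$ is continuous, strictly positive on positive quadruples, and realises the correct periods, the Martone--Zhang construction (following Bonahon) produces the desired geodesic current $\mu_\r^k$; the strict (rather than merely nonnegative) positivity of $b_k$ is precisely what upgrades the conclusion from ``positively ratioed'' to \emph{strongly} positively ratioed via Proposition \ref{p.posvsstr}. The main obstacle is the positivity step: the partial-hyperconvexity hypotheses $H_k,H_{d-k}$ are just strong enough to set up the reduction to projective cross-ratios on $\Pp^1$, but one must then propagate the cyclic order of the four resulting points globally over $\di\G$ without the Frenet positivity available only in the Hitchin setting, and it is here that the full $\{k-1,k,k+1\}$-Anosov hypothesis must be brought to bear to exclude the residual degenerations.
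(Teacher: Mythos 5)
Your proposal reconstructs the right cross ratio $\gc_k$ and identifies the correct positivity target, but the central step is not viable as stated, and a secondary step imports an unstated hypothesis.

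The claimed exact factorization of $\gc_k(x,y,z,w)$ as a product of classical projective cross ratios of the projected points $\xi^k(\cdot)\cap \xi^{d-k+1}(z)$ (and duals) does not exist for $k>1$. The reduction of a Grassmannian cross ratio to a projective one (Proposition \ref{prop.projective and k-cross ratio}) requires that one of the two pairs of same-dimensional entries intersects in codimension $1$: one needs $P\cap Q\in\Gr_{k-1}$. For a generic cyclically ordered quadruple the intersections $\xi^k(x)\cap\xi^k(w)$ and $\xi^{d-k}(y)\cap\xi^{d-k}(z)$ have dimension $\max(0,2k-d)$, respectively $\max(0,d-2k)$, which is never $k-1$, resp.\ $d-k-1$, unless $k=1$ or $k=d-1$. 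So no pivot $z$ collapses $\gc_k$ to a product of $\Pp^1$ cross ratios of the $P_z$-projections. What \emph{does} factor through a $\Pp^1$ quotient is the \emph{infinitesimal} cross ratio $\gc_k(x^k,y^{d-k},y^{d-k}_t,w^k)$: here $y^{d-k}$ and $y^{d-k}_t$ are infinitesimally close points on the $C^1$ curve $\xi^{d-k}$ (whose $C^1$ regularity is exactly what $H_{d-k}$ buys), so they share the $(d-k-1)$-plane $y^{d-k-1}$ and Proposition \ref{prop.projective and k-cross ratio} applies. The paper's proof therefore differentiates $z\mapsto\gc_k(x,y,z,w)$ along $\xi^{d-k}$, proves this derivative is non-vanishing using $H_k$ (via injectivity of $P_y$ and Lemma \ref{lem.non-zero derivative of projective cross ratio}), fixes its sign by the divergence $\gc_k(x,y,z_n,w)\to\infty$ as $z_n\to w$, and integrates via the cocycle identity (Lemma \ref{lem property of grassmannian cro}(3)). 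Your ``product of $\Pp^1$ cross ratios'' should be replaced by this derivative-plus-fundamental-theorem argument; the factorization exists only in the limiting telescoping sense.

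Two smaller issues. First, detouring through the Bonahon--Martone--Zhang geodesic current construction is unnecessary for this theorem: ``$k$-strongly positively ratioed'' is, by definition, the inequality $\gc_k(x,y,z,w)>1$ on cyclically ordered quadruples, and $\gc_k>0$ already follows from $k$-Anosov alone (this is the content of the first part of Proposition \ref{p.posvsstr}). You need the sharp $>1$, and the currents machinery is a downstream application, not an ingredient. Second, pinning down the sign ``at a single Fuchsian/convex-cocompact test configuration'' implicitly assumes a path in the space of $H_k\cap H_{d-k}$ representations joining $\rho$ to a Fuchsian one. That is not a hypothesis, and the $H_k$, $C_k$ loci are only known to be \emph{unions} of connected components of strongly irreducible Anosov representations; nothing guarantees the Fuchsian locus is in the relevant component. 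The paper avoids this entirely: cyclic-order preservation under $P_x$ follows from $P_x$ being a continuous injection of circles hence a homeomorphism (Proposition \ref{prop.properties of proj Px}(2)), and the sign of the derivative of $\gc_k$ is fixed intrinsically by the divergence as $z\to w$.
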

It is also possible to deduce Theorem \ref{thm.INTROhyperconvex implies positively ratioed} following the lines of Labourie's proof for Hitchin representations \cite[Section 4.4]{Labourie-IHES} using that, whenever a representation $\rho$ has property $H_k$, the image of its associated boundary map is a $C^1$-circle in $\Gr_k(E)$ \cite[Proposition 8.11]{PSW}.
 The argument we provide here is, however, more direct and closer to the circle of ideas important in the rest of the paper.

Theorem \ref{thm.INTROhyperconvex implies positively ratioed} lets us add to Martone-Zhang's list of positively ratioed representations a few more representations in (conjectural) higher Teichm\"uller theories:
 Hitchin representations into $\SO(p,p)$ and 
$\Theta$-positive representations into $\rho:\G\to\SO(p,q)$ as introduced by Guichard-Wienhard \cite{GWpositivity}. A straightforward computation shows that $\rho$ satisfies property $H_k$ if and only if the dual $\r^{\dual}$ satisfies property $H_{d-k}$. Moreover representations into $\SO(p,q)$ are self dual. It was proven in  \cite[Theorem 9.9]{PSW} and \cite[Theorem 10.1]{PSWB} that representations in the Hitchin component in $\SO(p,p)$ and $\Theta$-positive representations in $\SO(p,q)$  satisfy property $H_k$. As a result we obtain:

\begin{cor}The following are examples of positively ratioed representations:
\begin{enumerate}
\item If $\rho:\G\to\SO(p,p)$ belongs to the Hitchin component, then it is $k$-positively ratioed for $1\leq k\leq p-2$, and both irreducible factors of $\wedge^p\rho$ are 1-positively ratioed.
\item If $\rho:\G\to\SO(p,q)$ is $\Theta-$positive in the sense of Guichard-Wienhard, then it is $k$-positively ratioed for $1\leq k\leq p-2$.
\end{enumerate}
\end{cor}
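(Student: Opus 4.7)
The plan is to reduce both statements to Theorem \ref{thm.INTROhyperconvex implies positively ratioed}. For every admissible $k$ I need to check three things: that the representation is $\{k-1,k,k+1\}$-Anosov, that it has property $H_k$, and that it has property $H_{d-k}$. The first is automatic for Hitchin representations (which are $\{l\}$-Anosov for every $1\leq l\leq d-1$) and is part of the definition of $\Theta$-positive representations into $\SO(p,q)$ for all $l\in\{1,\dots,p-1\}$, which covers the ranges $k-1,k,k+1\in\{1,\dots,p-1\}$ precisely when $1\leq k\leq p-2$. Property $H_k$ in the same range is exactly \cite[Theorem 9.9]{PSW} for the Hitchin case in $\SO(p,p)$ and \cite[Theorem 10.1]{PSWB} for $\Theta$-positive representations in $\SO(p,q)$.

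For property $H_{d-k}$ I invoke the duality observation already recorded in the excerpt: $\rho$ has property $H_k$ if and only if $\rho^{\dual}$ has property $H_{d-k}$. Since $\SO(p,q)$ preserves a non-degenerate symmetric form, every $\SO(p,q)$-representation is isomorphic to its dual, so property $H_{d-k}$ is equivalent to property $H_k$ in this setting. Because the range $1\leq k\leq p-2$ is symmetric under $k\mapsto d-k$ (with $d=2p$ or $d=p+q$, $d-k\geq p+2$ lies outside the stated range, but the dual representation is the same representation), we obtain $H_{d-k}$ for free and Theorem \ref{thm.INTROhyperconvex implies positively ratioed} then yields $k$-strong positivity for $1\leq k\leq p-2$ in both (1) and (2).

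The one extra case is the $\wedge^p\rho$ statement in (1). When $\rho:\G\to\SO(p,p)$ is Hitchin, the representation $\wedge^p\rho$ splits as a sum of two irreducible pieces, associated to the two spinor-type half-spin components singled out by the Hodge star of the invariant quadratic form; call them $\wedge^p_{\pm}\rho:\G\to\SL(E_{\pm})$. Each is an Anosov representation onto its image, and I want to apply Theorem \ref{thm.INTROhyperconvex implies positively ratioed} at level $k=1$, for which I must check that $\wedge^p_{\pm}\rho$ is $\{1,2\}$-Anosov and has properties $H_1$ and $H_{\dim E_{\pm}-1}$. The Anosov property and property $H_1$ are precisely the content of \cite[Theorem 9.9]{PSW}, which identifies the limit curves of $\wedge^p_{\pm}\rho$ inside the Grassmannians of lines of $E_{\pm}$; property $H_{\dim E_{\pm}-1}$ then follows by the same duality argument as above, as each factor is self-dual (the bilinear form on $\wedge^p\R^{2p}$ restricts to a non-degenerate pairing on each half-spin factor). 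Theorem \ref{thm.INTROhyperconvex implies positively ratioed} then gives $1$-strong positivity for each factor.

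The main obstacle is purely bookkeeping in the last case: verifying that the restrictions of the standard quadratic form to the two eigenspaces of the Hodge star identify $\wedge^p_{\pm}\rho$ with their own duals (so that $H_{d_\pm-1}$ follows from $H_1$), and that the boundary map provided by \cite[Theorem 9.9]{PSW} is really the $\xi_1$ of $\wedge^p_{\pm}\rho$ rather than a composition. Once that identification is in place, everything else is a direct citation of previously established facts together with a single application of Theorem \ref{thm.INTROhyperconvex implies positively ratioed}.
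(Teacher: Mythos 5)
Your plan matches the paper's (very brief) justification, which the paper gives only as the paragraph preceding the corollary: cite \cite[Theorem~9.9]{PSW} and \cite[Theorem~10.1]{PSWB} for property $H_k$, invoke self-duality of $\SO(p,q)$-representations to upgrade to $H_{d-k}$, and apply Theorem~\ref{thm.INTROhyperconvex implies positively ratioed}. The self-duality step correctly uses the proposition that $\rho$ has $H_k$ if and only if $\rho^\dual$ has $H_{d-k}$, together with $\rho\cong\rho^\dual$; you don't need $d-k$ to lie in the stated range, exactly as you observe. For the $\wedge^p$ factors, your reduction to $H_1$ and $H_{\dim E_\pm -1}$ via self-duality of the half-spin pieces (each carries a non-degenerate $\SO(p,p)$-invariant form coming from the restriction of the pairing on $\wedge^p\R^{2p}$ to the Hodge-star eigenspaces) is the right bookkeeping, and you are right that the only point requiring care is that the PSW boundary curve really is the projective limit map of $\wedge^p_\pm\rho$. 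One small caveat worth recording: Section~\ref{s.hyp} of the paper quotes PSWB as giving $H_k$ for $\Theta$-positive representations only in the range $1\leq k<p-2$, which falls one short of the $1\leq k\leq p-2$ claimed in the corollary; this is an internal inconsistency of the paper rather than a flaw in your argument (the corollary's range is what you need to import from the cited theorems, and you cite them for exactly that range).
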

%\end{comment}

We now turn to the second feature of  representations satisfying property $H_k$, which could be of independent interest. This justifies why in Theorem \ref{thm.INTROcollar lemma} we do not need to take the absolute value:

\begin{prop}\label{prop.INTROroots are positive for Hk}
If $\r$ satisfies property $H_k$, then for every $h\in\G\backslash\{e\}$ we have $$\frac{\lambda_k}{ \lambda_{k+1}}(\rho(h))>1;$$ equivalently the $k-$th and $(k+1)-$th eigenvalue of $\rho(h)$ have the same sign.
\end{prop}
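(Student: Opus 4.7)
The plan is to encode $\lambda_k$ and $\lambda_{k+1}$ as the diagonal action on a $2$-dimensional quotient $Q$, to build an $h$-equivariant homeomorphism $\partial\G \cong \P(Q) \cong \P^1(\R)$ out of the boundary map of $\rho$, and to deduce from the orientation-preserving character of the $\G$-action on $\partial\G$ that $\det \diag(\lambda_k,\lambda_{k+1}) = \lambda_k\lambda_{k+1} > 0$. Since being $l$-Anosov is equivalent to being $(d-l)$-Anosov, the hypothesis in fact gives $\{k-1,k,k+1,d-k-1,d-k,d-k+1\}$-Anosov; for any $h\in\G\setminus\{e\}$ (of infinite order, as $\G$ is a closed surface group), these four gaps force strict inequalities of the moduli of $\lambda_{k-1},\lambda_k,\lambda_{k+1},\lambda_{k+2}$, and as complex eigenvalues come in conjugate pairs of equal modulus, $\lambda_k$ and $\lambda_{k+1}$ are simple real eigenvalues of $\rho(h)$. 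Let $h^\pm \in \partial\G$ denote the attracting and repelling fixed points of $h$ and set $Q := \xi_{d-k+1}(h^-)/\xi_{d-k-1}(h^-)$. The generalized eigenspaces $E_{\lambda_k},E_{\lambda_{k+1}}$ are contained in $\xi_{d-k+1}(h^-)$ but not in $\xi_{d-k-1}(h^-)$, hence descend to a basis $\{[e_1],[e_2]\}$ of the $2$-dimensional $Q$, in which $\rho(h)|_Q = \diag(\lambda_k,\lambda_{k+1})$.

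I define $\iota\colon\partial\G\setminus\{h^-\}\to\P(Q)$ by $\iota(x) := \pi(\xi_k(x)\cap\xi_{d-k+1}(h^-))$, with $\pi$ the quotient projection. Transversality of $\xi_k(x)$ and $\xi_{d-k-1}(h^-)$ for $x\neq h^-$ (a consequence of $(k+1)$-Anosov applied to $\xi_{k+1}\supset\xi_k$) makes the intersection a line, so $\iota$ is well-defined and continuous; it is manifestly $h$-equivariant, satisfies $\iota(h^+) = [e_1]$, and is injective by applying property $H_k$ with $z = h^-$. I then extend to $\tilde\iota\colon\partial\G\to\P(Q)$ by setting $\tilde\iota(h^-) := [e_2]$: taking a compact fundamental domain $K\subset\partial\G\setminus\{h^\pm\}$ for the action of $\langle h\rangle$, the injectivity forces $\iota(K)$ to be a compact subset of $\P(Q)\setminus\{[e_1]\}$; writing any $x$ near $h^-$ as $x = h^{-n}y$ with $y\in K$ and $n$ large, one has $\iota(x) = \diag(\lambda_k^{-n},\lambda_{k+1}^{-n})\cdot\iota(y)\to[e_2]$ uniformly. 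Since $[e_2]$ is not itself in the image of $\iota$ (a preimage would force a second $h$-fixed point in $\partial\G$), the extension $\tilde\iota$ is injective.

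A continuous injection from the compact circle $\partial\G$ into $\P(Q)\cong S^1$ is automatically a topological embedding, and a topologically embedded circle inside $S^1$ must fill it, so $\tilde\iota$ is a homeomorphism $\partial\G\to\P(Q)$. The $\G$-action on $\partial\G$ is by orientation-preserving homeomorphisms of $S^1$, so conjugation by $\tilde\iota$ renders the projective action of $\diag(\lambda_k,\lambda_{k+1})$ on $\P^1(\R)$ orientation-preserving; but a diagonal element of $\GL(2,\R)$ acts on $\P^1(\R)\cong S^1$ preserving orientation if and only if its determinant is positive, so $\lambda_k\lambda_{k+1} > 0$, which together with the Anosov gap $|\lambda_k|>|\lambda_{k+1}|$ yields $\lambda_k/\lambda_{k+1} > 1$. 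The delicate point is the continuous extension at $h^-$: it relies on the uniform contraction of $\rho(h)^{-n}$ towards $[e_2]$ on $\P(Q)$, which uses injectivity to keep $\iota(K)$ away from the repelling fixed point $[e_1]$. Once this is set up, the orientation-on-$S^1$ argument concludes at once.
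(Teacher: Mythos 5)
Your construction of the map $\tilde\iota$ is exactly the paper's projection $P_{h_-}:\dg\to\P\bigl(\xi^{d-k+1}(h_-)/\xi^{d-k-1}(h_-)\bigr)$ (Definition~\ref{def.Px}); both rely on $H_k$ for injectivity (Proposition~\ref{prop.P projec injective}) and on continuity of the extension at $h_-$ (Proposition~\ref{prop.properties of proj Px}). Where the routes diverge is the final step. The paper invokes the eigenvalue identity of Proposition~\ref{p.gc1}, writes $\lambda_k/\lambda_{k+1}(\rho(h))$ as a projective cross ratio of four points in $\P^1$ that $P_{h_-}$ places in cyclic order, and then cites the sign property of cyclically ordered cross ratios (Lemma~\ref{lem.symmetry of projective cro}(10)). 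You instead argue directly with orientation: once $\tilde\iota$ is a homeomorphism $\dg\to\P(Q)$ conjugating $h$, which acts on $\dg$ preserving orientation, to the projective action of $\diag(\lambda_k,\lambda_{k+1})$ on $\R\P^1$, that diagonal matrix must preserve orientation, i.e.\ $\det=\lambda_k\lambda_{k+1}>0$; combined with the Anosov gap $|\lambda_k|>|\lambda_{k+1}|$ this gives the conclusion. Both arguments are correct. The cross-ratio version fits the paper's toolkit, which it reuses heavily in the collar-lemma proof; your version avoids Proposition~\ref{p.gc1} entirely and makes the role of orientation-preservation of the surface-group action on the boundary circle explicit, which is arguably the more conceptual explanation of why the eigenvalue ratio is positive. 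Your small surrounding arguments — the real simplicity of $\lambda_k,\lambda_{k+1}$ from adjacent Anosov gaps, the contraction estimate using a compact fundamental domain $K$ for $\langle h\rangle$, and the observation that a preimage of $[e_2]$ would produce a third $h$-fixed point in $\dg$ — are all sound, though the last could be shortened: $[e_2]=[\xi^{d-k}(h_-)]$ and $\xi^k(x)\tv\xi^{d-k}(h_-)$ for $x\neq h_-$ already keeps $[e_2]$ out of $\iota$'s image.
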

One can show that for a $k-$Anosov representation $\r:\G\to\SL(E)$ and every non-trivial $g\in\G$
$$\frac{\lambda_1\ldots\lambda_k}{\lambda_d\ldots\lambda_{d-k+1}}(\rho(g))>1.$$
In particular, if $\r:\G\to\SL(E)$ is a representation satisfying properties $H_1,\ldots,H_k$ and $H_{d-k+1},\ldots,H_d$. Then the signs of all $\lambda_j(\rho(g))$ for $g\in\G\backslash\{e\}$ and $j\in \{1,\ldots,k+1\}\cup \{d-k,\ldots,d\}$ are equal.

\subsection*{Property $C_k$ and convexity of projections}
In the paper we introduce and study a second hyperconvexity property of representations, property $C_k$: 
\begin{defn}
A representation $\rho:\G\to\SL(E)$ satisfies \emph{property $C_k$} if it is $\{k,{k+1},{d-k-2},\allowbreak {d-k+1}\}$-Anosov, and  for every pairwise distinct triple $x,y,z\in\partial\G$ the sum
$$\xi_{d-k-2}(x)+\left(\xi_k(y)\cap\xi_{d-k+1}(x)\right)+\xi_{k+1}(z)$$
is direct. 
\end{defn}
We prove that property $C_k$ together with property $H_k$ implies that the shadow $\xi_x$ of the $k$-curve $\xi_k:\partial \G\to \Gr_k(E)$ in the projective plane associated to the quotient $\P\left(\xi_{d-k+1}(x)/\xi_{d-k-2}(x)\right)$ is itself hyperconvex; this means that the sum $\xi_x(y)\oplus \xi_x(z)\oplus \xi_x(w)=\R^3$ for all pairwise distinct $y,z,w\in \bord \G$.
\begin{prop}\label{prop.INTRO.projectionhyperconvex}
If $\rho:\G\to \SL(E)$  satisfies property $H_k$ an $C_k$, then for every $x\in\bord \G$ the curve
$$\left\{\begin{array}{ccl}
y&\mapsto &[\xi^{k}(y)\cap \xi^{d-k+1}(x)]\\
x&\mapsto &[\xi^{d-k-1}(x)]\\
\end{array}\right.$$
is a continuous hyperconvex curve in the plane $\P(\xi^{d-k+1}(x)/\xi^{d-k-2}(x))$. 
\end{prop}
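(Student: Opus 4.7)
The plan is to verify that $\psi : \bord \G \to \P(W)$, where $W := \xi^{d-k+1}(x)/\xi^{d-k-2}(x)$ has dimension $3$, is well-defined, continuous, and hyperconvex. For well-definedness at $y \neq x$, the $k$-Anosov transversality $\xi^k(y) + \xi^{d-k}(x) = E$ combined with $\xi^{d-k}(x) \subset \xi^{d-k+1}(x)$ gives $\dim L(y) = 1$, where $L(y) := \xi^k(y) \cap \xi^{d-k+1}(x)$; property $C_k$ applied to $(x, y, z)$ for any $z \neq x, y$ yields $E = \xi^{d-k-2}(x) \oplus L(y) \oplus \xi^{k+1}(z)$, so $L(y) \not\subset \xi^{d-k-2}(x)$ and $\psi(y) \in \P(W)$ is well-defined. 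Continuity on $\bord\G \setminus \{x\}$ follows from continuity of $\xi^k$ and the open nature of the transversality; at $x$ I would argue dynamically using $\gamma \in \G$ with $\gamma^+ = x$: the restriction of $\rho(\gamma)$ to $\xi^{d-k+1}(x)$ preserves $\xi^{d-k-2}(x)$, inducing an action on $W$ whose dominant eigendirection, by the Anosov gaps at $d-k-2$ and $d-k-1$ coming from $C_k$ and $H_k$, is precisely $[\xi^{d-k-1}(x)]$. Since $L(\gamma^n y) = \rho(\gamma)^n L(y)$, the iterates drive $\psi(\gamma^n y)$ to $\psi(x)$, giving continuity at $x$.

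For hyperconvexity, three distinct points must yield three lines spanning $W$. If one of them is $x$, say $y_3 = x$, spanning $W$ reduces to $L(y_1) \oplus L(y_2) \oplus \xi^{d-k-1}(x) = \xi^{d-k+1}(x)$, which is precisely property $H_k$ at $(y_1, y_2, x)$. If all $y_i \neq x$, a dimension count turns the claim into $L(y_1) \oplus L(y_2) \oplus L(y_3) \oplus \xi^{d-k-2}(x) = \xi^{d-k+1}(x)$. Property $H_k$ at each pair $(y_i, y_j, x)$ already makes the pairwise sums $L(y_i) + L(y_j) + \xi^{d-k-2}(x)$ direct of dimension $d-k$, so the only remaining task is to rule out $L(y_3) \subset L(y_1) + L(y_2) + \xi^{d-k-2}(x)$. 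Assuming this inclusion, write $\ell_3 = u + \alpha \ell_1 + \beta \ell_2$ with $u \in \xi^{d-k-2}(x)$ and $\alpha, \beta \neq 0$ (the non-vanishing follows from applying the same bad case to the permuted triples). The plan is then to invoke $C_k$ at $(x, y_3, y_1)$ and at $(x, y_3, y_2)$, which give two direct sums $E = \xi^{d-k-2}(x) \oplus L(y_3) \oplus \xi^{k+1}(y_i)$ for $i = 1, 2$; decomposing $\ell_2$ and $\ell_1$ uniquely in these splittings and substituting into the bad-case relation, cross-combining with property $H_k$ at $(y_1, y_2, y_3)$ (which yields the decomposition $\xi^{d-k+1}(y_3) = L_{y_3}(y_1) \oplus L_{y_3}(y_2) \oplus \xi^{d-k-1}(y_3)$), I expect to force $\ell_1$ and $\ell_2$ into a configuration incompatible with the direct sum $H_k$ at $(y_1, y_2, x)$, yielding the contradiction.

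The main obstacle is extracting this contradiction cleanly: any single application of $C_k$ at $(x, y_3, y_1)$ only rephrases the bad-case hypothesis (one finds $\ell_2 \in \xi^{d-k-2}(x) + L(y_3) + L(y_1)$, which is merely the bad case up to index permutation), so the two $C_k$-decompositions sharing the common summand $L(y_3)$ must be combined with $H_k$ at $(y_1, y_2, y_3)$ to generate the needed incompatibility. The cleanest execution I foresee proceeds via exterior algebra: hyperconvexity is equivalent to the non-vanishing of $\ell_1 \wedge \ell_2 \wedge \ell_3 \wedge \omega_0$ in $\wedge^{d-k+1}(\xi^{d-k+1}(x))$ (with $\omega_0$ a generator of $\wedge^{d-k-2}(\xi^{d-k-2}(x))$), and property $C_k$ supplies factorizations of $\wedge^d E$ against which this wedge can be detected as non-zero; this reformulation should make the combinatorial interplay between the two $C_k$-decompositions and $H_k$ transparent.
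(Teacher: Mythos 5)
Your proposal correctly handles well-definedness, continuity away from $x$, and hyperconvexity in the case where one of the three points equals $x$ (which reduces to property $H_k$). However, there are two genuine gaps. The more serious one, which you yourself flag, is that the hyperconvexity argument for three distinct $y_1,y_2,y_3\neq x$ is not completed. Your approach is purely linear-algebraic on the specific triples $(x,y_i,y_j)$ and $(y_1,y_2,y_3)$, and you correctly diagnose that applying $C_k$ merely rephrases the bad-case hypothesis. The exterior-algebra reformulation you sketch at the end does not escape this: $H_k$ and $C_k$ are transversality conditions on triples, while the hyperconvexity you need is a statement about a quadruple $(x,y_1,y_2,y_3)$, and there is no purely algebraic derivation of the latter from the former. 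The paper closes this gap with an ingredient you never invoke: the $C^1$ regularity of $\xi^k$ guaranteed by $H_k$ (Proposition \ref{pro 1-hyperconvex for property Hk}, from \cite{PSW}), whose tangent at $y^k$ is controlled by $y^{k+1}$. Concretely, one projects $\pi_X$ further to $\P\bigl(X/\pi_X(w^k)\bigr)\simeq\R\P^1$ and shows this projection $\pi_{x,w}$ restricted to each component of $\dg\setminus\{x,w\}$ is injective: $H_k$ makes it well-defined, $C_k$ keeps $[\pi_X(w^{k+1})]$ out of the image so that it maps intervals to intervals, and the $C^1$ regularity plus $C_k$ (the latter ensuring the projected tangent $[\pi_X(u^{k+1})]$ is nondegenerate) give local injectivity. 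This topological/monotonicity argument is what makes the quadruple statement follow from triple data, and your proposal contains no substitute for it.

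The second, smaller gap is the continuity at $x$. Your dynamical argument requires $x$ to be the attracting fixed point of some $\gamma\in\G$ (only a dense subset of $\dg$ satisfies this), and even at such a point it only establishes convergence $\psi(\gamma^n y)\to\psi(x)$ along the orbit of a fixed $y$, not convergence of $\psi(y_n)$ for an arbitrary sequence $y_n\to x$; upgrading the former to the latter needs a uniformity statement you have not supplied. The paper avoids this entirely by invoking the uniform Cartan-attractor estimate packaged in Proposition \ref{prop:continuity}, applied to the equivariant maps $F^{d-k+1}(x,y)=\bigl(y^k\cap x^{d-k+1}\bigr)\oplus x^{d-k-2}$ and $F^{d-k}(x,y)=\bigl(y^{k+1}\cap x^{d-k+1}\bigr)\oplus x^{d-k-2}$, whose transversality hypotheses are exactly property $C_k$.
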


We say that a representation $\rho:\G\to \SL(E)$ is \emph{Fuchsian} if it is  obtained composing a representation of $\SL(2,\R)$ with the holonomy of a hyperbolization. It is easy to check which Fuchsian representations  satisfy property $C_k$: if we split $E=E_1\oplus\ldots\oplus E_l$ as a direct sum of irreducible $\SL(2,\R)$-modules of non-increasing dimensions, the induced representation has property $C_k$ if and only if $\dim E_1-\dim E_2\geq 2k+3$. Furthermore, we show that representations satisfying property $C_k$ form a union of connected components of strongly irreducible representations that are Anosov in the right degrees:
\begin{prop}\label{prop.INTROCkclopen} 
 Property $C_k$ is open and closed among strongly irreducible $\{k,k+1,d-k-2, d-k+1\}-$Anosov representations satisfying property $H_{k}$.
\end{prop}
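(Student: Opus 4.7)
\emph{Openness.} The $\{k,k+1,d-k-2,d-k+1\}$-Anosov locus is open in $\Hom(\Gamma,\SL(E))$, and on it the boundary maps $\xi_l$ depend continuously on $\rho$ uniformly on $\partial\Gamma$. Since the three dimensions in the definition of $C_k$ sum to $d$, directness is equivalent to the non-vanishing of the natural determinant $\xi_{d-k-2}(x)\oplus(\xi_k(y)\cap\xi_{d-k+1}(x))\oplus\xi_{k+1}(z)\to E$, which is an open condition triple by triple. Since $\Gamma$ acts properly and cocompactly on the space $\partial^{(3)}\Gamma$ of pairwise distinct triples, a representation satisfying $C_k$ admits a uniform positive lower bound for this determinant on a compact fundamental domain, hence on all of $\partial^{(3)}\Gamma$; this bound persists under small perturbations of $\rho$.

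\emph{Closedness.} Let $\rho_n\to\rho$ with each $\rho_n$ strongly irreducible, Anosov in the required degrees, and satisfying both $H_k$ and $C_k$, where the limit $\rho$ also satisfies all these hypotheses except possibly $C_k$. The boundary maps $\xi_l^{\rho_n}$ converge uniformly to $\xi_l^\rho$, and $H_k$ for $\rho$ ensures that $\xi_k^\rho(y)\cap\xi_{d-k+1}^\rho(x)$ is one-dimensional for $y\ne x$. The bad set
\[
B:=\{(x,y,z)\in\partial^{(3)}\Gamma:\ \xi_{d-k-2}^\rho(x)+(\xi_k^\rho(y)\cap\xi_{d-k+1}^\rho(x))+\xi_{k+1}^\rho(z)\ne E\}
\]
is thus well defined, closed, and $\Gamma$-invariant. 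Passing to the three-dimensional quotient $V_x:=\xi_{d-k+1}^\rho(x)/\xi_{d-k-2}^\rho(x)$, the shadow curve $y\mapsto[\ell_y]:=[\xi_k^\rho(y)\cap\xi_{d-k+1}^\rho(x)]$ is convex (no three points collinear) by $H_k$, and failure of $C_k$ at $(x,y,z)$ translates into $\xi_k^\rho(y)\cap\xi_{d-k+1}^\rho(x)\subseteq W_{x,z}$, where $W_{x,z}:=\xi_{d-k-2}^\rho(x)+\xi_{k+1}^\rho(z)$ is a hyperplane of $E$.

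To conclude $B=\emptyset$, I would combine $\Gamma$-equivariance of the families $\{\xi_k^\rho(y)\}$ and $\{W_{x,z}\}$ with closedness of $B$ and the minimality of the $\Gamma$-action on $\partial\Gamma$ to propagate the constraint $\xi_k^\rho(y)\cap\xi_{d-k+1}^\rho(x)\subseteq W_{x,z}$ across a large portion of $\partial^{(3)}\Gamma$. The expected outcome is that the image of $\xi_k^\rho$ is forced into a proper $\rho(\Gamma)$-invariant subvariety of $\Gr_k(E)$, producing a proper non-zero $\rho(\Gamma)$-invariant subspace of $E$ and contradicting strong irreducibility. The main obstacle is precisely this last step: concretely producing the invariant subspace from a single bad triple. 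This will require a delicate use of the $C^1$-regularity of the $\xi_k$-shadow curve provided by $H_k$ together with the chord-through-tangent degeneracy characterizing the failure of $C_k$ in the projective plane $\P(V_x)$.
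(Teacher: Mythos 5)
Your openness argument is correct and is the standard one: the intersection $\xi_k(y)\cap\xi_{d-k+1}(x)$ is a line by $(d-k+1)$-Anosov, so directness is a determinant condition; cocompactness of $\Gamma$ on $\partial^{(3)}\Gamma$ converts this into a uniform lower bound that is stable under small perturbations of $\rho$. This matches the paper, which only sketches openness by reference to Labourie's argument for $H_k$.

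For closedness, your setup is right and your geometric intuition (failure of $C_k$ in $\P(V_x)$ as a chord-through-tangent degeneracy of the shadow curve) is exactly the right picture, but there is a genuine gap — one you yourself flag. You hope to go directly from a single bad triple for the limit $\rho$ to a $\rho(\Gamma)$-invariant subvariety and hence to an invariant subspace, via $\Gamma$-invariance and minimality. That route does not obviously work: a single degenerate triple, even propagated by the $\Gamma$-action, a priori gives only a closed invariant set of bad triples, and it is not clear why that closed set should have nonempty interior or should force all the lines $\xi_k(y)\cap\xi_{d-k+1}(x)$ into a fixed hyperplane. The extra ingredient the paper uses, and which you drop after introducing $\rho_n$, is the \emph{approximating sequence} $\rho_n$ together with two quantitative facts. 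First, Labourie's Lemma 10.2 already gives the right dichotomy in the form you need: strong irreducibility of $\rho$ rules out any \emph{open} set $U\subset\partial\Gamma$ on which $\pi_X(u_\rho^k)$ fails to be transverse to $\pi_X(z_\rho^{k+1})$ (and similarly with $k\leftrightarrow k+1$). Second, and this is the crux, because each $\rho_n$ satisfies $H_k$ and $C_k$, the shadow curve $\pi_{X_{\rho_n}}$ is hyperconvex, so its cross ratios $\gc_1^{X_{\rho_n}}$ on cyclically ordered quadruples are $>1$ and monotone in each slot. Writing $\gc_1^{X}$ as a $\gc_{d-k-1}$ on $E$ shows these cross ratios vary continuously with $n$. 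A bad triple $(x,y,z)$ for $\rho$ then forces $\gc_1^{X}(\pi_X(y_\rho^k),\pi_X(z_\rho^{k+1}),\pi_X(v_\rho^{k+1}),\pi_X(u_\rho^k))=\infty$ for suitably chosen auxiliary points $u,v$, and the monotonicity along the $\rho_n$ squeezes this to show that \emph{every} $w$ in a nontrivial arc $(w_0,y)_z$ also satisfies $\gc_1^X(\pi_X(w_\rho^k),\pi_X(z_\rho^{k+1}),\cdots)=\infty$, i.e. $\pi_X(w_\rho^k)\ntv\pi_X(z_\rho^{k+1})$. This produces the open set of non-transversality that contradicts strong irreducibility. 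In short: the invariant object is not manufactured directly from the bad triple; rather, the hyperconvexity of $\pi_X$ for each $\rho_n$ converts one bad triple of $\rho$ into an open set of bad configurations, which is where strong irreducibility (through Labourie's lemma) enters. Without appealing to the $\rho_n$ after their introduction, your argument cannot close.
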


\subsection*{Comparison to Lee-Zhang and higher rank Teichm\"uller theories}
In the case of Hitchin representations into $\SL_d(\R)$ Lee-Zhang \cite{LZ} proved a collar Lemma comparing $\frac{\lambda_k}{\lambda_{k+1}}(\rho(g))$ to $\frac{\lambda_1}{\lambda_d}(\rho(h))$. For $k\neq 1$ this is a stronger result than ours.\footnote{Lee-Zhang use in their proof strong properties of the Frenet curve associated to the Hitchin representations; a tool that we cannot use with our assumptions.} However in our generality, it is not to expect that $\frac{\lambda_1}{\lambda_d}(\rho(h))$ is well behaved (we do not assume that the  representations are $1-$Anosov). In particular under our assumptions comparing $k-$th root and $k-$th weight seems to be the natural choice.

Theorem \ref{thm.INTROcollar lemma} yields also new results for higher rank Teichm\"uller theories. Indeed we prove that Guichard-Wienhard's $\Theta$-positive representations into $\SO(p,q)$ satisfy property $C_k$:
\begin{prop}\label{p.INTROc1}
Let $\rho:\G\to \SO(p,q), p<q$ be $\Theta$-positive Anosov. For every $1\leq k\leq p-3$ %, and every $q+3\leq k\leq q+p-1$ 
the representation $\rho$ has property $C_k$.
\end{prop}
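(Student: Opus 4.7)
The plan is to apply the clopenness statement of Proposition \ref{prop.INTROCkclopen}, using the Fuchsian locus inside the $\Theta$-positive locus as a seed and propagating property $C_k$ by deformation.

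First, every $\Theta$-positive $\rho:\G\to\SO(p,q)$ belongs to the class of representations considered in Proposition \ref{prop.INTROCkclopen}: since the $\SO(p,q)$-Anosov positions are $\{1,\ldots,p-1\}\cup\{q+1,\ldots,d-1\}$, the inequalities $k+1\leq p-2$ and $d-k-2\geq q+1$ for $1\leq k\leq p-3$ imply that $\rho$ is $\{k,k+1,d-k-2,d-k+1\}$-Anosov (with $d-k+1=d$ read as $\xi_d=E$ when $k=1$, so that $\xi_k(y)\cap\xi_{d-k+1}(x)=\xi_1(y)$). Property $H_k$ in this range is granted by \cite[Theorem 10.1]{PSWB}, and off the Fuchsian locus $\Theta$-positive representations are strongly irreducible.

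Second, I would verify $C_k$ on a Fuchsian seed. The Fuchsian $\Theta$-positive representations arise from the principal embedding $\SL(2,\R)\to\SO(p,p-1)\hookrightarrow\SO(p,q)$ composed with a hyperbolization; as an $\SL(2,\R)$-module the standard representation decomposes into irreducibles as $V_{2p-1}\oplus V_1^{\,q-p+1}$, so in the notation of the introduction $\dim E_1=2p-1$ and $\dim E_2=1$. The Fuchsian criterion recalled before Proposition \ref{prop.INTROCkclopen} gives $C_k$ exactly when $\dim E_1-\dim E_2=2p-2\geq 2k+3$, i.e.\ $k\leq p-3$, which matches the stated range.

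Third, I would propagate $C_k$ by deformation. The condition $C_k$ is the non-vanishing of a continuous function on the compact quotient $\G\backslash(\partial\G)^{(3)}$ that depends continuously on the Anosov representation, hence it is open among $\{k,k+1,d-k-2,d-k+1\}$-Anosov representations. A sufficiently small $\Theta$-positive deformation of the Fuchsian seed is therefore still in $C_k$, and it can be chosen strongly irreducible by genericity, landing in the setting of Proposition \ref{prop.INTROCkclopen}. By its clopenness together with the connectedness of the strongly irreducible locus inside each $\Theta$-positive component, every strongly irreducible $\Theta$-positive $\rho$ satisfies $C_k$; the Fuchsian case was already covered above. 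The main obstacle I anticipate is ensuring that every connected component of the $\Theta$-positive locus in $\Hom(\G,\SO(p,q))/\!/\SO(p,q)$ contains a Fuchsian seed, which I expect from the classification of $\Theta$-positive components but which may require, in any component without Fuchsian representatives, an alternative direct positivity-of-triples-of-flags argument to produce an initial $C_k$-representation before invoking clopenness.
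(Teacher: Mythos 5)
Your deformation strategy is genuinely different from the paper's proof. The paper verifies $C_k$ \emph{directly} from the definition of $\Theta$-positivity of the boundary map: it reduces the transversality condition to checking that the $(d-k-1,d-k+1)$-entry of the positive elements $P(\ov v_1,\ldots,\ov v_{h/2})$ and their inverses is nonzero, which follows by an explicit induction using the block structure of the elementary matrices $E_j(v)$ and the fact that the Coxeter number $h$ is at least $2$. This is a purely algebraic argument that makes no use of connectedness or of the Fuchsian locus.

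Your proposal has a genuine gap that your own final paragraph already half-detects. Even granting that (i) the principal Fuchsian seed $\tau_{(2p-1,1,\ldots,1)}\circ\rho_{hyp}$ satisfies $C_k$ for $k\le p-3$ (this part is correct, by the Fuchsian criterion $d_1-d_2>2k+2$) and that (ii) small generic deformations are strongly irreducible and inherit $C_k$ by openness, the propagation step requires the set of strongly irreducible, $\{k,k+1,d-k-2,d-k+1\}$-Anosov, property-$H_k$ representations inside the $\Theta$-positive locus to be \emph{connected} — this is what clopenness in Proposition \ref{prop.INTROCkclopen} needs in order to extend the conclusion from a neighborhood of the Fuchsian seed to the whole locus. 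That connectedness is not established in the paper and is not at all obvious: the Fuchsian seed itself is reducible and sits outside the domain of Proposition \ref{prop.INTROCkclopen}, so you cannot even start from it, and whether the reducible/non-strongly-irreducible subset disconnects the remaining locus (or whether every such connected piece contains a point close to a Fuchsian representation) is left unproven. At the time of writing, even the basic structure of the $\Theta$-positive locus (that positivity implies Anosov, that these form connected components) was conjectural, so there is no classification to lean on. The paper's argument sidesteps all of this, and the "alternative direct positivity-of-triples-of-flags argument" you flag as a possible fallback is in fact the entirety of the paper's proof.
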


Thus Theorem \ref{thm.INTROcollar lemma} yields:

\begin{cor}
Let $\rho:\G\to \SO(p,q)$ be $\Theta$-positive Anosov. Then for any linked pair $g,h\in\G\setminus\{e\}$ and all $1\leq k \leq p-3$
\begin{align*}
\frac{\lambda_1\ldots\lambda_k}{\lambda_d\ldots\lambda_{d-k+1}}(\rho(g))>\left(1-\frac{\lambda_{k+1}}{\lambda_k}(\rho(h))\right)^{-1}.
\end{align*}
\end{cor}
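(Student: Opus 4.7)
The corollary looks like it should follow by a direct verification that the hypotheses of Theorem \ref{thm.INTROcollar lemma} are satisfied for $\Theta$-positive representations $\rho:\G\to \SO(p,q)$ in the given range of $k$. So the plan is simply to check the list of property $H_j$ and property $C_j$ hypotheses one by one, leveraging the self-duality of $\SO(p,q)$ and the results cited from \cite{PSWB} and Proposition \ref{p.INTROc1}.

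First I would record that $d=p+q$ and that, since $\rho$ takes values in $\SO(p,q)$, the representation is self-dual, so by the duality identity $H_j \iff H_{d-j}$ stated right before the corollary on $\Theta$-positive representations being positively ratioed. The key input from \cite[Theorem 10.1]{PSWB} is that $\Theta$-positive Anosov representations satisfy property $H_j$ for every $1\leq j\leq p-2$. By self-duality, this automatically gives $H_{d-j}$ for every $1\leq j\leq p-2$. Combined with the convention that $H_0$ and $H_d$ are empty conditions, one obtains property $H_j$ for $j\in\{0,1,\ldots,p-2\}\cup\{d-p+2,\ldots,d\}$.

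Next I would verify the six property $H$ hypotheses of Theorem \ref{thm.INTROcollar lemma} for $1\leq k\leq p-3$. The three "low" indices $k-1,k,k+1$ lie in $\{0,1,\ldots,p-2\}$ because $k+1\leq p-2$, so $H_{k-1},H_k,H_{k+1}$ are covered. The three "high" indices $d-k-1,d-k,d-k+1$ lie in $\{d-p+2,\ldots,d\}$ because $d-k-1\geq d-p+2$, so $H_{d-k-1},H_{d-k},H_{d-k+1}$ are also covered. Then I would verify the two property $C$ hypotheses $C_{k-1}$ and $C_k$: by Proposition \ref{p.INTROc1} property $C_j$ holds for all $1\leq j\leq p-3$, and $C_0$ is empty by convention, so both $C_{k-1}$ (for $j=k-1\in\{0,\ldots,p-4\}$) and $C_k$ (for $j=k\in\{1,\ldots,p-3\}$) hold.

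With all hypotheses of Theorem \ref{thm.INTROcollar lemma} verified, applying it directly to any linked pair $g,h\in\G\setminus\{e\}$ yields
\[
\frac{\lambda_1\ldots\lambda_k}{\lambda_d\ldots\lambda_{d-k+1}}(\rho(g))>\left(1-\frac{\lambda_{k+1}}{\lambda_k}(\rho(h))\right)^{-1},
\]
which is exactly the claim. I do not foresee any genuine obstacle: the proof is essentially bookkeeping, the only subtlety being the careful use of self-duality to translate the property $H$ statements for high indices into the already-established statements for low indices, so that no new PSWB-type input is needed for the indices $d-k-1,d-k,d-k+1$.
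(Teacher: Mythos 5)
Your proof is correct and takes essentially the same approach as the paper, which leaves the verification implicit (the corollary is stated immediately after Proposition \ref{p.INTROc1} with just the remark that ``Theorem \ref{thm.INTROcollar lemma} yields'' it). You fill in exactly the bookkeeping the authors intend: $H_j$ for the low indices $k-1,k,k+1$ comes from \cite{PSWB}, the high indices $d-k-1,d-k,d-k+1$ follow by self-duality of $\SO(p,q)$-representations via Proposition \ref{prop Hk for dual representation}, and $C_{k-1},C_k$ come from Proposition \ref{p.INTROc1} together with the convention that $C_0$ is empty.
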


 \subsection*{Geometric reformulations and counterexamples}
It is possible to give a more geometric reformulation of the collar lemma in terms of the naturally defined (pseudo) length functions 
$$\ell_{\omega_k+\omega_{d-k}}(\rho(g))=\log \left|\frac{\lambda_1\ldots\lambda_k}{\lambda_d\ldots\lambda_{d-k+1}}(\rho(g))\right|, \quad \ell_{\alpha_k}(\rho(h))=\left|\frac{\lambda_{k}}{\lambda_{k+1}}(\rho(h))\right|.$$
Here the first quantity corresponds to the translation length of $\rho(g)$ on the symmetric space endowed to the Finsler distance associated to the symmetrized $k$-weight. Instead the second quantity doesn't, in general, come from a metric on the symmetric space: for example, it doesn't satisfy the triangle inequality. On the other hand, $\ell_{\alpha_k}$ is, in many ways, a better generalization of the hyperbolic length function, at least for representation $\rho:\G\to\SL(E)$ satisfying property $H_k$: 
for example it is proven in \cite{PSW} that the associated entropy is constant and equal to one, %\footnote{Note that property $H_1$ is the same as $(1,1,2)-$hyperconvexity as in \cite{PSW}} 
and in \cite[Appendix A]{PSW} that the pressure metric associated to the first root  has, on the Hitchin component, more similarities to the Weyl-Petersson metric than the usual pressure metric.

Theorem \ref{thm.INTROcollar lemma} can be reformulated  in terms of these geometric quantities:
\begin{align*}
e^{\ell_{\omega_k+\omega_{d-k}}(\rho(g))}>\left(1-e^{-\ell_{\alpha_k}(\rho(h))}\right)^{-1}.
\end{align*}

Note that, since the eigenvalues $\lambda_i$ are ordered so that their modulus does not increase,  we have that $\ell_{\omega_k+\omega_{d-k}}(\rho(h))>\ell_{\alpha_k}(\rho(h))$ and thus
$$\left(1-e^{-\ell_{\alpha_k}(\rho(h))}\right)^{-1}>\left(1-e^{-\ell_{\omega_k+\omega_{d-k}}(\rho(h))}\right)^{-1}.$$

If one is only interested in the length function $\ell_{\omega_k+\omega_{d-k}}$, this yields the following version of the collar lemma.

\begin{cor}\label{cor.weight collar lemma}
If $\r:\G\to\SL(E)$ satisfies the assumptions of Theorem \ref{thm.INTROcollar lemma}, then
\begin{align*}
e^{\ell_{\omega_k+\omega_{d-k}}(\rho(g))}>\left(1-e^{-\ell_{\omega_k+\omega_{d-k}}(\rho(h))}\right)^{-1}.
\end{align*}
\end{cor}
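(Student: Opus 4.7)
The plan is that this corollary follows directly from Theorem \ref{thm.INTROcollar lemma} together with the elementary monotonicity observation made in the paragraph preceding the statement; no new geometric input is needed.

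First I would rewrite the eigenvalue inequality of Theorem \ref{thm.INTROcollar lemma} in its geometric form
\begin{align*}
e^{\ell_{\omega_k+\omega_{d-k}}(\rho(g))} > \left(1 - e^{-\ell_{\alpha_k}(\rho(h))}\right)^{-1}.
\end{align*}
For this to make sense one must check positivity: since $\rho$ has property $H_k$, Proposition \ref{prop.INTROroots are positive for Hk} guarantees $\lambda_{k+1}/\lambda_k(\rho(h)) > 0$, so this ratio equals $e^{-\ell_{\alpha_k}(\rho(h))}$ and lies strictly between $0$ and $1$ by the $k$-Anosov property (so the denominator is positive).

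Second, because the eigenvalue moduli $|\lambda_1| \geq \cdots \geq |\lambda_d|$ are ordered non-increasingly, a term-by-term comparison gives $|\lambda_i| \geq |\lambda_k|$ for $i \leq k$ and $|\lambda_{d-j+1}| \leq |\lambda_{k+1}|$ for $j \leq k$, whence
\begin{align*}
\left|\frac{\lambda_1 \cdots \lambda_k}{\lambda_d \cdots \lambda_{d-k+1}}(\rho(h))\right| \geq \left|\frac{\lambda_k}{\lambda_{k+1}}(\rho(h))\right|,
\end{align*}
that is, $\ell_{\omega_k+\omega_{d-k}}(\rho(h)) \geq \ell_{\alpha_k}(\rho(h))$.

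Third, since the function $x \mapsto (1-e^{-x})^{-1}$ is strictly decreasing on $(0,\infty)$, the previous inequality yields
\begin{align*}
\left(1 - e^{-\ell_{\alpha_k}(\rho(h))}\right)^{-1} \geq \left(1 - e^{-\ell_{\omega_k+\omega_{d-k}}(\rho(h))}\right)^{-1}.
\end{align*}
Chaining this with the first display finishes the proof. The only ``obstacle'' is purely bookkeeping: verifying that $\ell_{\alpha_k}(\rho(h)) > 0$ and $1 - e^{-\ell_{\alpha_k}(\rho(h))} > 0$, so that reciprocating preserves inequalities in the intended direction; both follow from property $H_k$ and the Anosov assumption.
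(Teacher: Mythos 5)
Your proof is correct and follows exactly the route the paper intends: combine Theorem~\ref{thm.INTROcollar lemma} with the monotonicity observation in the paragraph immediately preceding the statement, after verifying the positivity that makes the exponential rewriting legitimate. The positivity check via Proposition~\ref{prop.INTROroots are positive for Hk} and the $k$-Anosov property is exactly right, and so is the use of the strict monotonicity of $x\mapsto(1-e^{-x})^{-1}$.

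One small bookkeeping inaccuracy is worth flagging. You write that a term-by-term comparison gives $|\lambda_{d-j+1}|\leq|\lambda_{k+1}|$ for all $j\leq k$; this requires $d-j+1\geq k+1$, i.e.\ $j\leq d-k$, and therefore only holds as stated when $k\leq d/2$. (For instance with $d=10,\ k=7,\ j=7$ one gets $|\lambda_4|\geq|\lambda_8|$, the reverse.) The inequality $\ell_{\omega_k+\omega_{d-k}}(\rho(h))\geq\ell_{\alpha_k}(\rho(h))$ is nonetheless true for all $1\leq k\leq d-1$; the cleanest pairing is
\begin{align*}
\left|\frac{\lambda_1\cdots\lambda_k}{\lambda_{d-k+1}\cdots\lambda_d}\right|
=\prod_{i=1}^{k}\frac{|\lambda_i|}{|\lambda_{d-k+i}|},
\end{align*}
in which every factor is at least $1$ because $i<d-k+i$, while the factor at $i=k$ equals $|\lambda_k|/|\lambda_d|\geq|\lambda_k|/|\lambda_{k+1}|$ since $d\geq k+1$. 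With this fix the argument is complete and coincides with the paper's.
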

After this work was completed we got to know that Nicolas Tholozan independently obtained Corollary \ref{cor.weight collar lemma} with different techniques \cite{TholPreprint}.

While it might not seem very natural at first sight to compare two different length functions for the collar lemma, we have good reasons to do so: on the one hand the collar lemma in Theorem \ref{thm.INTROcollar lemma} is stronger than the one in Corollary \ref{cor.weight collar lemma}.
On the other hand we prove that a 'strong' collar lemma, relating $\ell_{\alpha_k}(\rho(h))$ to $\ell_{\alpha_k}(\rho(g))$ for a linked pair $g,h\in\G$ cannot, in general, hold.  We construct sequences of positive representations $\rho_n:\G_{1,1}\to\PSL(3,\R)$ from the fundamental group of the once punctured torus for which the stronger statement fails:
 \begin{thm}\label{thm.INTROcounterexample}
There is a one parameter family of positive representations $\rho_x:\G_{1,1}\to \PSL(3,\R)$, for $x\in(0,\infty)$, and $\g$, $\d\in\G_{1,1}$ such that 
$$\ell_{\alpha_1}(\rho_x(\g))=\ell_{\alpha_1}(\rho_x(\delta))\to 0$$
as $x$ goes to zero.
\end{thm}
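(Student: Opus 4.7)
The plan is to parametrize positive representations $\rho:\G_{1,1}=\langle a,b\rangle\to\PSL(3,\R)$ via the Fock--Goncharov $\mathcal{X}$-coordinates associated to an ideal triangulation of $S_{1,1}$ consisting of two ideal triangles glued along three edges. These yield eight positive real coordinates (two shear invariants per edge and one triangle invariant per triangle) together with explicit matrix formulas expressing $\rho(a),\rho(b)$ as products of elementary Lusztig-positive matrices, and the positive locus corresponds exactly to the positive representations. The pair $(a,b)$ is automatically linked since the corresponding simple closed curves intersect once on $S_{1,1}$.

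I choose the triangulation invariant under a diffeomorphism $\sigma$ of $S_{1,1}$ whose induced outer automorphism conjugates $a$ to a conjugate of $b$: concretely, realise $S_{1,1}$ as the square torus with the diagonal as the third ideal arc, and let $\sigma$ be the $\pi/2$ rotation, which permutes the three arcs and swaps $a$ with $b$. The induced $\sigma_\ast$ acts by a combinatorial permutation on the Fock--Goncharov coordinates, and restricting to its fixed-point locus yields a subfamily of positive representations along which $\rho(a)$ and $\rho(b)$ are conjugate; in particular $\ell_{\alpha_1}(\rho(a))=\ell_{\alpha_1}(\rho(b))$ identically on this slice. Within this symmetric slice I then select a one parameter path $x\mapsto\rho_x$, $x\in(0,\infty)$, by varying a single $\sigma_\ast$-orbit of coordinates while fixing the others. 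A natural choice is to send one of the triangle invariants to the boundary of the positive range, i.e.\ to $0$; geometrically the internal flag of that triangle degenerates onto one of its edge flags.

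To confirm the desired behaviour I examine the characteristic polynomial $t^3-P(x)t^2+Q(x)t-1$ of $\rho_x(a)$, where $P,Q$ are explicit Laurent polynomials in the Fock--Goncharov variables obtained from the matrix product formula. The goal is that as $x\to 0$ the discriminant between the top two roots tends to zero while the third root stays bounded away; since $\lambda_1\lambda_2\lambda_3=1$, this amounts to $\lambda_1,\lambda_2\to L>1$ and $\lambda_3\to L^{-2}$, giving $\ell_{\alpha_1}(\rho_x(a))\to 0$ whereas $\ell_{\omega_1+\omega_2}(\rho_x(a))\to 3\log L$ stays bounded away from zero, consistent with Corollary \ref{cor.weight collar lemma}. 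By the $\sigma_\ast$-symmetry the same limit holds for $\rho_x(b)$. The main obstacle is precisely this discriminant analysis: one must check that the chosen degeneration produces coalescence of the top two eigenvalues rather than of the bottom two, and that the remaining Fock--Goncharov coordinates can be held strictly positive so that $\rho_x$ stays positive along the whole family. The first point reduces to a sign analysis of the resultant of the explicit cubic with its derivative near $x=0$, and the second is automatic since only one coordinate is being varied.
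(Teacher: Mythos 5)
Your high-level plan (Fock--Goncharov parametrization, degenerate a triangle invariant, analyse the resulting cubic characteristic polynomial) is the same one the paper follows, but there are two concrete problems with the proposal as written.

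First, the symmetry you invoke does not exist. A rotation of the square torus by $\pi/2$ has order four and sends the diagonal arc of the ideal triangulation to the \emph{anti}-diagonal, which is not one of the three arcs; so it does not preserve the triangulation and does not ``permute the three arcs,'' and hence does not act on the Fock--Goncharov chart you fixed. The symmetry one would actually want here is the reflection $(x,y)\mapsto(y,x)$, which does preserve the diagonal and swaps the two triangles (and is orientation-reversing, which matters for how it acts on the triangle invariants). The paper sidesteps this entirely: it simply sets all shears to $(0,0)$ and takes the two triangle invariants to be $x$ and $x^{-1}$, then computes the matrices $\rho_x(\g)$ and $\rho_x(\delta)$ explicitly and observes directly that they share the characteristic polynomial $\chi(\lambda)=\lambda^3-\lambda^2(4x^{-1/3}+4x^{2/3})+\lambda(4x^{1/3}+4x^{-2/3})-1$. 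No appeal to a mapping-class symmetry is needed, so the issue you would run into with the triangulation never arises.

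Second, and more importantly, the step you flag as ``the main obstacle'' --- showing that the degeneration forces the top two eigenvalues to coalesce rather than the bottom two --- is exactly the content of the theorem and is left undone. It is not a routine sign check of a resultant: one genuinely has to control all three roots simultaneously. The paper does this by substituting $y=x^{-1/3}$, reading off $\lambda_1+\lambda_2+\lambda_3=4(y+y^{-2})$, $\lambda_1\lambda_2+\lambda_1\lambda_3+\lambda_2\lambda_3=4(y^2+y^{-1})$, $\lambda_1\lambda_2\lambda_3=1$, and then arguing via $\liminf$/$\limsup$ that $\lambda_1/y\to 2$ and $\lambda_2/y\to 2$, hence $\lambda_1/\lambda_2\to 1$ while $\lambda_3\to 0$. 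Also note that in the paper's degeneration the two triangle invariants go to $0$ and $\infty$ \emph{simultaneously}; sending only one to $0$ while freezing the other (which is what ``varying a single $\sigma_*$-orbit'' would give under the correct reflection symmetry only if that symmetry inverts the invariant) changes the cubic and there is no a priori reason the coalescence pattern is the same. So the proposal is a reasonable sketch, but it needs the symmetry corrected (or, better, dropped in favour of the explicit matrix computation) and it needs the eigenvalue asymptotics actually proved.
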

This ensures the existence of a sequence of Hitchin representations from $\pi_1(S_2)$ with the same properties.

\subsection*{Sketch of the proof}
The proof of the collar lemma is based on the comparison between two cross ratios which can be associated to the boundary map, a projective cross ratio that computes the eigenvalue gap $\lambda_{k}/\lambda_{k+1}(\rho(h))$, and a Grassmannian cross ratio which computes the left hand side  in the expression of Theorem \ref{thm.INTROcollar lemma}. Using property $H_k$, the standard transformation laws of the projective cross ratio lets us obtain an upper bound on the right hand side (this step follows the same lines as \cite{LZ}). Then the connection between the two cross ratios yields an upper bound in terms of a Grassmannian cross ratio, involving, as one of its four entries, the space $\left(\xi_{d-k+1}(h_-)\cap \xi_{k}(g_+)\right)\oplus \xi_{d-k-1}(h_-) $. The bulk of the proof consists in showing that replacing this last subspace with $\xi_{d-k}(g_+)$ only increases the cross ratio. This latter step is obtained by considering a natural Lipschitz path interpolating between the two $k$-dimensional subspaces. Since the representation has properties $H_{d-k+1}$ and $H_{d-k-1}$ such path is a monotone curve in a $C^1$-surface inside the $(d-k)-$Grassmannian, and the proof reduces to studying the horizontal and vertical derivatives. That's where the properties $C_k$ and $C_{k-1}$, as well as the additional $H_j$ properties come into play.

 \subsection*{Outline of the paper} 
In Section \ref{s.prel} we set few standing assumptions and recall basic facts about Anosov representations that will be needed in the paper. In Section \ref{s.cr} we introduce the two cross ratios that will play an important role in the paper, and find useful ways to relate them. Section \ref{s.hyp} is devoted to the study of the partial hyperconvexity properties, property $H_k$ and $C_k$. Here is where their basic properties are proven: the conditions are open and closed among irreducible, have important implications on projections. We also discuss validity of these properties for $\Theta$-positive representations and Fuchsian loci. In Section \ref{sec:posrat} we discuss positively ratioed representations, and prove Theorem \ref{thm.INTROhyperconvex implies positively ratioed}. In Section \ref{s.collar} we prove the collar lemma, Theorem \ref{thm.INTROcollar lemma}, and in Section \ref{sec.strong collar} we construct the counterexample of Theorem \ref{thm.INTROcounterexample}.
\addtocontents{toc}{\protect\setcounter{tocdepth}{2}}

\section{Preliminaries}\label{s.prel}

We begin with some conventions and notations that we keep for the rest of this paper.

\begin{notation} In the ongoing we have
\begin{itemize}
\item $E$ will always be a real vector space of dimension $d$
\item $\G$ always a surface group, i.e. $\G=\pi_1(S_g)$ for $S_g$ a closed surface of genus at least $g\geq 2$.
\end{itemize}
\end{notation}

Since $\G$ is a surface group, the Gromov boundary $\dg$ is homeomorphic to a circle.\footnote{Actually all our results equally well work for hyperbolic groups with circle boundary, i.e. virtual surface groups by \cite{Gabai}; one would only need to replace 'non-trivial element of $\G$' with 'infinite order element of $\G$'.} This induces an order on the boundary:

\begin{defn} We call a tuple $(x_1,\ldots,x_n)\in \dg^n$ of distinct points with $n\geq 4$ \emph{cyclically ordered} or \emph{in that cyclic order} if the points are in positive order on $\dg\simeq S^1$ for one of the two orientations.
\end{defn}

 \begin{figure}[h]
\begin{tikzpicture}[scale=.8]
\draw (0,0) circle [radius =1];
\node at (1,0) [right] {$x_4$};
\node at (-1,0) [left] {$x_1$};
\node at (0,1) [above] {$x_2$};
\node at (0,-1) [below] {$x_5$};
\node at (.8,.8)[right]{$x_3$};
\filldraw (.7,.7) circle [radius=1pt];
\filldraw (1,0) circle [radius=1pt];
\filldraw (-1,0) circle [radius=1pt];
\filldraw (0,1) circle [radius=1pt];
\filldraw (0,-1) circle [radius=1pt];
\end{tikzpicture}
\caption{Cyclically ordered $5-$tuples $(x_1,\ldots,x_5), (x_5,\ldots,x_1)$}
\end{figure}
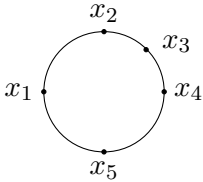

Note that such a tuple will never be in positive order for both orientations. Moreover with our convention every cyclic shift of a cyclically ordered tuple is still cyclically ordered, and the same holds if the order of the tuple is reversed.
As in \cite{MZ} we use the following notation for intervals:
\begin{notation}\label{n.interval}
Given pairwise distinct points $x,y,z\in\dg$ we denote by $(x,y)_z$ the connected component of $\dg\backslash \{x,y\}$ that does \emph{not} contain $z$.
\end{notation}

Throughout the paper we will be concerned with various subspaces $V<E$; it will  often be convenient to record the  dimension of such subspaces with an exponent, so that the notation $V^v$ will imply that $\dim(V)=v$. If $V^v,W^w ,U^u$ are subspaces of  $E$ we will write $V^v\oplus W^w=U^u$ to mean that the sum of $V^v$ and $W^w$ as subspaces of $U^u$ is direct and spans the subspace $U^u$, thus in particular $u=v+w$.

We denote by $\Gr_k(E)$ the Grassmannian of $k$-planes in $E$. Given $V\in\Gr_k(E)$ and $W\in\Gr_{d-k}(E)$, we write $V\tv W$ if $V$ and $W$ are \emph{transverse}, i.e. $V\oplus W=E$, and $V\ntv W$ if they are not transverse.

\begin{notation}\label{n.1}
When two nested subspaces $X^i<X^j$ are fixed, with quotient  $X:=\quotient{X^j}{ X^i}$, we will denote by  $[\cdot]_X$ the projection that associates to a subspace $V^l\in \Gr_l(X^j)\setminus \Gr_l(X^i)$ the subspace of $X$ of dimension  $l':=l-\dim (V^l\cap X^i)$ given by 
$$[V^l]_X:= \quotient{(V^l+X^i)}{ X^i}\in \Gr_{l'}(X).$$
%{\color{blue}If $X$ will be clear out of context, we sometimes drop it from the notation, and just write $[\cdot]$.}
\end{notation}

%\begin{notation}\label{n1}
%Let $X^{i}\in \Gr_i(E), X^{j}\in \Gr_j(E)$ for $i<j$ such that $X^{i}<X^j$. Set $X:=X^j\slash X^i$. For $V^l\in \Gr_l(E)$ such that $V^l<X^j$ and $V^l$ is not contained in $X^{i}$, let $l'=l-\dim (V^l\cap X^i)$. Then we define \emph{the natural projection} $[\cdot]_X$ of $V^l$ by
%$$[V^l]_X:= (V^l+X^i)\slash X^i\in \Gr_{l'}(X).$$
%If $X$ is clear out of context we sometimes just write $[\cdot]$.
%\end{notation}
Given any representation $\rho:\G\to \SL(E)$, we denote by   $\r^{\dual}:\G\to \SL(E^{*})$  the \emph{dual (or contragradient) representation}; this is defined by the relation 
$$(\r^{\dual}(g)(w^{*}) )(v)= w^{*} (\rho(g^{-1})v)$$ for all $g\in\G,w^{*}\in E^{*}$ and $v\in E$.

\subsection{Anosov representations}
Anosov representations were introduced by Labourie for fundamental groups of negatively curved manifolds \cite{Labourie-IM} and generalized by Guichard-Wienhard to hyperbolic groups \cite{Guichard-Wienhard-IM}. Those representations yield generalizations of Teichm\"uller theory and convex cocompactness from rank one to higher rank. We will now recall the basic definitions recast in the framework of \cite{BPS}, which will be useful in the proof of Proposition \ref{prop.INTRO.projectionhyperconvex}

Given $A\in \SL(E)$ we denote by $|\lambda_1(A)|\geq \ldots \geq |\lambda_d(A)|$ the \emph{(generalized) eigenvalues} of $A$  counted with multiplicity and ordered non-increasingly in modulus. We will furthermore fix once and for all a scalar product on $E$ and denote by $\sigma_1(A)\geq \ldots \geq \sigma_d(A)$ the \emph{singular values} of the matrix $A$. That means that $\sigma_i(A)^2$ are the eigenvalues of the symmetric matrix $A^tA$.  

We fix a word metric on the Cayley graph of $\G$ for a fixed finite generating set of $\G$ and denote this by $|\cdot|_{\G}$. 

We will use the definition of Anosov representations from \cite{BPS}, which was shown to be equivalent to Labourie's and Guichard-Wienhard's original definition with different methods in \cite{KLP,BPS}:
\begin{defn}
An homomorphism $\rho:\G\to\SL(E)$ is $k$-Anosov if there exist positive constants $c,\mu$ such that, for all $\g\in\G$ 
$$\frac{\sigma_{k}}{\sigma_{k+1}}(\rho(\g))\geq ce^{\mu |\gamma|_\G}.$$
\end{defn}
The following properties follow easily from the definition: 
\begin{remark}\label{rem ansov rep properties} Let $\rho:\G\to\SL(E)$ be $k$-Anosov
\begin{enumerate}
\item The representation $\rho$ is faithful and has discrete image. Its orbit map is a quasi isometric embedding.
\item The representation $\rho$ is also $(d-k)-$Anosov: indeed $\sigma_{d-p}(\rho(\g^{-1}))=\sigma_p(\rho(\g))^{-1}$.
\end{enumerate}
\end{remark}
Furthermore it holds
\begin{prop}[\cite{BPS, KLP, Guichard-Wienhard-IM}]
The set of $k-$Anosov representation is open in $\Hom(\G,\SL(E))$.
\end{prop}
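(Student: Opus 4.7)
The plan is to translate the singular-value-gap characterization of $k$-Anosov representations into a manifestly open cone-field criterion that only needs to be checked on a finite set of group elements; openness then follows because strict inclusion between a compact and an open set is stable under small perturbations.

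First I would follow Bochi-Potrie-Sambarino and reformulate the $k$-Anosov condition as a dominated splitting for the linear cocycle obtained by restricting $\rho$ to geodesic rays in the Cayley graph of $\G$. By a Bochi-Gourmelon type theorem, exponential growth of the singular-value gap $\sigma_k/\sigma_{k+1}$ along such cocycles is equivalent to the existence of a dominated splitting, which is in turn equivalent to the existence of open subsets $\mathcal{C}^u\subset \Gr_k(E)$ and $\mathcal{C}^s\subset \Gr_{d-k}(E)$ with compact closures, together with an integer $N\geq 1$, such that
$$\rho(w)\bigl(\overline{\mathcal{C}^u}\bigr)\subset \mathcal{C}^u \quad\text{and}\quad \rho(w^{-1})\bigl(\overline{\mathcal{C}^s}\bigr)\subset \mathcal{C}^s$$
for every word $w$ of length $N$ in a fixed finite generating set of $\G$. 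The key point is that the infinite-family inequality in the definition collapses to a finite, strict cone-field condition; this uses in an essential way the hyperbolicity of $\G$ to control what happens along arbitrary group elements from what happens on words of bounded length.

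Second, once this equivalence is in hand, openness becomes formal. For a fixed $k$-Anosov $\rho$, pick cone fields $\mathcal{C}^u,\mathcal{C}^s$ and an integer $N$ as above. The strict inclusion of the compact set $\rho(w)(\overline{\mathcal{C}^u})$ into the open set $\mathcal{C}^u$ is an open condition on the matrix $\rho(w)$, and similarly for the stable cone field. Since there are only finitely many words $w$ of length $N$ in the finite generating set, these are finitely many open conditions on the evaluation map $\rho\mapsto (\rho(w))_{|w|=N}$, hence finitely many open conditions on $\rho\in \Hom(\G,\SL(E))$. Any $\rho'$ sufficiently close to $\rho$ satisfies them with the same cone fields, and is therefore $k$-Anosov.

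The main obstacle is the first step: reducing the asymptotic singular-value gap condition on all of $\G$ to a finite cone-field criterion. This is the substantive content of the Bochi-Gourmelon theorem transplanted to the group-cocycle setting in \cite{BPS}. Once that structural result is granted, openness is the elementary observation above; it is also what makes the various equivalent definitions (those via boundary maps, quasi-isometric embeddings, and metric Anosov flows in \cite{Labourie-IM, Guichard-Wienhard-IM, KLP}) interchangeable for this purpose, since structural stability in each of those frameworks yields the same conclusion.
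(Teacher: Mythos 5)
Your overall strategy --- replace the asymptotic singular-value-gap condition by a finite family of strict open conditions on words of bounded length and then use continuity of $\rho\mapsto\rho(w)$ --- is the right spirit, and it matches how openness is established in \cite{BPS} and \cite{KLP}. But the specific cone-field criterion you propose is inconsistent and therefore vacuous. You ask for a single open set $\mathcal{C}^u\subset\Gr_k(E)$ with $\rho(w)(\overline{\mathcal{C}^u})\subset\mathcal{C}^u$ for \emph{every} word $w$ of length $N$. With a symmetric generating set, $w^{-1}$ also has length $N$, so the condition yields both $\rho(w)(\overline{\mathcal{C}^u})\subset\mathcal{C}^u$ and $\rho(w^{-1})(\overline{\mathcal{C}^u})\subset\mathcal{C}^u$; applying $\rho(w^{-1})$ to the first inclusion gives $\overline{\mathcal{C}^u}\subset\rho(w^{-1})(\mathcal{C}^u)\subset\mathcal{C}^u$, which forces $\mathcal{C}^u$ to be clopen, hence all of $\Gr_k(E)$ or empty. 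Concretely, already for a Schottky representation of a free group into $\SL(2,\R)$ there is no projective interval strictly preserved by all generators \emph{and} their inverses.

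The fix is exactly the nontrivial content of the results you are implicitly invoking: the cones must depend on the base point. In the \cite{BPS} framework one works with a dominated splitting of the linear cocycle over the compact space of (parametrized) geodesics of $\G$; openness of $k$-Anosovness is then an instance of the classical robustness of dominated splittings, expressible via a \emph{family} of cones indexed by points of that flow space, strictly nested after a bounded flow time (compactness reduces this to finitely many strict inclusions, hence an open condition on $\rho$). Alternatively the \cite{KLP} route uses a local-to-global principle for Morse quasigeodesics whose local hypothesis is again a finite, strict, hence open condition on elements of bounded word length. Either repair keeps your two-step plan intact, but the first step genuinely cannot be stated with a single globally invariant cone.
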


As already mentioned in the introduction, an important property of Anosov representations is that they admit continuous, transverse, dynamics preserving, equivariant boundary maps. This can be obtained as uniform limits of \emph{Cartan attractors}, as we now recall.

Every element $g\in\SL(E)$ admits a \emph{Cartan decomposition}, namely can be written uniquely as $g=k_g a_g l_g$ where $l_g,k_g\in\SO(E)$ and $a_g$ is diagonal with entries $\sigma_1(g),\ldots,\sigma_d(g)$. The $k$-th Cartan attractor is the subspace
$$U_k(g)=k_g\langle e_1,\ldots, e_k\rangle.$$
In other words $U_k(g)$ is a choice of the  $p$ longest axes of the ellipsoid $g\cdot B_1(0)\subset E$. Here $B_1(0)$ is the unit ball around the origin in $E$.
Observe that if $g$ has a gap of index $k$, i.e. $\sigma_k(g)>\sigma_{k+1}(g)$, then the $k$-th Cartan attractor doesn't depend on the choice of a Cartan decomposition.

Then the following holds:
\begin{prop}[{\cite[Proposition 4.9]{BPS}}]\label{p.bdry}
Let $\rho:\G\to \SL(E)$ be $k$-Anosov. Then for every geodesic ray $(\g_n)_{n\in\N}$ in $\G$ with endpoint $x\in\partial \G$ the limits
$$\xi^k(x):=\lim_{n\to \infty} U_k(\rho(\g_n))\quad \xi^{d-k}(x):=\lim_{n\to \infty} U_{d-k}(\rho(\g_n))$$
exist, do not depend on the ray and define continuous, $\rho$-equivariant, transverse maps $\xi^k:\partial\G\to \Gr_k(E)$,  $\xi^{d-k}:\partial\G\to \Gr_{d-k}(E)$.
\end{prop}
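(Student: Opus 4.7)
The plan is to establish a perturbation estimate for Cartan attractors and apply it telescopically along geodesic rays. First I would prove a stability lemma of the form: there is a constant $C=C(\rho)$ such that for any $\gamma\in\G$ with a $k$-singular gap and any generator $s$,
\begin{align*}
d_{\Gr_k(E)}\bigl(U_k(\rho(\gamma)),U_k(\rho(\gamma s))\bigr)\leq C\cdot\frac{\sigma_{k+1}(\rho(\gamma))}{\sigma_k(\rho(\gamma))}.
\end{align*}
The intuition is that, since $\rho$ maps the finite generating set into a bounded set of matrices, right multiplication by $\rho(s)$ can perturb the top $k$ singular directions of $\rho(\gamma)$ only by an amount inversely proportional to how sharply those directions dominate the remaining ones. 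A clean way to obtain this is via the Pl\"ucker embedding: $U_k(g)$ is the top singular direction of $\wedge^k g$, and the spectral gap of $\wedge^k g$ at the top equals $\sigma_1\cdots\sigma_{k-1}(\sigma_k-\sigma_{k+1})(g)$, which controls the perturbation.

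Given a geodesic ray $(\gamma_n)_n$ in $\G$ with $\gamma_{n+1}=\gamma_n s_{n+1}$, the Anosov inequality $\sigma_k/\sigma_{k+1}(\rho(\gamma_n))\geq ce^{\mu n}$ combined with the stability lemma yields the summable bound
\begin{align*}
d_{\Gr_k(E)}\bigl(U_k(\rho(\gamma_n)),U_k(\rho(\gamma_{n+1}))\bigr)\leq C' e^{-\mu n},
\end{align*}
so $(U_k(\rho(\gamma_n)))_n$ is Cauchy and converges to a limit $\xi^k(x)$. Independence of the choice of ray follows from Morse-type fellow-traveling in the hyperbolic group $\G$: any two rays $(\gamma_n),(\tilde\gamma_n)$ to the same endpoint satisfy $|\gamma_n^{-1}\tilde\gamma_n|_\G\leq R$ uniformly, so iterating the stability lemma along a word of bounded length forces the two sequences of Cartan attractors to share a common limit. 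The map $\xi^{d-k}$ is produced by the identical construction with $d-k$ in place of $k$.

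Equivariance $\rho(g)\xi^k(x)=\xi^k(g\cdot x)$ is immediate from the tautology $\rho(g)U_k(\rho(\gamma_n))=U_k(\rho(g\gamma_n))$ together with the observation that $(g\gamma_n)$ is eventually a geodesic ray to $g\cdot x$. Continuity at $x$ comes from the same telescoping argument, comparing Cartan attractors along rays to $x$ and to nearby boundary points and absorbing the bounded word-length differences via the stability lemma.

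The main obstacle will be transversality of $\xi^k$ and $\xi^{d-k}$ at distinct endpoints. For an infinite-order element $g\in\G$ with attracting/repelling fixed points $g^\pm\in\dg$, a dynamics computation using Cartan attractors along powers of $g$ identifies $\xi^k(g^+)$ with the sum of generalized eigenspaces of $\rho(g)$ for the $k$ largest eigenvalues and $\xi^{d-k}(g^-)$ with the sum for the $d-k$ smallest, so transversality holds at these pairs. Since the set of transverse pairs is open and $\G$-invariant, and hyperbolic fixed point pairs are dense in $\dg^2\setminus\{\text{diag}\}$, the transverse set is open and dense. Upgrading open-dense to everything is the delicate point: one realizes a given pair $x\neq y$ as the endpoints of a bi-infinite geodesic $(\gamma_n)_{n\in\Z}$ and quantifies the angle between $U_k(\rho(\gamma_n))$ and $U_{d-k}(\rho(\gamma_{-n}))$, using the exponential Anosov gap together with the stability lemma to keep this angle bounded below uniformly in $n$, so that $\xi^k(x)\pitchfork\xi^{d-k}(y)$ passes to the limit.
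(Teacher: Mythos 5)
Your overall architecture---a perturbation estimate for Cartan attractors, telescoping along the exponential singular gap, fellow-traveling for ray-independence, and quantifying angles for transversality---is essentially the Bochi--Potrie--Sambarino strategy that the paper cites, so you have the right roadmap. Two specific steps, however, are asserted where they actually need proof.

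The equation $\rho(g)U_k(\rho(\gamma_n))=U_k(\rho(g\gamma_n))$ is \emph{not} a tautology; it is false in general. The Cartan attractor $U_k(A)$ is the span of the top $k$ axes of the ellipsoid $A\cdot B_1(0)$, and applying a non-orthogonal $g$ to an ellipsoid does not carry longest axes to longest axes. (For $d=2$, take $\rho(\gamma_n)=\diag(\sigma,1/\sigma)$ and $\rho(g)$ a lower unipotent shear: then $\rho(g)U_1(\rho(\gamma_n))$ is exactly $\langle(1,1)^T\rangle$, while $U_1(\rho(g)\rho(\gamma_n))$ differs from it by an error of size $\sigma^{-4}$.) What is true is that $d\bigl(\rho(g)U_k(\rho(\gamma_n)),U_k(\rho(g\gamma_n))\bigr)\to 0$, but this is a left-multiplication analogue of your stability lemma and must be established (for instance via Lemma \ref{l.BPSA6}, comparing $\rho(\gamma_n)P$ and $\rho(g)\rho(\gamma_n)P$ for a common auxiliary $P$ with controlled angles). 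Equivariance then follows by passing to the limit, not immediately.

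The transversality argument is the thinnest part. You correctly note that transversality holds at fixed-point pairs (via dynamics) and that the transverse locus is open and $\G$-invariant, but ``open and dense'' does not by itself give ``everything''; the $\G$-action on $\dg^{(2)}$ is cocompact, not minimal, so a closed $\G$-invariant proper subset can exist in principle. Your fix---bound $\measuredangle\bigl(U_k(\rho(\gamma_n)),U_{d-k}(\rho(\gamma_{-n}))\bigr)$ away from zero along a bi-infinite geodesic---is the right target, but ``the exponential Anosov gap together with the stability lemma'' does not produce such a lower bound as stated: the stability lemma controls how $U_k$ drifts along a single ray, not the angle between attractors built from matrices $\rho(\gamma_n)$ and $\rho(\gamma_{-n})$ whose relative word length $2n$ is unbounded. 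One needs an estimate in the style of Lemma \ref{l.BPSA6} applied to the factorization $\rho(\gamma_n)=\rho(\gamma_{-m})\rho(\gamma_{-m}^{-1}\gamma_n)$ and an inductive angle control on compositions along geodesic words (this is exactly the content of the BPS appendix). As written, the sketch is plausible but not yet a proof.

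Finally, the Pl\"ucker sketch for the stability lemma is sound in spirit but hides some work: a naive Davis--Kahan bound against the additive gap $\sigma_1\cdots\sigma_{k-1}(\sigma_k-\sigma_{k+1})$ only yields an $O(1)$ estimate, not $O(\sigma_{k+1}/\sigma_k)$. The sharper bound exploits the special structure of the perturbation $gBB^Tg^T-gg^T=g(BB^T-\Id)g^T$ (whose $(i,j)$ entry decays like $\sigma_i\sigma_j$), which a component-wise eigenvector analysis captures but a crude operator-norm argument does not.
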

The uniformity of the limits in Proposition \ref{p.bdry} can be estimated explicitly  with respect to the distance on the Grassmannians induced by the fixed scalar product. To be more precise,  for $v,w\in E$ we let $\measuredangle (v,w)$ be the angle between the two vectors with respect to the chosen scalar product.  

The sine of the angle gives a distance, that we will denote by $d$, on the projective space $\P(E)$. More generally on every Grassmannian $\Gr_k(E)$ we set for $X,Y\in\Gr_k(E)$
$$d(X,Y):=\max_{v\in X^\times}\min_{w\in Y^\times}\sin\measuredangle(v,w)=\min_{v\in X^\times}\max_{w\in Y^\times}\sin\measuredangle(v,w),$$
where $X^\times=X\setminus\{0\}$, $Y^\times=Y\setminus\{0\}$.
This corresponds to the Hausdorff distance of $\P(X)$ and $\P(Y)$ regarded as subsets of $\P(E)$ with the aforementioned distance. 

Following Bochi-Potrie-Sambarino \cite{BPS} we further define the angle of two subspaces $X,Y<E$ as 
$$\measuredangle(X,Y)=\min_{v\in X^\times}\min_{w\in Y^\times}\measuredangle(v,w) $$
 Observe that in projective space $\sin\measuredangle (X,Y)=d(X,Y)$ while for general Grassmannians the inequality $\sin\measuredangle (X,Y)\leq d(X,Y)$ is, apart from very special cases, strict.

It then holds
\begin{prop}[cfr. {\cite[Lemma 4.7]{BPS}}]\label{p.bdrydist}
Let $\rho:\G\to \SL(E)$ be $k$-Anosov. Then there exist positive constants $C,\mu$ such that, for every geodesic ray  $(\g_n)_{n\in\N}$ starting at the identity with endpoint $x$ it holds
$$d(\xi^k(x),U_k(\rho(\g_n)))\leq Ce^{-\mu n}.$$
\end{prop}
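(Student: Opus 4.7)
The plan is to combine the exponential singular value gap provided by the Anosov property with a quantitative estimate on how $U_k$ varies under right multiplication by a uniformly bounded element, and then sum a geometric series along the geodesic ray to obtain Cauchyness with the advertised exponential rate.

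First I would fix a finite symmetric generating set $S$ of $\G$ and set $M := \max_{s\in S}\max\{\|\rho(s)\|,\|\rho(s)^{-1}\|\}<\infty$. Since $(\g_n)_{n\in\N}$ is a geodesic ray starting at the identity, one can write $\g_{n+1}=\g_n s_{n+1}$ with $s_{n+1}\in S$, and the Anosov hypothesis directly gives
$$\frac{\sigma_{k+1}(\rho(\g_n))}{\sigma_k(\rho(\g_n))}\leq c^{-1}e^{-\mu n}.$$
The key technical ingredient, essentially \cite[Lemma 4.7]{BPS}, is the following variation estimate: there exist $C_0=C_0(M,d)$ and a threshold $\tau$ such that whenever $\sigma_k(g)/\sigma_{k+1}(g)\geq \tau$ and $\|h\|,\|h^{-1}\|\leq M$,
$$d(U_k(gh),\, U_k(g))\leq C_0\,\frac{\sigma_{k+1}(g)}{\sigma_k(g)}.$$
The intuition is that when the gap at index $k$ of $g$ is large, the $k$-dimensional principal subspace of the ellipsoid $g\cdot B_1(0)$ is pinned down with precision proportional to $\sigma_{k+1}(g)/\sigma_k(g)$, and multiplication on the right by a bounded $h$ can rotate this subspace only by the same order. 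Applying this with $g=\rho(\g_n)$ and $h=\rho(s_{n+1})$, for $n\geq n_0$ large enough that the gap threshold is met, yields $d(U_k(\rho(\g_{n+1})),U_k(\rho(\g_n)))\leq C_1 e^{-\mu n}$.

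Finally, the triangle inequality in the Grassmannian metric together with a geometric sum gives, for $n_0\leq n<m$,
$$d(U_k(\rho(\g_n)),\,U_k(\rho(\g_m)))\leq \sum_{i=n}^{m-1}C_1 e^{-\mu i}\leq \frac{C_1}{1-e^{-\mu}}\,e^{-\mu n}.$$
Letting $m\to\infty$ and invoking Proposition \ref{p.bdry} to identify the limit with $\xi^k(x)$ gives the desired estimate for $n\geq n_0$; the finitely many indices $n<n_0$ are absorbed by enlarging the constant $C$, since $d$ takes values in $[0,1]$. The hard part is really the variation estimate for $U_k$ under bounded perturbations, which is the heart of the Bochi--Potrie--Sambarino approach; once that is in hand, the rest is a routine exponential-decay argument.
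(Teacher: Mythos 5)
Your argument is correct, and since the paper itself offers no proof for this proposition (it just cites it as Lemma~4.7 of Bochi--Potrie--Sambarino), what you have done is reconstruct the BPS argument, which is indeed the right one to use here. The strategy---exponential singular value gap from the Anosov definition, a Lipschitz-type variation estimate for $U_k(g)$ under right-multiplication by a uniformly bounded $h$ when $g$ has a large gap at index $k$, then a geometric sum and triangle inequality to get Cauchyness at rate $e^{-\mu n}$, and finally identification of the limit via Proposition~\ref{p.bdry}---is exactly the intended one. One small bibliographic slip: the variation estimate you call ``essentially Lemma~4.7 of BPS'' is not Lemma~4.7 itself (that is the statement being proved); the relevant technical ingredient lives in the appendix of BPS (their Lemmas A.4/A.5, which the paper also draws on for Lemma~\ref{l.BPSA6}). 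But the mathematical content of your variation estimate is stated correctly, and the conclusion that the finitely many indices below the gap threshold $n_0$ can be absorbed into $C$ because $d\le 1$ is sound.
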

Observe that, if $|\lambda_p(\gamma)|>|\lambda_{p+1}(\gamma)|$, then $U_p(\g^n)$ converges to the span of the first $p$ generalized eigenvalues, as a result one gets
\begin{prop}[{\cite{BPS}}]
Let $\r:\G\to \SL(E)$ be \emph{$k$-Anosov}.  Then $\xi^k$ and $\xi^{d-k}$ are \emph{dynamics preserving}, i.e. for every infinite order element $\g\in\G$ with attracting fixed point $\g^{+}\in\dg$ we have that $\xi^k(\g^{+})$ and $\xi^{d-k}(\g^{+})$ are attractive fixed points for the actions of $\r(\g^{+})$ on $\Gr_k(E)$ and $\Gr_{d-k}(E)$, respectively.
\end{prop}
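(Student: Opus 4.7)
The plan is to combine two ingredients already in the excerpt. First, Proposition \ref{p.bdry} identifies $\xi^k(x)$ as the limit $\lim_n U_k(\rho(\eta_n))$ along any geodesic ray $(\eta_n)\to x$. Second, the observation recorded immediately before the proposition states that whenever $|\lambda_k(A)|>|\lambda_{k+1}(A)|$, the Cartan attractors $U_k(A^n)$ converge to the $k$-plane $V_k(A)\in\Gr_k(E)$ spanned by the generalized eigenvectors for the $k$ eigenvalues of largest modulus; this $V_k(A)$ is automatically the unique attracting fixed point of $A$ acting on $\Gr_k(E)$. The only step left is to verify that $\rho(\g)$ has such an eigenvalue gap at index $k$ for every infinite order $\g\in\G$.

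\textbf{Step one: the eigenvalue gap.} Fix an infinite order $\g\in\G$ and apply the $k$-Anosov inequality to the powers $\g^n$:
$$\frac{\sigma_k(\rho(\g^n))}{\sigma_{k+1}(\rho(\g^n))}\geq c\,e^{\mu |\g^n|_\G}.$$
Since the orbit map of an Anosov representation is a quasi-isometric embedding (Remark \ref{rem ansov rep properties}), $|\g^n|_\G$ grows linearly in $n$, say $|\g^n|_\G\geq \nu n$ with $\nu>0$. Taking $n$-th roots and invoking the standard identity $|\lambda_i(A)|=\lim_n \sigma_i(A^n)^{1/n}$ yields a uniform strict gap
$$\frac{|\lambda_k(\rho(\g))|}{|\lambda_{k+1}(\rho(\g))|}\geq e^{\mu\nu}>1,$$
so $V_k(\rho(\g))$ is a well defined $k$-plane and, by the usual Jordan form analysis, the unique attracting fixed point of $\rho(\g)$ on $\Gr_k(E)$.

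\textbf{Step two: matching with $\xi^k(\g^+)$.} The sequence $(\g^n)_{n\geq 0}$ is a quasi-geodesic ray in $\G$ with endpoint $\g^+$, so by the Morse lemma it lies at bounded Hausdorff distance from a genuine geodesic ray $(\eta_n)_{n\geq 0}$ from $e$ to $\g^+$. Applying Proposition \ref{p.bdry} along $(\eta_n)$ gives $U_k(\rho(\eta_n))\to \xi^k(\g^+)$, and by Proposition \ref{p.bdrydist} the convergence is exponential. A standard estimate shows that this exponential convergence absorbs the bounded word-length perturbation obtained by replacing $(\eta_n)$ by $(\g^n)$, hence $U_k(\rho(\g^n))\to \xi^k(\g^+)$ as well. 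Combined with the linear algebra observation from the excerpt applied to $A=\rho(\g)$ this gives $\xi^k(\g^+)=V_k(\rho(\g))$, which is the desired attracting fixed point.

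The dual statement, that $\xi^{d-k}(\g^+)$ is the attracting fixed point of $\rho(\g)$ on $\Gr_{d-k}(E)$, follows by running the identical argument at index $d-k$, using that $\rho$ is also $(d-k)$-Anosov (Remark \ref{rem ansov rep properties}). The only genuinely substantive point is Step one, which turns the uniform singular-value gap along powers of a single element into an honest modulus gap between the $k$-th and $(k{+}1)$-st eigenvalues of $\rho(\g)$; everything else is a formal combination of the two propositions already established in the excerpt.
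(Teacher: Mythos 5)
The paper does not actually prove this proposition: it is cited from \cite{BPS}, and the only indication of the argument is the single preceding sentence observing that $U_p(\g^n)$ converges to the span of the top $p$ generalized eigenspaces when there is a modulus gap. Your proof fleshes out exactly this hint and has the right structure — establish the eigenvalue gap for each infinite-order $\g$, identify $\lim_n U_k(\rho(\g^n))$ with the $k$-plane $V_k(\rho(\g))$ of dominant generalized eigenspaces, and match it with $\xi^k(\g^+)$ from Proposition \ref{p.bdry}. Two details deserve attention.

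First, in Step one you justify the linear growth $|\g^n|_\G\geq\nu n$ by appealing to the orbit map of an Anosov representation being a quasi-isometric embedding. That appeal is at best circular: the QI property of the orbit map converts growth of $|\g^n|_\G$ into growth of $d_X(\rho(\g^n)o,o)$ and vice versa, but to know that the latter grows linearly you would already need to know $\rho(\g)$ is loxodromic (i.e.\ has the very eigenvalue gap you are trying to prove), and a priori $\rho(\g)$ could be parabolic. What you actually want is the purely group-theoretic fact that infinite-order cyclic subgroups of a hyperbolic group (in particular of $\G=\pi_1(S_g)$) are undistorted, i.e.\ $\lim_n |\g^n|_\G/n>0$. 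With that input your estimate $|\lambda_k/\lambda_{k+1}|(\rho(\g))\geq e^{\mu\nu}>1$ via Yamamoto's $|\lambda_i(A)|=\lim_n\sigma_i(A^n)^{1/n}$ is fine.

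Second, Step two is correct in spirit but currently asserts that ``a standard estimate shows that exponential convergence absorbs the bounded word-length perturbation'' when passing from a genuine geodesic ray $(\eta_n)$ to the quasi-geodesic $(\g^n)$. Proposition \ref{p.bdry} and Proposition \ref{p.bdrydist} as quoted only control $U_k$ along geodesic rays, so this step needs a precise lemma. You already have the right tool in the excerpt: writing $\g^n=\eta_{m(n)}g_n$ with $g_n$ ranging over a fixed finite ball, Lemma \ref{l.BPSA6} controls $d\bigl(U_k(\rho(\g^n)),\rho(\eta_{m(n)})P\bigr)$ uniformly for $P=U_k(\rho(g_n))$ lying in a precompact set transverse to $U_{d-k}(\rho(\eta_{m(n)})^{-1})$ (transversality holds uniformly for $n$ large by Proposition \ref{p.bdrydist}). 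Spelling that out would turn the hand-wave into a proof. Alternatively one can avoid the perturbation estimate entirely by a dynamical argument: for $z\in\dg\setminus\{\g^-\}$, equivariance and north--south dynamics give $\rho(\g^n)\xi^k(z)=\xi^k(\g^nz)\to\xi^k(\g^+)$, while the eigenvalue gap and transversality of $\xi^k(z)$ to the repelling subspace give $\rho(\g^n)\xi^k(z)\to V_k(\rho(\g))$, so the two limits coincide.
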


\begin{notation}
Following the notation introduced in \cite{PSW}, we will often write $x_{\r}^k$ instead of $\xi^k(x)$ for the boundary map $\xi^k$ associated to  a $k$-Anosov representation $\rho$. If the representation is clear out of context, we will sometimes just write $x^k$.

Similarly we will write $g_{\r}$ instead of $\rho(g)$ for any $g\in \G$.
\end{notation}

We conclude the section by proving Proposition \ref{prop:continuity}, that provides the general setup useful to prove Proposition \ref{prop.INTRO.projectionhyperconvex}. For this we need another Lemma from \cite{BPS}:
\begin{lem}[{\cite[Lemma A.6]{BPS}}]\label{l.BPSA6}
Assume that $g\in\SL(E)$ has a gap of index $k$. Then for every $P\in\Gr_k(E)$ transverse to $U_{d-k}(g^{-1})$ it holds
$$d(gP,U_k(g))\leq \frac{\sigma_{k+1}}{\sigma_{k}}(g)\frac 1{\sin \measuredangle(P,U_{d-k}(g^{-1}))}.$$
\end{lem}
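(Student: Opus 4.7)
The plan is to reduce to the diagonal case by the Cartan decomposition of $g$, and then bound everything in terms of operator norms of a linear map whose graph is $Q:=LP$.

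Write $g = KAL$ with $K,L \in \SO(E)$ and $A = \diag(\sigma_1(g),\ldots,\sigma_d(g))$; set $E_k = \langle e_1,\ldots,e_k\rangle$ and $F = \langle e_{k+1},\ldots,e_d\rangle$. A short computation with the Cartan decomposition of $g^{-1}$ shows that $U_k(g) = K E_k$ and $U_{d-k}(g^{-1}) = L^{-1} F$. Because the standard distance $d$ on $\Gr_k(E)$ and the angle $\measuredangle$ are $\SO(E)$-invariant, setting $Q := LP$ the inequality to prove becomes
$$d(AQ, E_k) \leq \frac{\sigma_{k+1}}{\sigma_k}(g)\cdot \frac{1}{\sin\measuredangle(Q,F)},$$
where $Q \in \Gr_k(E)$ is transverse to $F$.

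Transversality of $Q$ and $F$ means that the projection of $Q$ onto $E_k$ parallel to $F$ is an isomorphism, so $Q$ is the graph $\{(v,Tv) : v \in E_k\}$ of a unique linear map $T:E_k \to F$. Writing $A = A_1 \oplus A_2$ where $A_1 = \diag(\sigma_1,\ldots,\sigma_k)$ acts on $E_k$ and $A_2 = \diag(\sigma_{k+1},\ldots,\sigma_d)$ acts on $F$, the change of variable $w = A_1 v$ identifies $AQ$ with the graph of $T' := A_2 T A_1^{-1}$, whose operator norm satisfies
$$\|T'\| \leq \|A_2\|\cdot \|T\| \cdot \|A_1^{-1}\| = \frac{\sigma_{k+1}}{\sigma_k}(g)\,\|T\|.$$

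The bulk of the remaining work is the elementary but crucial computation of $d(\mathrm{graph}(S),E_k)$ and $\sin\measuredangle(\mathrm{graph}(S),F)$ for an arbitrary $S:E_k\to F$: parametrizing unit vectors of the graph as $(v,Sv)/\sqrt{\|v\|^2+\|Sv\|^2}$, one finds
$$d(\mathrm{graph}(S), E_k)=\frac{\|S\|}{\sqrt{1+\|S\|^2}}\leq \|S\|,\qquad \sin\measuredangle(\mathrm{graph}(S),F)=\frac{1}{\sqrt{1+\|S\|^2}}.$$
Applied with $S = T'$ this gives $d(AQ,E_k)\leq \|T'\|$, while applied with $S=T$ it gives $\|T\| \leq 1/\sin\measuredangle(Q,F)$. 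Chaining the three estimates through the operator norm bound on $T'$ yields exactly the desired inequality, and the reduction above transports it back to the statement for $P$ and $g$.

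The main obstacle is keeping track of the correct identification of $U_{d-k}(g^{-1})$ (which requires reordering the singular values of $A^{-1}$ via a permutation of basis vectors); once the reduction to the diagonal model is clean, the rest is a calculation with the graph parametrization and submultiplicativity of the operator norm.
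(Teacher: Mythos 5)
Your proof is correct, and it follows essentially the same route as the proof of \cite[Lemma A.6]{BPS}, which the present paper cites rather than reproving: reduce to the diagonal model via the Cartan decomposition $g=KAL$, use $\SO(E)$-invariance of $d$ and $\measuredangle$, represent $Q=LP$ as $\mathrm{graph}(T)$ for $T:E_k\to F$, observe $AQ=\mathrm{graph}(A_2TA_1^{-1})$, and combine the elementary formulas $d(\mathrm{graph}(S),E_k)=\|S\|/\sqrt{1+\|S\|^2}$ and $\sin\measuredangle(\mathrm{graph}(S),F)=1/\sqrt{1+\|S\|^2}$ with submultiplicativity of the operator norm. The verifications check out, including the identification $U_{d-k}(g^{-1})=L^{-1}F$ (the semiaxis lengths of $A^{-1}B_1(0)$ are $\sigma_1^{-1}\leq\cdots\leq\sigma_d^{-1}$, so the $d-k$ longest lie in $F$), and the chain $d(AQ,E_k)\leq\|T'\|\leq\frac{\sigma_{k+1}}{\sigma_k}\|T\|\leq\frac{\sigma_{k+1}}{\sigma_k}\cdot\frac{1}{\sin\measuredangle(Q,F)}$ gives exactly the claimed bound.
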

The last ingredient we will need in the proof of Proposition \ref{prop:continuity} is the concept of $\nu$-separated triples: we fix a distance $d$ on $\partial\G$ inducing the topology and say that an $n$-tuple $(x_1,\ldots,x_n)\in \partial \G$ is \emph{$\nu$-separated} if  $d(x_i,x_k)>\nu$ for all $i\neq k$. It follows from the properties of the convergence action of $\G$ on $\partial \G$ that there exists $\nu_0$ such that for every pairwise distinct triple $(a,b,c)\in\G$ there exists $g\in\G$ such that  $(ga,gb,gc)$ is $\nu_0$-separated. We will further assume, up to possibly shrinking $\nu_0$, that the endpoints of every biinfinite geodesic in $\Gamma$ through the origin are $\nu_0$-separated.

We now have all the tools we need to prove the only original result in the section. This is a generalization of the main argument in \cite[Proposition 6.7]{PSW}. We denote by $\dg^{(2)}$ the set of distinct pairs in the boundary of $\G$:
\begin{prop}\label{prop:continuity}
Let $\rho:\G\to\SL(E)$ be $k$-Anosov and $F:\dg^{(2)}\to \Gr_k(E)$ be continuous, $\rho$-equivariant. Assume that, for every pairwise distinct triple $x,y,z\in\partial\G$, 
$$ F(x,y)\tv z^{d-k}.$$ 
Then 
$$\lim_{y\to x} F(x,y)=x^k$$
\end{prop}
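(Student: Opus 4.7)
The idea is to use $\rho$-equivariance of $F$ together with $\Gamma$'s cocompact action on distinct triples of $\partial\Gamma$ to push the problem into a situation where Lemma \ref{l.BPSA6} gives quantitative control. By a subsequence argument, it suffices, given any $y_n\to x$ with $y_n\neq x$, to extract a subsequence along which $F(x,y_n)\to x^k$.

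First I would fix a third point $z\in\partial\Gamma\setminus\{x\}$ (with $z\neq y_n$ eventually), and use the defining property of $\nu_0$ to choose $g_n\in\Gamma$ such that $(g_nx,g_ny_n,g_nz)$ is $\nu_0$-separated. Up to extracting a subsequence, these triples converge to pairwise distinct limits $(a,b,c)$. The separation forces $|g_n|_\Gamma\to\infty$, since otherwise $g_nx$ and $g_ny_n$ would have to become arbitrarily close. The convergence group property of the $\Gamma$-action on $\partial\Gamma$ then provides, up to further subsequence, points $p_\pm\in\partial\Gamma$ such that $g_n\to p_+$ uniformly on compact subsets of $\partial\Gamma\setminus\{p_-\}$, and $g_n^{\pm 1}\cdot o\to p_\pm$ in the Cayley graph. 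A key observation is that $p_-=x$: otherwise both $g_nx$ and $g_ny_n$ would converge to $p_+$ (using $y_n\to x\neq p_-$), contradicting $a\neq b$. Consequently $c=p_+$, since $z\neq p_-$.

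With this setup I apply Lemma \ref{l.BPSA6} to $g=\rho(g_n)^{-1}$ and $P_n:=F(g_nx,g_ny_n)=\rho(g_n)F(x,y_n)$. The lemma bounds $d(F(x,y_n),U_k(\rho(g_n)^{-1}))$ by the product of $\sigma_{d-k+1}/\sigma_{d-k}(\rho(g_n))$, which tends to $0$ since $\rho$ is $(d-k)$-Anosov (Remark \ref{rem ansov rep properties}) and $|g_n|_\Gamma\to\infty$, and of $1/\sin\measuredangle(P_n,U_{d-k}(\rho(g_n)))$. Continuity of $F$ gives $P_n\to F(a,b)$, and Proposition \ref{p.bdry} applied to $g_n\cdot o\to p_+$ gives $U_{d-k}(\rho(g_n))\to p_+^{d-k}$. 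The transversality hypothesis, applied to the pairwise distinct triple $(a,b,p_+)$, yields $F(a,b)\tv p_+^{d-k}$, so the angle stays bounded below. Since moreover $U_k(\rho(g_n)^{-1})\to x^k$ by Proposition \ref{p.bdry} applied along a geodesic ray to $p_-=x$ passing within bounded distance of $g_n^{-1}\cdot o$, one concludes $F(x,y_n)\to x^k$ along the subsequence.

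The main subtlety is pinning down $p_-=x$: it is exactly here that $y_n\to x$ and the $\nu_0$-separation combine to produce a third point $p_+=c$, distinct from $a$ and $b$, which is what allows the transversality hypothesis on $F$ to feed into Lemma \ref{l.BPSA6}. A minor technical issue is the convergence $U_k(\rho(g_n^{-1}))\to\xi^k(x)$ when $g_n^{-1}\cdot o$ only lies at bounded Cayley-graph distance from a ray to $x$, which I would handle by comparing Cartan attractors of elements at bounded distance using the uniform estimate of Proposition \ref{p.bdrydist}.
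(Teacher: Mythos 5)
Your proof is correct and reaches the conclusion, but it takes a genuinely different route from the paper's. The paper's argument picks a biinfinite geodesic $(\gamma_i)_{i\in\Z}$ through the origin with forward endpoint $x$ and backward endpoint $z$, translates the triple $(x,y,z)$ by $\gamma_{n_y}^{-1}$ to a $\nu_0$-separated position, and applies Lemma~\ref{l.BPSA6} with $g=\rho(\gamma_{n_y})$; because $(\gamma_n)$ is a geodesic ray, Proposition~\ref{p.bdrydist} applies verbatim at both ends, so one gets a clean, explicitly exponential bound $d(F(x,y),\xi^k(x))\lesssim e^{-\mu n_y}$ with no passage to subsequences. You instead translate by arbitrary $g_n$ produced from the cocompactness on triples, invoke the convergence-group property to find $p_\pm$, pin down $p_-=x$ dynamically, and conclude by a compactness/subsequence argument; this yields qualitative convergence rather than a rate. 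Both arguments hinge on exactly the same two ingredients — $\nu_0$-separation of triples and the BPS angle lemma — so the mechanism is the same, but your version pays for its greater abstraction with the technical point you flag at the end: you need $U_k(\rho(g_n^{-1}))\to\xi^k(x)$ and also (which you do not flag) $U_{d-k}(\rho(g_n))\to\xi^{d-k}(p_+)$, neither of which follows \emph{directly} from Proposition~\ref{p.bdry} because your $g_n^{\pm 1}\cdot o$ are only asymptotic to, not on, geodesic rays from $o$. Since $g_n\cdot o\to p_+\neq p_-=x$ and $g_n^{-1}\cdot o\to x$, these orbit points do stay within bounded distance of geodesic rays through $o$, so one can patch this with a Lipschitz estimate for Cartan attractors of nearby group elements (as you indicate), but this is an extra argument. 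The paper's choice to translate along a fixed geodesic through the origin is precisely what makes this issue disappear, which is the main advantage of its approach.
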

\begin{proof}
Observe that, as the set of $\nu_0$-separated triples is precompact, the assumption guarantees that there is $\epsilon$ such that, whenever $(x,y,z)$ is $\nu_0$ separated, it holds
$$\sin\measuredangle(F(x,y), z^{d-k})>\epsilon.$$

We choose a biinfinite geodesic $(\g_i)_{i\in \Z}$ through the origin with positive endpoint $x$, and denote by $z$ the negative endpoint of $(\g_i)_{i\in \Z}$.  Observe that for every $y$ there is $n=n_y$ such that $(\g_n^{-1}x,\g_n^{-1}y, \g_n^{-1}z)$ is $\nu_0$-separated (see \cite[Lemma 6.8]{PSW}). Furthermore $n_y$ goes to infinity as $y$ converges to $x$.

It follows from Proposition \ref{p.bdrydist} that, since $\rho$ is $(d-k)-$Anosov, and the ray $(\gamma_n^{-1}\g_{n-i})_{i\in\N}$ is a geodesic ray from the origin with endpoint $\gamma_n^{-1}z$, it holds 
$$d(U_{d-k}(\rho(\g_n^{-1})),\rho(\g_n^{-1})z^{d-k})\leq Ce^{-\mu n}.$$
Thus in particular, we can find $N$ such that, for every $y$ such that  $n_y>N$, we have
$$\begin{array}{l}
\sin\measuredangle(F(\g_n^{-1}x,\g_n^{-1}y), U_{d-k}(\rho(\g_n^{-1})))\geq\\
\sin\measuredangle(F(\g_n^{-1}x,\g_n^{-1}y), \rho(\g_n^{-1})z^{d-k})-d(U_{d-k}(\rho(\g_n^{-1})),\rho(\g_n^{-1})z^{d-k})\geq \frac\epsilon 2.
\end{array}$$
Lemma \ref{l.BPSA6} ensures
$$d(F(x,y),U_k(\rho(\g_n)))\leq \frac{\sigma_{k+1}}{\sigma_{k}}(\rho(\g_n))\frac 2{\epsilon}\leq \frac{2e^{-\mu n}}{\epsilon c}. $$
The result then follows from Proposition \ref{p.bdrydist} using the triangle inequality.
\end{proof}

\section{Cross ratios}\label{s.cr}

An important tool will be cross ratios, which we introduce here. We will need two different notions of cross ratio and their relation.

\subsection{Projective cross ratios}
Probably the most classical notion of cross ratio is the \emph{projective cross ratio} $\pc$ on $\R\Pp^1$. This cross ratio can be defined by
\[\pc(x_1,x_2,x_3,x_4):=\frac{\tilde{x}_1\wedge \tilde{x}_3}{\tilde{x}_1\wedge \tilde{x}_2}\frac{\tilde{x}_4\wedge \tilde{x}_2}{\tilde{x}_4\wedge \tilde{x}_3}\] 
if no three of the four $x_i\in \R\Pp^1$ are equal and $\tilde{x}_i\in\R^2\backslash\{0\}$ are non-trivial lifts. We also need to choose an identification $\wedge^2\R^2\simeq \R$, but the definition is independent of all choices made.

\begin{lem}\label{lem.symmetry of projective cro}
Let $x_1,\ldots,x_5\in \R\Pp^1$. Then whenever all quantities are defined we have
\begin{enumerate}
\item $\pc (x_1,x_2,x_3,x_4)^{-1}=\pc (x_4,x_2,x_3,x_1)=\pc (x_1,x_3,x_2,x_4)$
\item $\pc (x_1,x_2,x_3,x_4)\cdot\pc (x_4,x_2,x_3,x_5)=\pc (x_1,x_2,x_3,x_5)$
\item $\pc (x_1,x_2,x_3,x_4)\cdot\pc (x_1,x_3,x_5,x_4)=\pc (x_1,x_2,x_5,x_4)$
\item $\pc (x_1,x_2,x_3,x_4)=0\Longleftrightarrow x_1 = x_3$ or $x_4= x_2$
\item $\pc (x_1,x_2,x_3,x_4)=1\Longleftrightarrow x_1 = x_4$ or $x_2= x_3$
\item $\pc (x_1,x_2,x_3,x_4)=\infty\Longleftrightarrow x_1 = x_2$ or $x_4= x_3$
\item $\pc (x_1,x_2,x_3,x_4)=\pc (g x_1,g x_2,g x_3,g x_4)\quad  \forall g\in\SL(\R^2)$
\item $\pc (x_1,x_2,x_3,x_4)=1-\pc (x_1,x_2,x_4,x_3)$.
\item If $x_1,\ldots,x_5$ are  cyclically ordered, then\label{e.mon}
\begin{align*}
\pc (x_1,x_2,x_3,x_5)&<\pc (x_1,x_2,x_4,x_5)\\
\pc (x_1,x_3,x_4,x_5)&<\pc (x_2,x_3,x_4,x_5).
\end{align*} 
\item $\pc (x_1,x_2,x_3,x_4)>1$ if and only if $(x_1,x_2,x_3,x_4)$ is cyclically ordered.
\end{enumerate}
\end{lem}

All properties are straight forward to check (and well known). %Observe that $(8)$ is very special to the projective cross ratio.

We will later use the following consequence of Lemma \ref{lem.symmetry of projective cro} \eqref{e.mon}:
\begin{lem}\label{lem.non-zero derivative of projective cross ratio}
Let $c:I\subset \R \to \R\Pp^1$ be $C^1$ at $i_0\in I$. If $c(i_0),x_1,x_2\in \R\Pp^1$ are pairwise distinct and  $dc_{|i_0}\neq 0$, then
$$\left.\frac{d}{dt}\right|_{t=i_0} \pc(c(i_0),x_1,x_2,c(t))\neq 0.$$
\end{lem}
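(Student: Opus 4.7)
The plan is to view the cross ratio as a function of its fourth argument only, with the first three fixed, and identify it with a Möbius transformation.

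More precisely, fix $a := c(i_0)$ and consider the map
\[
\phi : \R\Pp^1 \to \R\Pp^1, \qquad \phi(y) = \pc(a, x_1, x_2, y).
\]
Using the defining formula
\[
\pc(a, x_1, x_2, y) = \frac{\tilde a \wedge \tilde x_2}{\tilde a \wedge \tilde x_1} \cdot \frac{\tilde y \wedge \tilde x_1}{\tilde y \wedge \tilde x_2},
\]
one sees that $\phi$ is a Möbius transformation of $\R\Pp^1$: the factor depending on $a$ is a nonzero constant (both $\tilde a \wedge \tilde x_1$ and $\tilde a \wedge \tilde x_2$ are nonzero since $a, x_1, x_2$ are pairwise distinct), and $y \mapsto (\tilde y \wedge \tilde x_1)/(\tilde y \wedge \tilde x_2)$ is a projective automorphism because $x_1 \ne x_2$. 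Equivalently, by properties (4)--(6) of Lemma \ref{lem.symmetry of projective cro}, $\phi$ sends the three distinct points $x_2, a, x_1$ to the three distinct points $0, 1, \infty$ respectively, so $\phi$ is the unique Möbius transformation with these values, in particular a diffeomorphism of $\R\Pp^1$.

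Now apply the chain rule at $t = i_0$:
\[
\left.\frac{d}{dt}\right|_{t=i_0}\pc(c(i_0), x_1, x_2, c(t)) = d\phi_{c(i_0)}\bigl(dc_{|i_0}\bigr).
\]
The linear map $d\phi_{c(i_0)} : T_{c(i_0)} \R\Pp^1 \to T_1 \R\Pp^1$ is invertible since $\phi$ is a diffeomorphism, and $dc_{|i_0} \neq 0$ by hypothesis. Hence the right-hand side is nonzero, which is exactly the desired conclusion.

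There is no real obstacle here: the only thing to verify carefully is that the three pairwise distinctness conditions on $a, x_1, x_2$ guarantee that $\phi$ is a genuine (non-degenerate) Möbius transformation rather than a constant, and this is precisely what the hypothesis provides.
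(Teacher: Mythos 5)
Your proof is correct, and since the paper does not actually supply a proof of this lemma (it is presented as an observation after the list of cross-ratio symmetries), the Möbius-transformation argument you give is exactly the natural, expected justification. Just one small correction: with the paper's convention $\pc(x_1,x_2,x_3,x_4)=\frac{\tilde{x}_1\wedge \tilde{x}_3}{\tilde{x}_1\wedge \tilde{x}_2}\frac{\tilde{x}_4\wedge \tilde{x}_2}{\tilde{x}_4\wedge \tilde{x}_3}$, your map $\phi(y)=\pc(a,x_1,x_2,y)$ sends $x_1\mapsto 0$, $a\mapsto 1$, $x_2\mapsto\infty$, i.e.\ you have $0$ and $\infty$ swapped relative to what you wrote; this is immaterial for the argument since either way $\phi$ maps three distinct points to $\{0,1,\infty\}$, so it is a genuine Möbius transformation and in particular a diffeomorphism. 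With that fixed, the chain-rule step and the conclusion are airtight.
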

The projective cross ratio can be used to define a cross ratio on pencils of vector subspaces:
\begin{defn}\label{def.quotient cross ratio}
Let $V^{k-1}\in \Gr_{k-1}(E),$ $V^{k+1}\in \Gr_{k+1}(E)$ and $W_i^k\in \Gr_k(E)$ for $i=1,\ldots,4$ such that $V^{k-1}<W_i^k<V^{k+1}$.   We set
$$ \pc_{V^{k-1}}\left(W_1^k,W_2^k,W_3^k,W_4^k\right):=\pc\left([W_1^k],[W_2^k],[W_3^k],[W_4^k]\right),$$
where $[W_i^k]\in \Pp(\quotient{V^{k+1}}{ V^{k-1}})\simeq \R\Pp^1$ is the projection.
\end{defn}

We will also allow entries of the form $w\in \Pp(V^{k+1}\backslash V^{k-1})$  in the left hand side, as this defines via $w\oplus V^{k-1}$ a $k-$vector space satisfying the assumption of Definition \ref{def.quotient cross ratio}.

This cross ratio is useful to determine the root gap: 

\begin{prop}[{cfr. \cite[Lem. 2.9]{LZ}}]\label{p.gc1}
Let $\r$ be $\{k-1,k,k+1\}-$Anosov. 
%- in particular $h_{\r}$ has gaps at $k,d-k-1,d-k,d-k+1$ for every $h\in \G\backslash\{e\}$. 
Then for every non-trivial $h\in \G$
$$\frac{\lambda_k(h_{\r})}{\lambda_{k+1}(h_{\r})}=
\pc_{h_-^{d-k-1}}\left(h_-^{d-k},x^k\cap h_-^{d-k+1}, hx^k\cap h_-^{d-k+1}, h_+^k\cap h_-^{d-k+1}\right)$$
for any $x\in \dg\backslash \{h_{\pm}\}$.
\end{prop}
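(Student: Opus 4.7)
The plan is to reduce the right-hand side to the projective cross ratio of four points on $\P(V)\cong\R\Pp^1$, where $V:=h_-^{d-k+1}/h_-^{d-k-1}$ is a $2$-dimensional $\r(h)$-invariant quotient, and then to compute it in an eigenbasis of the induced map $\bar h$ on $V$.

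First I would check that the cross ratio is well-defined. Since $\r$ is $\{k-1,k,k+1\}$-Anosov, hence also $\{d-k-1,d-k,d-k+1\}$-Anosov by Remark \ref{rem ansov rep properties}, transversality of the boundary maps forces $x^k\cap h_-^{d-k+1}$ to be one-dimensional (using $x^k\tv h_-^{d-k}$) and to project to a nonzero line in $V$ (otherwise it would sit in $x^k\cap h_-^{d-k-1}\subseteq x^k\cap h_-^{d-k}=\{0\}$); the same applies to $hx$ since $hx\neq h_\pm$, and the four resulting lines in $\P(V)$ are pairwise distinct.

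Next I would identify $\bar h$ and the four projected points. Since $h_-^m$ is, by dynamics preservation, the span of the generalized eigenspaces of $\r(h)$ for the $m$ eigenvalues of smallest modulus, both $h_-^{d-k+1}$ and $h_-^{d-k-1}$ are $\r(h)$-invariant, so $\bar h$ is well-defined with eigenvalues $\l_k(h_\r)$ and $\l_{k+1}(h_\r)$. Reading off eigenvectors, $[h_-^{d-k}]$ is the $\l_{k+1}$-eigenline of $\bar h$ and $[h_+^k\cap h_-^{d-k+1}]$ is its $\l_k$-eigenline. Equivariance of the boundary maps, combined with $\r(h)$-invariance of $h_-^{d-k+1}$, gives $\r(h)(x^k\cap h_-^{d-k+1})=hx^k\cap h_-^{d-k+1}$; hence, setting $a:=[x^k\cap h_-^{d-k+1}]$, the third entry of the cross ratio is exactly $\bar h\cdot a$.

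Finally I would compute. In an eigenbasis of $V$ with $\bar h=\diag(\l_{k+1},\l_k)$ the four projected points become $[1{:}0],[\alpha{:}\beta],[\l_{k+1}\alpha{:}\l_k\beta],[0{:}1]$ for some $\alpha,\beta\neq 0$, and the four $2\times 2$ wedge determinants in the defining formula for $\pc$ collapse the expression to $\l_k/\l_{k+1}$, as claimed. This also makes manifest the independence of the auxiliary point $x$. There is no serious obstacle: the only care needed is in the eigendirection bookkeeping that matches each entry of the cross ratio with the correct eigenline or with $\bar h\cdot a$, and this is precisely where the three $\{k-1,k,k+1\}$-Anosov hypotheses (rather than just $k$-Anosov) are used, both to make sense of the quotient $V$ and to ensure that the relevant intersections have the expected dimensions.
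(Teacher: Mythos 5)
Your proof matches the paper's almost step for step: pass to the two-dimensional $\r(h)$-invariant quotient $V=h_-^{d-k+1}/h_-^{d-k-1}$, identify $[h_-^{d-k}]$ and $[h_+^k\cap h_-^{d-k+1}]$ with the $\lambda_{k+1}$- and $\lambda_k$-eigenlines of the induced map $\bar h$ (dynamics preservation plus the eigenvalue gaps supplied by the $\{k-1,k,k+1\}$-Anosov hypothesis), observe by equivariance and invariance of $h_-^{d-k+1}$ that the third entry is $\bar h$ applied to the second, and evaluate $\pc$ in an eigenbasis. One small caveat, shared with the paper's own proof: your claim that the four projected lines are \emph{pairwise} distinct is only half-justified — transversality $x^k\tv h_-^{d-k}$ rules out the $\lambda_{k+1}$-eigenline (your $\alpha\neq 0$, the paper's $a\neq 0$), but ruling out the $\lambda_k$-eigenline (your $\beta\neq 0$, the paper's $b\neq 0$) does not follow from the stated Anosov hypotheses alone and is left implicit in both arguments; it does follow once property $H_k$ is in force, which is the only setting in which this proposition is invoked.
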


\begin{center}
\begin{figure}[h]
\begin{tikzpicture}
\draw (0,0) circle [radius =1];
\draw (0,-1) to (0,1.5)[thick, green];
\draw (-1,0) to (1,0);
\draw (0,-1) to (0,1);
%\node at (-.5,0) [above] {$g$};
%\node at (0,-.5) [right] {$h$};
\node at (.5,0)  {$>$};
\node at (0,.5) [rotate=90] {$>$};
\filldraw (1,0) circle [radius=1pt];
\node at (1,0) [right] {$x^k\cap h_-^{d-k+1}$};
\draw (0,-1) to (1.5,0.5)[thick, green];
\filldraw (0,1) circle [radius=1pt];
\node at (0,1) [above] {$h^k_+\cap h_-^{d-k+1}$};
\filldraw (0,-1) circle [radius=1pt];
\node at (0,-1) [below] {$h_-^{d-k-1}$};
\node at (2,-1) [below] {$h_-^{d-k}$};
\draw (-1,-1) to (1.5,-1)[thick, green];
\node at (.8,.8)[right]{$hx^k\cap h_-^{d-k+1}$};
\draw (0,-1) to (1,1.3)[thick, green];
\draw (0,-1) to (0,1)[thick, green];
\filldraw (.7,.7) circle [radius=1pt];
\end{tikzpicture}
\caption{Schematic picture of the statement of Proposition \ref{p.gc1}, all thick green lines are to be understood as subspaces of $h_-^{d-k+1}$; their cross ratio is the $k$-th eigenvalue gap}
\end{figure}
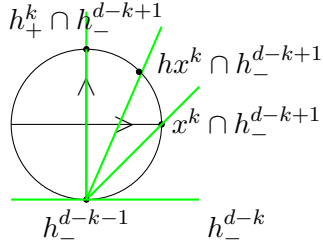
\end{center}

\begin{proof}
Pick a generalized eigenbasis $(e_1,\ldots,e_d)$ of $h_{\r}$ such that $e_i$ corresponds to $\lambda_i(h_{\r})$ with the $\lambda_i$ ordered decreasingly in modulus as usual. Then $\quotient{h_{-}^{d-k+1}}{ h^{d-k-1}_{-}}\simeq \langle e_{k},e_{k+1}\rangle$. Moreover by the Anosov condition $e_{k},e_{k+1}$ are eigenvectors (and not only generalized eigenvectors). Thus if $[\cdot]$ denotes the projection to $\Pp(\quotient{h_{-}^{d-k+1}}{ h^{d-k-1}_{-}})$, we get in the basis $[e_{k}],[e_{k+1}]$ that
$$
\begin{array}{lcl}
[h_+^k\cap h_-^{d-k+1}]=[e_k],&& [h_-^{d-k}]=[e_{k+1}], \\ 
{[x^k\cap h_-^{d-k+1}]=\left[\bpm a\\ b\epm\right],} &&{[hx^k\cap h_-^{d-k+1}]=\left[\bpm \lambda_k(h_{\r}) a\\ \lambda_{k+1}(h_{\r})b\epm\right]}
\end{array}
$$
for some $a,b\in\R\backslash\{0\}$. The claim follows through a short calculation.
\end{proof}

\subsection{Grassmannian cross ratio}

The projective cross ratio has a generalization to Grassmannians, which we now describe. Set
\begin{align*}
\calA_k\: :=\{ (V_1,W_2,W_3,V_4) | &V_1,V_4\in \Gr_k(E), W_2,W_3\in \Gr_{d-k}(E) \text{ and } V_j\tv W_i\\
 &\text{ for } (j,i)=(1,2),(4,3) \text{ or } (j,i)=(1,3), (4,2)\}
\end{align*}

\begin{defn}\label{def.k cross ratio}
Let $(V_1,W_2,W_3,V_4)\in\calA_k$. Then the (generalized) cross ratio $\gc_k:\calA_k\to \R\cup\{\infty\}$ 
is defined by
\begin{align*}
\gc_k (V_1,W_2,W_3,V_4):=\frac{V_1\wedge W_3}{V_1\wedge W_2} \frac{V_4\wedge W_2}{V_4\wedge W_3},
\end{align*}
where $V_i\wedge W_j$ denotes the element $v_1\wedge \ldots\wedge v_k \wedge w_1\wedge \ldots \wedge w_{d-k}\in \wedge^d E \simeq \R$ for fixed bases $(v_1,\ldots, v_k),(w_1,\ldots, w_{d-k})$  of $V_i$ and $W_j$, respectively, and a fixed identification $\wedge^d E \simeq \R$. Note that the value of $\gc_k$ is independent of all choices made.
Here we use the convention $\frac{a}{0} :=\infty$ for any non-zero $a\in\R$.%, i.e. if $V_i$ is not transverse to $W_j$ for $(i,j)=(1,2)$ or $(4,3)$ then $\gc_k (V_1,W_2,W_3,V_4)=\infty$.
\end{defn}

\begin{remark}
The modulus of this (generalized) cross ratio is a special case of the, in general vector valued, cross ratios on flag manifolds $G\slash P$ constructed in \cite{Beyrer} - see Example 2.11 therein.
\end{remark}

\begin{lem}\label{lem property of grassmannian cro}
Let $V_1,V_4,V_5\in\Gr_k(E)$ and $W_2,W_3,W_5\in\Gr_{d-k}(E)$. Then whenever all quantities are defined we have
\begin{enumerate}
\item $\gc_k (V_1,W_2,W_3,V_4)^{-1}=\gc_k (V_4,W_2,W_3,V_1)=\gc_k (V_1,W_3,W_2,V_4)$
\item $\gc_k (V_1,W_2,W_3,V_4)\cdot\gc_k (V_4,W_2,W_3,V_5)=\gc_k (V_1,W_2,W_3,V_5)$
\item $\gc_k (V_1,W_2,W_3,V_4)\cdot\gc_k (V_1,W_3,W_5,V_4)=\gc_k (V_1,W_2,W_5,V_4)$
\item $\gc_k (V_1,W_2,W_3,V_4)= 0 \Longleftrightarrow V_1 \ntv W_3$ or $V_4\ntv W_2$
\item $\gc_k (V_1,W_2,W_3,V_4)=\infty\Longleftrightarrow V_1 \ntv W_2$ or $V_4\ntv W_3$
\item $\gc_k (V_1,W_2,W_3,V_4)=\gc_k (g V_1,g W_2,g W_3,g V_4)\quad  \forall g\in\SL(E)$
\end{enumerate}
\end{lem}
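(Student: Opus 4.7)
\medskip

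\noindent\textbf{Proof proposal.} All five assertions are formal consequences of the defining formula
\[
\gc_k(V_1,W_2,W_3,V_4)=\frac{V_1\wedge W_3}{V_1\wedge W_2}\,\frac{V_4\wedge W_2}{V_4\wedge W_3},
\]
together with the transversality assumption $V_j\pitchfork W_i$ built into the domain $\calA_k$, so I expect no serious obstacle. The plan is to treat the five items as routine bookkeeping; the only thing that needs a word of justification is that all the symbols appearing are well-defined real numbers, and this is exactly where the transversality hypothesis enters. Concretely, if $(v_1,\dots,v_k)$ is a basis of $V_j$ and $(w_1,\dots,w_{d-k})$ a basis of $W_i$, then $V_j\pitchfork W_i$ means precisely that $v_1\wedge\cdots\wedge v_k\wedge w_1\wedge\cdots\wedge w_{d-k}\neq 0$ in $\wedge^d E\simeq\R$; so under the fixed identification each of the four quantities appearing in the numerators and denominators is a nonzero real, and the ratios do not depend on the chosen bases (a change of basis contributes the same scalar to a numerator and a denominator). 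This already gives (4).

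For (5) I would observe that for any $g\in\GL(E)$ one has $(gv_1)\wedge\cdots\wedge(gv_k)\wedge(gw_1)\wedge\cdots\wedge(gw_{d-k})=(\det g)(v_1\wedge\cdots\wedge w_{d-k})$, so each of the four factors $V_j\wedge W_i$ is multiplied by $\det g$, and the two factors of $\det g$ in the numerator cancel against the two in the denominator. This actually yields $\GL(E)$-invariance, of which the stated $\SL(E)$-invariance is a special case. For (1), swapping $V_1\leftrightarrow V_4$ or swapping $W_2\leftrightarrow W_3$ exchanges numerator and denominator of each of the two fractions, giving the reciprocal in both cases. For (2) and (3), the middle factors telescope: in (2) the factor $V_4\wedge W_2/V_4\wedge W_3$ from the first term cancels the factor $V_4\wedge W_3/V_4\wedge W_2$ from the second term, leaving exactly $\gc_k(V_1,W_2,W_3,V_5)$; (3) is the analogous cancellation in the $W$-slot.

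The hardest aspect, if one can call it that, is merely checking that when one invokes (2) and (3) all quantities involved lie in $\calA_k$, i.e.\ that the new transversalities hold — but this is part of the hypothesis ``whenever all quantities are defined.'' I would therefore present the lemma as a short algebraic verification of the five items in turn, modeled on the analogous proof of Lemma \ref{lem.symmetry of projective cro} for the projective cross ratio, and not expect any genuine difficulty.
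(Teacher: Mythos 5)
Your proof is correct, and it matches the paper's (implicit) intent: the paper gives no proof of this lemma, just as it gives none for the analogous projective Lemma~\ref{lem.symmetry of projective cro}, treating both as routine verifications from the defining formula. Your verification of (1)--(5) is accurate, the telescoping in (2) and (3) is exactly right, and your observation that (5) actually holds for all of $\GL(E)$ (with the four factors of $\det g$ cancelling) is a harmless small strengthening.
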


\begin{notation} Given a $k-$Anosov representation $\r$ and pairwise distinct $x,y,z,w\in \dg$. Then $(x^k,y^{d-k},z^{d-k},w^k)\in\calA_k$. In this case we write
\[\gc_k(x,y,z,w):=\gc_k(x^k,y^{d-k},z^{d-k},w^k).\]
\end{notation}

The $k-$cross ratio can give information on the eigenvalues of specific elements:
We say that an element $A\in \SL(E)$ \emph{has an eigenvalue gap of index $k$}  if  $|\lambda_k(A)|>|\lambda_{k+1}(A)|$. In this case, we denote by $A^+_k\in\Gr_k(E)$ the span of the first $k$ generalized eigenspaces. Furthermore if $A$ also has an eigenvalue gap of index $d-k$  we denote by $A^-_k:=(A^{-1})^+_k\in\Gr_k(E)$. 

Then following is easy to check:
\begin{lem}
If $A$ has eigenvalue gaps of indices $k,d-k$ then for every $V\in \Gr_{k}(E)$ transverse to $A_{d-k}^\pm$, and $W\in \Gr_{d-k}(E)$ transverse to $A_{k}^\pm$ it holds
$$\gc_k (A^-_k,W,AW,A^+_{k})=\gc_k (V,A^-_{d-k},A^+_{d-k},AV)=\frac{\lambda_1(A) \ldots \lambda_k(A) }{\lambda_d(A) \ldots \lambda_{d-k+1}(A)}$$ 
\end{lem}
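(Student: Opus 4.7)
My approach is to exploit the $A$-invariance of the four eigenspaces $A_k^{\pm}$ and $A_{d-k}^\pm$, combined with the assumption $\det A = 1$. The point is that if $U$ is an $A$-invariant subspace on which $A$ acts as multiplication by the eigenvalues $\lambda_{i_1},\ldots,\lambda_{i_j}$ on some eigenbasis $(u_1,\dots,u_j)$, then
\[
A u_1\wedge\cdots\wedge Au_j \;=\;\Bigl(\prod_\ell \lambda_{i_\ell}\Bigr)\; u_1\wedge\cdots\wedge u_j.
\]
Here $A_{d-k}^+$ is invariant with eigenvalues $\lambda_1,\dots,\lambda_{d-k}$ and $A_{d-k}^-$ with eigenvalues $\lambda_{k+1},\dots,\lambda_d$; similarly $A_k^+$ has eigenvalues $\lambda_1,\dots,\lambda_k$ and $A_k^-$ has eigenvalues $\lambda_{d-k+1},\dots,\lambda_d$.

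For the second equality, I would first compute, for any $V\in\Gr_k(E)$ with basis $(v_1,\dots,v_k)$, how the wedge products transform under $A$. Writing a basis $(w_1,\dots,w_{d-k})$ of $A_{d-k}^+$ made of eigenvectors, and using $\det A=1$,
\[
AV\wedge A_{d-k}^+ \;=\; Av_1\wedge\cdots\wedge Av_k\wedge w_1\wedge\cdots\wedge w_{d-k}\;=\;\tfrac{1}{\lambda_1\cdots\lambda_{d-k}}\; V\wedge A_{d-k}^+,
\]
by bringing out the scalar $\lambda_1\cdots\lambda_{d-k}$ from $Aw_1\wedge\cdots\wedge Aw_{d-k}$ and then applying $A^{-1}$ to the whole wedge (which multiplies by $\det A^{-1}=1$). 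The same computation with $A_{d-k}^-$ gives $AV\wedge A_{d-k}^-=(\lambda_{k+1}\cdots\lambda_d)^{-1}\,V\wedge A_{d-k}^-$. Substituting into the definition of $\gc_k$ the $V\wedge A_{d-k}^{\pm}$ factors cancel and one is left with
\[
\gc_k(V,A_{d-k}^-,A_{d-k}^+,AV)=\frac{\lambda_1\cdots\lambda_{d-k}}{\lambda_{k+1}\cdots\lambda_d}=\frac{\lambda_1\cdots\lambda_k}{\lambda_d\cdots\lambda_{d-k+1}},
\]
after cancelling the common factors $\lambda_{k+1}\cdots\lambda_{d-k}$ from top and bottom.

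For the first equality the roles of the eigenspaces and the "moving" subspace are swapped: now $A_k^{\pm}$ is invariant, and the same manipulation yields $A_k^{+}\wedge AW=(\lambda_1\cdots\lambda_k)^{-1}\,A_k^{+}\wedge W$ and $A_k^{-}\wedge AW=(\lambda_{d-k+1}\cdots\lambda_d)^{-1}\,A_k^{-}\wedge W$. Plugging into the definition of $\gc_k(A_k^-,W,AW,A_k^+)$ again cancels the $\wedge W$ factors and gives the same value $\frac{\lambda_1\cdots\lambda_k}{\lambda_d\cdots\lambda_{d-k+1}}$. Throughout, the transversality assumptions on $V$ and $W$ are exactly what guarantees that each of the four wedges $V\wedge A_{d-k}^\pm$, $A_k^\pm\wedge W$ is nonzero, so the cross ratios are well defined.

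There is no real obstacle here: the whole lemma is a one-line consequence of the invariance of $A_k^\pm$, $A_{d-k}^\pm$ plus the unimodularity of $A$. The only thing to be careful about is the bookkeeping of which product of eigenvalues appears when $A$ is pulled across a given invariant subspace, and the small rewriting $\lambda_1\cdots\lambda_{d-k}/(\lambda_{k+1}\cdots\lambda_d)=\lambda_1\cdots\lambda_k/(\lambda_{d-k+1}\cdots\lambda_d)$ at the end.
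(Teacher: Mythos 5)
Your proof is correct and matches the intended argument: the paper offers no proof (stating only that the lemma is ``easy to check''), and the direct computation from the definition of $\gc_k$, exploiting $A$-invariance of the four reference subspaces together with $\det A=1$, is precisely the routine verification the authors have in mind. The cancellation $\lambda_1\cdots\lambda_{d-k}/(\lambda_{k+1}\cdots\lambda_d)=\lambda_1\cdots\lambda_k/(\lambda_{d-k+1}\cdots\lambda_d)$ holds regardless of whether $k\lessgtr d-k$, and your identification of the generalized eigenvalues attached to $A_k^{\pm}$ and $A_{d-k}^{\pm}$ is right.

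One small imprecision worth fixing: you invoke a basis of $A_{d-k}^+$ (and likewise $A_k^{\pm}$, $A_{d-k}^-$) ``made of eigenvectors,'' but the hypotheses only give eigenvalue \emph{gaps} at indices $k$ and $d-k$; $A$ need not be semisimple on the invariant blocks, so such a basis may not exist. This does not affect the argument, because all you actually use is that $A$ acts on the top exterior power $\wedge^{\dim U}U$ of an $A$-invariant subspace $U$ by the scalar $\det(A|_U)$, which equals the product of the corresponding generalized eigenvalues. Replacing ``eigenbasis'' by ``any basis, since $A$ acts on $\wedge^{\dim U}U$ by $\det(A|_U)$'' makes the step airtight without changing anything downstream.
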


Note that, since the boundary map of a $k-$Anosov representation is dynamics preserving, we have that $g_{\r}$ has eigenvalue gaps of indices $k,d-k$ for every non-trivial $g\in\G$. Furthermore $\xi^k(g_{+})=(g_{\r})^+_k, \xi^k(g_{-})=(g_{\r})^-_k$  for $g_{\pm}$ attractive and repulsive fixed points of $g$ respectively. This yields: 

\begin{cor}\label{cor.periods of k cross ratio}
Let $\r$ be $k-$Anosov. Then for every non-trivial $g\in\G$ and every $x\in\dg\backslash\{g_{\pm}\}$ we have
$$\gc_k(g_{-},x,gx,g_{+})=\gc_k(x,g_{-},g_{+},gx)=\frac{\lambda_1(g_{\r}) \ldots \lambda_k(g_{\r}) }{\lambda_d(g_{\r}) \ldots \lambda_{d-k+1}(g_{\r})}>1.$$
\end{cor}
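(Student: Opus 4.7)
The Corollary is essentially immediate from the preceding Lemma applied to $A=\rho(g)$. Since $\rho$ is $k$-Anosov, by Remark \ref{rem ansov rep properties}(2) it is also $(d-k)$-Anosov and admits continuous transverse equivariant boundary maps $\xi^k$ and $\xi^{d-k}$. The dynamics-preserving identifications recalled just before the Corollary give $\xi^k(g_\pm)=(g_\rho)_k^\pm$ and $\xi^{d-k}(g_\pm)=(g_\rho)_{d-k}^\pm$, so in particular $g_\rho$ has eigenvalue gaps at both indices $k$ and $d-k$. For $x\in\dg\setminus\{g_\pm\}$ I would set $V:=\xi^k(x)$ and $W:=\xi^{d-k}(x)$; transversality of the boundary maps then forces $V\tv(g_\rho)_{d-k}^{\pm}$ and $W\tv(g_\rho)_k^{\pm}$, so the Lemma applies with this choice of $V,W$ and $A=\rho(g)$.

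Applying the Lemma yields that both $\gc_k((g_\rho)^-_k,W,AW,(g_\rho)^+_k)$ and $\gc_k(V,(g_\rho)^-_{d-k},(g_\rho)^+_{d-k},AV)$ equal $\tfrac{\lambda_1\cdots\lambda_k}{\lambda_d\cdots\lambda_{d-k+1}}(g_\rho)$. By $\rho$-equivariance of the boundary maps, $AV=\rho(g)\xi^k(x)=\xi^k(gx)$ and similarly $AW=\xi^{d-k}(gx)$; substituting and using the dynamical identifications translates the two cross ratios into $\gc_k(g_-,x,gx,g_+)$ and $\gc_k(x,g_-,g_+,gx)$ respectively, giving both equalities of the Corollary.

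For the strict inequality $>1$, I would invoke the Anosov gap conditions at indices $k$ and $d-k$, which give $|\lambda_k(g_\rho)|>|\lambda_{k+1}(g_\rho)|$ and $|\lambda_{d-k}(g_\rho)|>|\lambda_{d-k+1}(g_\rho)|$. Combined with the non-increasing ordering of moduli, a short case analysis (according to whether $k\leq d/2$ or $k>d/2$, the latter case involving cancellation of the common factors $\lambda_{d-k+1},\ldots,\lambda_k$ between numerator and denominator) produces a strict inequality $|\lambda_i(g_\rho)|>|\lambda_j(g_\rho)|$ for the relevant index pairs $i\leq k$, $j\geq d-k+1$; taking products yields the desired strict bound on the associated moduli products. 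No serious obstacle is expected here; the main content really lies in the second paragraph, namely correctly matching up the dynamical identifications to convert the abstract Lemma into the statement about cross ratios on boundary values.
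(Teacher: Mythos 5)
Your treatment of the two equalities is correct and matches the paper: applying the preceding Lemma with $A=\rho(g)$, $V=\xi^k(x)$, $W=\xi^{d-k}(x)$, together with equivariance and the dynamics-preserving identifications, is precisely what is intended.

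For the strict inequality $>1$, however, there is a genuine gap. Your argument only establishes
$\left|\tfrac{\lambda_1\cdots\lambda_k}{\lambda_d\cdots\lambda_{d-k+1}}(g_\rho)\right|>1$,
but the corollary asserts that the quantity itself (without absolute value) exceeds $1$. Since $\rho$ is only assumed $k$-Anosov, the eigenvalues $\lambda_1,\ldots,\lambda_k$ and $\lambda_{d-k+1},\ldots,\lambda_d$ may include negative reals (or complex pairs), and a priori the ratio of the two products could be negative; the modulus bound alone does not rule this out. The paper supplies the missing positivity by a continuity argument that uses the circle topology of $\dg$: on the connected arc of $\dg\setminus\{g_\pm\}$ containing $x$, the map $y\mapsto\gc_k(g_-,x,y,g_+)$ is continuous and never zero (transversality), and equals $1$ at $y=x$; hence it stays in $\R_{>0}$, so $\gc_k(g_-,x,gx,g_+)>0$. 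Combined with your modulus estimate this yields $>1$. You should add this (or an equivalent) positivity step; the eigenvalue gap argument by itself is not sufficient.
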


\begin{proof}
It remains to show that $\gc_k(g_{-},x,gx,g_{+})>1$. Observe that $k-$Anosov yields that $$\left|\frac{\lambda_1(g_{\r}) \ldots \lambda_k(g_{\r}) }{\lambda_d(g_{\r}) \ldots \lambda_{d-k+1}(g_{\r})}\right|>1.$$
For $y$ in the same connected component of $\dg\backslash\{g_{\pm}\}$ as $x$ we define the map $y\mapsto gc_k(g_{-},x,y,g_{+})$, which is continuous and never zero. As $gc_k(g_{-},x,x,g_{+})=1$, the image of this map is in $\R_{>0}$. In particular 
$$\gc_k(g_{-},x,gx,g_{+})>0,$$
which yields the claim.
\end{proof}

\subsection{Relations of the cross ratios}

We now investigate relations between the various cross ratios introduced so far. For this it will be useful to remember the projection $[\cdot]_X$ introduced in Notation \ref{n.1}.

\begin{prop}\label{prop.k-cross ratio and projection}
Let $P^k\neq Q^k\in \Gr_k(E)$. Denote by $X^{k+l}:=\<P^k,Q^k\>$ their span, by $X^{k-l}:=P^k\cap Q^k$ their intersection and by $X:=\quotient{X^{k+l}}{X^{k-l}}$ the quotient. For each $(d-k)$-dimensional subspaces $S^{d-k},T^{d-k}$ transverse to $X^{k-l}$ it holds 
$$\gc_k\left(P^k,\,S^{d-k},T^{d-k},Q^k\right)=\gc_l([P^k]_X, [S^{d-k}\cap X^{k+l}]_X,[T^{d-k}\cap X^{k+l}]_X,[Q^k]_X),$$
whenever one of the sides is defined.
\end{prop}
\begin{proof}
This is a direct computation: if we pick a basis $(e_1,\ldots,e_d)$ such that 
$$X^j=\langle e_1,\ldots, e_{j}\rangle,\quad j=k-l,k+l$$ 
and choose a basis of $S^{d-k}$ (resp. $T^{d-k}$) whose first $l$ vectors belong to $X^{k+l}$, in order to compute the cross ratio on the left hand side we have to compute the determinant of four block upper triangular matrices, whose first blocks are always the identity, the third blocks cancel between the numerator and denominator, and the remaining blocks (of size $2l\times 2l$) gives the desired cross ratio on the right.  
\end{proof}

Since by definition $\gc_1=\pc$ on $\P(\R^2)$, we immediately get

\begin{cor}\label{cor.projective and k-cross ratio}
Let $P^k,Q^k\in \Gr_k(E)$ be such that $\dim P^k\cap Q^k=k-1$. Then, using the notation of the Proposition \ref{prop.k-cross ratio and projection}, it holds 
$$\gc_k\left(P^k,\,S^{d-k},T^{d-k},Q^k\right)=\pc([P^k]_X, [S^{d-k}\cap X^{k+1}]_X,[T^{d-k}\cap X^{k+1}]_X,[Q^k]_X),$$
whenever one of the sides is defined. 
\end{cor}

In the special case of points and hyperplanes in $\Pp(E)$  the above connection works for all transverse points

\begin{cor}\label{cor.1cross ratio as projective one}
Let $p^1,q^1\in\Pp(E)$ be transverse to $V^{d-1},W^{d-1}\in \Gr_{d-1}(E)$. Then
$$\gc_1(p^1,V^{d-1},W^{d-1},q^1)=\pc_{V^{d-1}\cap W^{d-1}}(p^1,V^{d-1},W^{d-1},q^1).$$
\end{cor}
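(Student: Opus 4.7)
The plan is to derive the corollary as essentially the $k=1$ specialization of Proposition~\ref{prop.projective and k-cross ratio}. First I would dispose of the trivial case $p^1=q^1$: the left-hand side is directly $1$ from the definition of $\gc_1$, while the right-hand side equals $1$ by Lemma~\ref{lem.symmetry of projective cro}(5) (first and fourth entries coincide). So assume $p^1\neq q^1$, and take $P=p^1$, $Q=q^1$, $S^{d-1}=V^{d-1}$, $T^{d-1}=W^{d-1}$ in Proposition~\ref{prop.projective and k-cross ratio}; then $X^{k-1}=P\cap Q=\{0\}$ and $X^{k+1}=X^2:=\langle p^1,q^1\rangle$, and the proposition yields
\[
\gc_1(p^1,V^{d-1},W^{d-1},q^1)=\pc\bigl([p^1],\,[V^{d-1}\cap X^2],\,[W^{d-1}\cap X^2],\,[q^1]\bigr),
\]
a cross ratio computed in $\Pp(X^2)\simeq\R\Pp^1$.

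The core step is to match this with $\pc_{V^{d-1}\cap W^{d-1}}(p^1,V^{d-1},W^{d-1},q^1)$, which lives in $\Pp(E/(V^{d-1}\cap W^{d-1}))$. To this end I would introduce the natural linear map $\phi\colon X^2\hookrightarrow E\twoheadrightarrow E/(V^{d-1}\cap W^{d-1})$ between two $2$-dimensional spaces. In the generic case $X^2\cap V^{d-1}\cap W^{d-1}=\{0\}$, $\phi$ is a linear isomorphism, hence it preserves the projective cross ratio on $\R\Pp^1$. A short dimension count then verifies that it sends the four entries above to exactly those appearing in the definition of $\pc_{V^{d-1}\cap W^{d-1}}$: for instance, $\phi(V^{d-1}\cap X^2)=V^{d-1}/(V^{d-1}\cap W^{d-1})$ because $(V^{d-1}\cap X^2)\oplus(V^{d-1}\cap W^{d-1})=V^{d-1}$, and $\phi(p^1)=(p^1\oplus(V^{d-1}\cap W^{d-1}))/(V^{d-1}\cap W^{d-1})$ since $p^1\cap V^{d-1}\cap W^{d-1}=\{0\}$.

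The only remaining point is the degenerate case where $X^2\cap V^{d-1}\cap W^{d-1}$ is a line $\ell$; here $\ell$ is forced to equal both $V^{d-1}\cap X^2$ and $W^{d-1}\cap X^2$, so on the $\Pp(X^2)$ side the second and third cross-ratio entries coincide and the value is $1$ by Lemma~\ref{lem.symmetry of projective cro}(5). On the other side $\phi(p^1)=\phi(q^1)=\phi(X^2)$ in $E/(V^{d-1}\cap W^{d-1})$, which gives $p^1\oplus(V^{d-1}\cap W^{d-1})=q^1\oplus(V^{d-1}\cap W^{d-1})$, so the first and fourth entries of $\pc_{V^{d-1}\cap W^{d-1}}$ coincide and it again evaluates to $1$. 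The whole argument is essentially a notational unwinding once Proposition~\ref{prop.projective and k-cross ratio} is in hand; I do not anticipate any genuine obstacle beyond carefully matching the four entries under $\phi$.
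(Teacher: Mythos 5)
Your argument is correct, and it does prove the corollary. A small remark first: the statement is not literally the $k=1$ case of Proposition~\ref{prop.projective and k-cross ratio}, since that specialization produces a cross ratio in $\Pp(\langle p^1,q^1\rangle)$ rather than in $\Pp\left(E/(V^{d-1}\cap W^{d-1})\right)$; you correctly notice this and bridge the gap via the isomorphism $\phi\colon\langle p^1,q^1\rangle\to E/(V^{d-1}\cap W^{d-1})$, checking that $\phi$ carries each of the four entries to the corresponding one, and then handling separately the case where $\phi$ drops rank. All of this is sound, including the degenerate analysis: when $\langle p^1,q^1\rangle\cap V^{d-1}\cap W^{d-1}$ is a line $\ell$, you rightly observe it forces $V^{d-1}\cap\langle p^1,q^1\rangle=W^{d-1}\cap\langle p^1,q^1\rangle=\ell$ on one side and $p^1\oplus(V^{d-1}\cap W^{d-1})=q^1\oplus(V^{d-1}\cap W^{d-1})$ on the other, so both sides equal $1$ by Lemma~\ref{lem.symmetry of projective cro}(5).

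That said, there is a cleaner and more uniform route, which I believe is what the phrase ``it works for all transverse points'' is meant to signal. Rather than specializing Proposition~\ref{prop.projective and k-cross ratio} to $k=1$ with $P=p^1$, $Q=q^1$, specialize it to $k=d-1$ with $P=V^{d-1}$, $Q=W^{d-1}$, $S^1=p^1$, $T^1=q^1$. Then $X^{k-1}=V^{d-1}\cap W^{d-1}$ and $X^{k+1}=E$, so the proposition directly lands in $\Pp\left(E/(V^{d-1}\cap W^{d-1})\right)$ with no transfer map needed, giving
\[
\gc_{d-1}\left(V^{d-1},p^1,q^1,W^{d-1}\right)=\pc\left([V^{d-1}],[p^1],[q^1],[W^{d-1}]\right).
\]
Combining the elementary identity $\gc_1(p^1,V^{d-1},W^{d-1},q^1)=\gc_{d-1}(V^{d-1},p^1,q^1,W^{d-1})$ (used in the paper in the proof of Proposition~\ref{p.posvsstr}) with the Klein-four symmetry $\pc(x_1,x_2,x_3,x_4)=\pc(x_2,x_1,x_4,x_3)$ of the projective cross ratio recovers exactly the statement, with no generic/degenerate case split and no separate treatment of $p^1=q^1$ (only the trivial case $V^{d-1}=W^{d-1}$, where both sides are $1$, sits outside the scope of the proposition). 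Your proof is a legitimate alternative, but the $k=d-1$ route is shorter and explains why the result holds \emph{for all} transverse points without extra hypotheses.
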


\section{Partial hyperconvexity}\label{s.hyp}

\subsection{Property $H_k$}
The following transversality property was introduced by Labourie \cite[Section 7.1.4]{Labourie-IM} in the context of Hitchin representations, and generalized to other groups in \cite[Section 8.2]{PSW}.
\begin{defn}[{\cite[Section 7.1.4]{Labourie-IM}}]
A representation $\rho:\Gamma\to\SL(E)$ satisfies \emph{property $H_k$} if it is $\{k-1,k,k+1\}-$Anosov and the following sum is direct
\begin{align}\label{eq property H}
x^k+ \left(y^k\cap z^{d-k+1}\right)+ z^{d-k-1}.
\end{align} 
\end{defn}
\begin{figure}[h]\label{fig:H}
\begin{tikzpicture}
\draw (0,0) circle [radius=1cm];
\filldraw (1,0) circle [radius= 1pt] node[right] {$z^{d-k-1}$};
\draw (1,-1.5) to (1, 2);
\node at (1, 2) [above] {$z^{d-k+1}$};
\node at (-.2,.97) [above] {$x^{k}$};
\node at (-.2,-.97) [below] {$y^{k}$};
\draw (-1,.8) to (1.3,1.25);
\draw (-1,-.8) to (1.3,-1.25);
\filldraw (1, 1.17) circle [radius=1pt] node[above right]{$x^k\cap z^{d-k+1}$};
\filldraw (1, -1.17) circle [radius=1pt] node[below right]{$y^k\cap z^{d-k+1}$};
\end{tikzpicture}
\caption{Property $H_k$}
\end{figure}
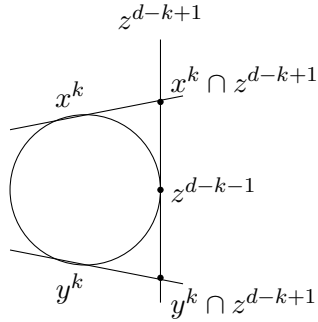
Property $H_k$ has the following equivalent characterization, which follows from the transversality properties guaranteed by $(k-1)-$Anosov:
\begin{align}\label{eq.char of Hk}
%\nonumber & x^k\oplus \left(y^k\cap z^{d-k+1}\right)\oplus z^{d-k-1} &&=E\\ \nonumber
%\Longleftrightarrow & x^{k-1}\oplus\left(x^k\cap z^{d-k+1}\oplus y^k\cap z^{d-k+1}\right)\oplus z^{d-k-1} &&=E\\
%\Longleftrightarrow &
 \left(x^k\cap z^{d-k+1}\right)\oplus\left( y^k\cap z^{d-k+1}\right)\oplus z^{d-k-1}  = z^{d-k+1}.
\end{align}

The following result from \cite{PSW} will be important for us  (see also \cite{ZZ}):
\begin{prop}[{\cite[Proposition 8.11]{PSW}}]\label{pro 1-hyperconvex for property Hk}Let $\rho:\Gamma\to\SL(E)$ satisfy property $H_k$. Then the boundary curve $\xi^k$ has $C^1-$image and the tangent space is given by
\begin{align*}
T_{x^k}\xi^k(\dg)=\{ \phi\in \Hom(x^k,y^{d-k})| x^{k-1}\subseteq \ker \phi, \;{\rm Im}\: \phi\subseteq  x^{k+1}\cap y^{d-k}  \}
\end{align*}
for any $y\neq x \in\dg$.
\end{prop}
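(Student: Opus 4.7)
The plan is to work in a local chart around $V_0 := \xi^k(x)$ in $\Gr_k(E)$ and to identify the limit of the secant directions of $\xi^k$ at $x$ using property $H_k$.

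First I would fix an auxiliary $y_0 \in \dg\setminus\{x\}$ and set $W := \xi^{d-k}(y_0)$. By $k$-Anosov transversality $E = V_0 \oplus W$, giving a chart $\Hom(V_0,W)\hookrightarrow\Gr_k(E)$ via $\phi\mapsto\{v+\phi(v):v\in V_0\}$ that identifies $T_{V_0}\Gr_k(E) \cong \Hom(V_0,W)$. The transversality $\xi^{k+1}(x)\cap\xi^{d-k-1}(y_0)=0$ (from the $(k+1)$- and $(d-k-1)$-Anosov conditions built into property $H_k$) yields $\dim(\xi^{k+1}(x)\cap W)=1$; set $L := \xi^{k+1}(x)\cap W$. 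The candidate tangent is then the $1$-dimensional subspace $T := \{\phi\in\Hom(V_0,W) : \xi^{k-1}(x)\subseteq\ker\phi,\ \operatorname{Im}\phi\subseteq L\}$. For $y$ near $x$, $\xi^k(y)$ is transverse to $W$, so $\xi^k(y)=\{v+\phi_y(v):v\in V_0\}$ for a unique $\phi_y\in\Hom(V_0,W)$ with $\phi_y\to 0$ as $y\to x$.

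The heart of the proof is to show $[\phi_y]\to[T]$ in $\P(\Hom(V_0,W))$. My approach is to show that any subsequential limit $[\phi_\infty]$ of $[\phi_{y_n}]$ for $y_n\to x$ lies in $[T]$, then conclude by $1$-dimensionality. The inclusion $\operatorname{Im}\phi_\infty \subseteq L$ would come from the nesting $\xi^k(y)\subseteq\xi^{k+1}(y)$ combined with the continuity $\xi^{k+1}(y)\to\xi^{k+1}(x)$: projecting to $W$ along $V_0$ forces the asymptotic image of $\phi_y$ into $\xi^{k+1}(x)\cap W = L$. Dually, the inclusion $\xi^{k-1}(x)\subseteq\ker\phi_\infty$ follows from $\xi^{k-1}(y)\subseteq\xi^k(y)$ and continuity $\xi^{k-1}(y)\to\xi^{k-1}(x)\subset V_0$, which forces $\phi_y$ to be small on $\xi^{k-1}(x)$. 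The crucial step, and where property $H_k$ enters, is to upgrade these from $o(1)$ to $o(\|\phi_y\|)$ so that both constraints survive in the projective limit: property $H_k$ provides the directness of $\xi^k(x)+(\xi^k(y)\cap\xi^{d-k+1}(z))+\xi^{d-k-1}(z) = E$, which by a compactness argument on $\nu_0$-separated triples (as in the proof of Proposition \ref{prop:continuity}) yields uniform angle bounds that pin down the right scaling.

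The $C^1$-regularity of $\xi^k(\dg)\subset\Gr_k(E)$ would then follow: the candidate tangent $T = T(x)$ depends continuously on $x$ via continuity of $\xi^{k-1}$ and $\xi^{k+1}$, and the convergence $[\phi_y]\to[T(x)]$ is locally uniform in $x$ by the same compactness/equivariance mechanism. The main obstacle will be the quantitative estimate — extracting $o(\|\phi_y\|)$ bounds on both the kernel and image defects of $\phi_y$ from property $H_k$ alone, without recourse to $H_{k-1}$ or $H_{k+1}$. This hinges on a careful exploitation of the directness of the sum in $H_k$, both as an encoder of the rank-one limit structure and as a source of uniform angle bounds obtained by shifting pairs $(x,y_n)$ together with a third point $z$ into $\nu_0$-separated triples via equivariance.
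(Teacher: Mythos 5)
Note first that this proposition is not proved in the present paper: it is imported verbatim from \cite{PSW}, so there is no in-paper argument to compare against, and your proposal has to be judged on its own terms.

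Your setup is sound: the affine chart $\Hom(V_0,W)$, the identification of the one-dimensional candidate tangent $T$, and the reduction to showing that every subsequential limit of the normalized secants $[\phi_y]$ lies in $\P(T)$. The gap is that the central step is asserted rather than proved, and you flag this yourself as the ``main obstacle.'' Moreover, the claim that nesting plus continuity already gives the two inclusions ``at the $o(1)$ level'' is weaker than it looks. The bound one extracts from $\xi^k(y)\subset\xi^{k+1}(y)$ and $\xi^{k+1}(y)\to\xi^{k+1}(x)$ controls the off-$L$ component of $\phi_y(v)$ by (a multiple of) $d(\xi^{k+1}(y),\xi^{k+1}(x))$; this does go to $0$, but so does $\|\phi_y\|\asymp d(\xi^{k}(y),\xi^{k}(x))$, and these two distances decay at a priori unrelated exponential rates governed by the singular value ratios $\sigma_{k+2}/\sigma_{k+1}$ and $\sigma_{k+1}/\sigma_{k}$ respectively, whose quotient $\sigma_k\sigma_{k+2}/\sigma_{k+1}^2$ can be arbitrarily large or small. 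So nesting alone gives nothing toward the projective limit; the only useful content is the relative estimate $o(\|\phi_y\|)$, and this must come from property $H_k$. Writing that it ``hinges on a careful exploitation of the directness'' and that a ``compactness argument on $\nu_0$-separated triples \ldots yields uniform angle bounds that pin down the right scaling'' gestures at the correct ingredients (equivariant compactness together with Lemma~\ref{l.BPSA6} and Proposition~\ref{p.bdrydist}), but does not carry out the estimate; that computation is precisely the hard core of the argument in \cite{PSW}. The same remark applies to the final sentence asserting local uniformity in the base point, which is needed to upgrade pointwise existence of tangent lines to a genuinely $C^1$ image. As it stands, this is a plausible plan rather than a proof.
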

The tangent space $T_{x^k}\Gr_k(E)$ has a natural identification with $\Hom(x^k,y^{d-k})$ for every $y^{d-k}\in \Gr_{d-k}(E)$ transverse to $x^k$, and the above proposition is independent on the choice of $y\neq x \in\dg$. With a slight abuse of notation we will not distinguish between $T_{x^k}\Gr_k(E)$ and $\Hom(x^k,y^{d-k})$.

Proposition \ref{pro 1-hyperconvex for property Hk} can be rephrased as saying that, whenever $\rho$ satisfies property $H_k$, at the point $x^k$, the differentiable curve $\xi^k$ is tangent to the \emph{pencil} 
$$\R\P^1=\P\left(\quotient{x^{k+1}}{x^{k-1}}\right):=\left\{Z\in\Gr_k(E)| \; x^{k-1}\subset Z\subset x^{k+1}\right\}.$$
\begin{defn}\label{d.linearapp}
A \emph{linear approximation} of $\xi^k$ at $x$ is a smooth monotone map 
$x_t:(-\epsilon, \epsilon)\to \P(\quotient{x^{k+1}}{x^{k-1}})$ with $x_0=x^k$ and non vanishing derivative at $0$.
\end{defn}

As the Anosov property is open in $\Hom(\G,\SL(E))$ and property $H_k$ is a transversality condition on the set of triples of points in the boundary, one immediately gets (cfr. \cite[Proposition 8.2]{Labourie-IM}):
\begin{prop}\label{prop Hk open property}
The set of representations satisfying property $H_k$ is open in $\Hom(\G,\SL(E))$.  
\end{prop}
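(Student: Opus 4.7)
The plan is to combine the openness of the Anosov property with continuous dependence of boundary maps on the representation and a cocompactness argument on triples. Fixing $\rho_0$ satisfying property $H_k$, I would first use openness of the $\{k-1,k,k+1\}$-Anosov condition to produce a neighborhood $U$ of $\rho_0$ on which all six boundary maps $\xi^j_\rho$, $j\in\{k-1,k,k+1,d-k-1,d-k,d-k+1\}$, entering the definition of $H_k$ are defined and continuous. Since $\xi^{d-k}_\rho(z)\subset \xi^{d-k+1}_\rho(z)$ is transverse to $\xi^k_\rho(y)$ for $y\neq z$, the intersection $\xi^k_\rho(y)\cap \xi^{d-k+1}_\rho(z)$ is automatically a line; the dimensions then add up to $d$, so directness of the three-term sum in the definition of $H_k$ is equivalent to non-vanishing of a determinant built from bases of the three summands. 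This makes property $H_k$ an open condition at each individual triple.

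Next I would upgrade this to joint continuity of $(\rho,x)\mapsto \xi^j_\rho(x)$ on $U\times \dg$. This is standard and follows from the uniform estimate of Proposition \ref{p.bdrydist}: the constants $C,\mu$ there can be chosen locally uniformly in $\rho\in U$ (since the quasi-isometry constants of the orbit map vary continuously with $\rho$), so the Cartan attractors of the orbit map converge to $\xi^j_\rho$ uniformly in $\rho$. Consequently the determinant discussed above is a continuous function of $(\rho,x,y,z)$ on $U\times \dg^{(3)}$, where $\dg^{(3)}$ denotes the set of pairwise distinct triples.

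Finally, I would invoke $\Gamma$-equivariance of $H_k$ together with cocompactness of $\Gamma\curvearrowright \dg^{(3)}$: using the $\nu_0$-separation set up before Proposition \ref{prop:continuity}, every pairwise distinct triple is $\Gamma$-equivalent to one in the compact set $K$ of $\nu_0$-separated triples, and since the directness condition is $\Gamma$-equivariant it is enough to verify it on $K$. The continuous determinant is nowhere zero on $\{\rho_0\}\times K$, so by compactness it stays bounded away from zero on $V\times K$ for some neighborhood $V\subseteq U$ of $\rho_0$, giving openness. The main obstacle is the joint continuity of the boundary maps in $\rho$; the remaining ingredients are soft transversality and compactness arguments.
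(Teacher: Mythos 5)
Your proof is correct and takes essentially the same approach as the paper: the paper simply remarks that openness follows ``immediately'' from openness of the Anosov condition together with $H_k$ being a transversality condition on triples (citing Labourie), and your write-up supplies the standard details behind that remark (joint continuity of boundary maps in $(\rho,x)$ and cocompactness of $\Gamma$ on the space of distinct triples).
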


Recall that we denote by $\r^{\dual}$ the contragradient representation. The following proposition follows from a straight forward computation:

\begin{prop}\label{prop Hk for dual representation}
A representation $\r:\G\to \SL(E)$ satisfies properties $H_k$ if and only if $\r^{\dual}$ satisfies property $H_{d-k}$.
\end{prop}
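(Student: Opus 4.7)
The plan is to rewrite property $H_{d-k}$ for $\rho^{\dual}$ directly in terms of the boundary data of $\rho$, after which it will visibly coincide with property $H_k$ for $\rho$. The first ingredient is the relation between boundary maps: since $\rho^{\dual}(g) = \rho(g^{-1})^{T}$, one has $\sigma_i(\rho^{\dual}(g)) = \sigma_{d-i+1}(\rho(g))^{-1}$, so $\rho$ is $j$-Anosov if and only if $\rho^{\dual}$ is $(d-j)$-Anosov; passing to annihilators in the Cartan-attractor construction of Proposition \ref{p.bdry} then gives the identity $\xi^{j}_{\rho^{\dual}}(x) = (\xi^{d-j}_{\rho}(x))^{\perp}$. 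In particular the Anosov hypotheses in $H_k$ for $\rho$ and in $H_{d-k}$ for $\rho^{\dual}$ are equivalent.

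Next I would substitute these identifications into the direct-sum condition defining $H_{d-k}$ for $\rho^{\dual}$. Using $(A)^{\perp} \cap (B)^{\perp} = (A+B)^{\perp}$, the three summands simplify to $(x^k)^{\perp}$, $(y^k + z^{d-k-1})^{\perp}$ and $(z^{d-k+1})^{\perp}$, which have dimensions $d-k$, $1$ and $k-1$ respectively; here the middle dimension uses $y^k \cap z^{d-k-1} \subset y^k \cap z^{d-k} = 0$, which follows from $k$-Anosov. Since the three dimensions sum to $d$, the direct-sum-equal-$E^{*}$ condition reduces to the single vanishing
\[x^k \cap (y^k + z^{d-k-1}) \cap z^{d-k+1} = 0 \qquad \text{for all pairwise distinct } x,y,z \in \dg.\]

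Finally I would verify this is equivalent to $H_k$ for $\rho$. By the analogous dimension count $k + 1 + (d-k-1) = d$, and using again $y^k \cap z^{d-k-1}=0$ to see $(y^k \cap z^{d-k+1})$ and $z^{d-k-1}$ are in direct sum, property $H_k$ amounts to the vanishing $x^k \cap \bigl((y^k \cap z^{d-k+1}) + z^{d-k-1}\bigr) = 0$. The equivalence of the two kernel conditions is an elementary element-chasing argument exploiting the inclusion $z^{d-k-1} \subset z^{d-k+1}$: if $v = y + z$ with $y \in y^k$ and $z \in z^{d-k-1}$, then $v \in z^{d-k+1}$ if and only if $y \in y^k \cap z^{d-k+1}$, so the two kernels coincide. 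I do not anticipate a real obstacle here; the only mild point is keeping the bookkeeping of dimensions straight, which confirms that the proposition indeed reduces to the ``straight forward computation'' advertised.
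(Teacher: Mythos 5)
Your proof is correct and follows essentially the same route as the paper's (commented-out) verification: pass to annihilators, use $A^{\perp}\cap B^{\perp}=(A+B)^{\perp}$ together with a dimension count to turn the direct-sum condition into a single vanishing intersection, and conclude via the modular law for $z^{d-k-1}\subset z^{d-k+1}$, with the Anosov-index translation $j\leftrightarrow d-j$ and the identity $\xi^{j}_{\rho^{\dual}}=(\xi^{d-j}_{\rho})^{\perp}$ handled correctly. The only difference is cosmetic, namely that the paper dualizes the restricted characterization of $H_k$ inside $z^{d-k+1}$ whereas you dualize the ambient direct-sum condition in $E$; the content is the same.
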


Thus if the Zariski closure of $\r(\G)$ is contained in $\Sp(2n,\R)$ or $\SO(p,q)$, then $\r$ satisfies property $H_k$ if and only if it satisfies property $H_{d-k}$.

\begin{comment}
\begin{proof}
We have 
\begin{align*}
& x_{\r}^{d-k-1}\oplus (x_{\r}^{d-k+1}\cap y_{\r}^{k})\oplus (x_{\r}^{d-k+1}\cap z_{\r}^{k}) &&=x_{\r}^{d-k+1}\\
\Longleftrightarrow \quad & x_{\r^{*}}^{k+1}\cap (x_{\r^{*}}^{k-1}\oplus y_{\r^{*}}^{d-k})\cap (x_{\r^{*}}^{k-1}\oplus z_{\r^{*}}^{d-k})&& =x_{\r^{*}}^{k-1}\\
\Longleftrightarrow \quad& (x_{\r^{*}}^{k-1}\oplus x_{\r^{*}}^{k+1}\cap y_{\r^{*}}^{d-k})\cap (x_{\r^{*}}^{k-1}\oplus x_{\r^{*}}^{k+1}\cap z_{\r^{*}}^{d-k})&& =x_{\r^{*}}^{k-1}\\
\Longleftrightarrow\quad & x_{\r^{*}}^{k-1}\oplus (x_{\r^{*}}^{k+1}\cap y_{\r^{*}}^{d-k})\oplus (x_{\r^{*}}^{k+1}\cap z_{\r^{*}}^{d-k})&& =x_{\r^{*}}^{k+1}.\qedhere
\end{align*}
\end{proof}
\end{comment}

Property $H_k$ can be restated as an injectivity property of the natural projections of the boundary on the projective lines $\Pp(\quotient{x^{d-k+1}}{ x^{d-k-1}})$:
\begin{defn}\label{def.Px}
Let $\r$ be a $\{k-1,k,k+1\}-$Anosov representation, fix  a point $x\in\dg$ and set $X:=\quotient{x^{d-k+1}}{ x^{d-k-1}}$. 

The \emph{$k$--th $\R\P^1$-projection} based at $x$ is the map 
$$P_x:\dg\to \Pp(\quotient{x^{d-k+1}}{ x^{d-k-1}})$$ given by 
$$P_x(y)=\begin{cases}
\; [y^k\cap x^{d-k+1}]_X &y\neq x,\\
 \;[x^{d-k}]_X &y=x
\end{cases} $$
%
%
%
%$$\begin{array}{cccl}
%P_x:&\dg&\to &\Pp(\quotient{x^{d-k+1}}{ x^{d-k-1}})\\
%&y&\mapsto &[y^k\cap x^{d-k+1}]_X,\quad y\neq x,\\
%&x&\mapsto &[x^{d-k}]_X
%\end{array} $$
\end{defn}

\begin{prop}\label{prop.P projec injective}
A $\{k-1,k,k+1\}-$Anosov representation satisfies property $H_k$ if and only if $P_x$ is injective for all $x\in \dg$.
\end{prop}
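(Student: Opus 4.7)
The plan is a routine dimension-counting reformulation, with no genuinely hard step. First I would verify that $P_x$ is a well-defined map from $\dg$ to the projective line $\Pp(x^{d-k+1}/x^{d-k-1})$. For $y \neq x$, the $k$-Anosov hypothesis gives $y^k + x^{d-k+1} = E$ (since $y^k \oplus x^{d-k} = E$ and $x^{d-k} \subset x^{d-k+1}$), forcing $\dim(y^k \cap x^{d-k+1}) = 1$. The $(k+1)$-Anosov hypothesis yields $y^k \cap x^{d-k-1} \subseteq y^{k+1} \cap x^{d-k-1} = \{0\}$, so this one-dimensional space projects to a genuine point of the two-dimensional quotient. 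The remaining value $P_x(x) = [x^{d-k}]$ is well-defined trivially.

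Next I would dispose of the boundary case: $P_x(x) \neq P_x(y)$ for every $y \neq x$. An equality would force $y^k \cap x^{d-k+1} \subseteq x^{d-k}$, hence $y^k \cap x^{d-k} \neq \{0\}$, contradicting $k$-Anosov. This comparison is therefore automatic from the standing Anosov hypotheses, and property $H_k$ plays no role.

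The heart of the statement concerns the comparison of $P_x(y)$ with $P_x(z)$ for distinct $y, z \in \dg \setminus \{x\}$. Because the target is a projective line, distinctness of the two images is equivalent to the two lines $y^k \cap x^{d-k+1}$ and $z^k \cap x^{d-k+1}$ spanning, together with $x^{d-k-1}$, all of $x^{d-k+1}$ as a direct sum; the dimension count $1+1+(d-k-1)=d-k+1$ makes this equivalence immediate. But this is precisely the equivalent formulation \eqref{eq.char of Hk} of property $H_k$ applied to the triple $(y, z, x)$. Quantifying over $x$ and over distinct pairs $y, z$ in $\dg \setminus \{x\}$ therefore gives the stated equivalence. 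I do not expect any serious difficulty here: the argument is a pure bookkeeping exercise in which the Anosov transversalities serve only to pin down the dimensions of the relevant intersections.
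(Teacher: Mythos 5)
Your proof is correct and follows the same approach the paper takes: the paper's proof consists of the single remark that the equivalence is "essentially by definition" once one notes $k$-Anosovness already forces $P_x(y)\neq P_x(x)$ for $y\neq x$, and your write-up simply unpacks that remark, matching injectivity on pairs $y,z\neq x$ with the directness condition in Equation \eqref{eq.char of Hk} applied to the triple $(y,z,x)$ and handling the base-point comparison by transversality.
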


\begin{proof}
This is essentially by definition. One only has to note that $k-$Anosov guarantees already that $P_x(y)\neq P_x(x)$ for $y\neq x$.
\end{proof}

A representation $\r:\G\to \SL(E)$ is called \emph{strongly irreducible} if the restriction of $\r$ to any finite index subgroup $\G'<\G$ is an irreducible representation.

\begin{prop}
The subset of representations satisfying property $H_k$ is a union of connected components of strongly irreducible $\{k-1,k,k+1\}-$Anosov representations.
\end{prop}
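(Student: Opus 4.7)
\emph{Plan.} Openness of $H_k$ is Proposition~\ref{prop Hk open property}, so the task is to prove closedness within the ambient space of strongly irreducible $\{k-1,k,k+1\}$-Anosov representations. Given a convergent sequence $\rho_n\to\rho$ in this ambient with each $\rho_n$ satisfying $H_k$, I would suppose for contradiction that $\rho\notin H_k$. By Proposition~\ref{prop.P projec injective}, $P_z^\rho$ fails to be injective for some $z\in\dg$, yielding distinct $x_0,y_0\in\dg\setminus\{z\}$ with $P_z^\rho(x_0)=P_z^\rho(y_0)$.

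The bad set $T:=\{(x,y,z)\in\dg^{(3)}:P_z^\rho(x)=P_z^\rho(y)\}$ is closed (by continuity of the boundary maps), $\Gamma$-invariant (by equivariance), and nonempty. The first step is to upgrade this to $T=\dg^{(3)}$: since the action of the surface group $\Gamma$ on $\dg^{(3)}$ is topologically transitive and $T$ is cut out by a $\Gamma$-equivariant algebraic condition on the boundary curves, strong irreducibility of $\rho$ should prevent $T$ from being a proper nontrivial $\Gamma$-invariant subvariety. Hence $P_z^\rho$ is constant on $\dg\setminus\{z\}$ for every $z$, with value $\ell_z\in\P(\xi^{d-k+1}(z)/\xi^{d-k-1}(z))$. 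Lifting $\ell_z$ then produces a continuous $\rho$-equivariant map $W:\dg\to\Gr_{d-k}(E)$ with $W(z)=\xi^{d-k-1}(z)\oplus\tilde\ell_z$, distinct from $\xi^{d-k}_\rho$ at every point.

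The main obstacle is the final step: deriving a contradiction from the existence of this second equivariant boundary map $W$. Evaluating $W$ in an eigenbasis for $\rho(\gamma)$ at the repelling fixed point $z=\gamma_-$ of a hyperbolic element $\gamma\in\Gamma$ shows that $W(\gamma_-)$ is a $\rho(\gamma)$-fixed $(d-k)$-plane built from $\xi^{d-k-1}(\gamma_-)$ together with the second-largest eigendirection of $\rho(\gamma)$; this is a saddle fixed point of $\rho(\gamma)$ on $\Gr_{d-k}(E)$, not the attractor $\xi^{d-k}_\rho(\gamma_-)$. Continuity and equivariance of $W$ force $\rho(\gamma)^n W(z_0)=W(\gamma^n z_0)\to W(\gamma_+)$ for $z_0\neq\gamma_\pm$, which combined with this dynamical mismatch and strong irreducibility should produce a proper nontrivial $\rho$-invariant subspace of $E$, delivering the desired contradiction. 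The delicate point is making the passage from the alternative saddle-based boundary map $W$ to such an explicit invariant subspace.
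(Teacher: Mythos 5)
Your proposal has a fundamental structural gap: nowhere in your argument do you actually use the hypothesis that $\rho$ is the limit of a sequence $\rho_n$ satisfying property $H_k$. As written, your argument (if it worked) would show that \emph{every} strongly irreducible $\{k-1,k,k+1\}$-Anosov representation satisfies property $H_k$, which is a much stronger statement than closedness and not what is being claimed (the proposition only asserts that $H_k$ picks out a union of connected components). This is a sign that a key idea is missing, and indeed the paper's proof (following \cite[Proposition 9.3]{PSW}) hinges on exactly the step you skip: since each $P_z^{\rho_n}$ is a continuous injective, hence strictly monotone, map from the circle $\dg$ to the circle $\P(\xi^{d-k+1}_{\rho_n}(z)/\xi^{d-k-1}_{\rho_n}(z))$, and boundary maps vary continuously in the representation, the uniform limit $P_z^{\rho}$ is a \emph{weakly} monotone circle map. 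A weakly monotone continuous circle map that fails to be injective must be constant on some nonempty open arc. That is precisely the input needed to invoke \cite[Lem.\ 10.2]{Labourie-IM}: strong irreducibility of $\rho$ forbids $P_z^\rho$ from being locally constant, so $P_z^\rho$ is injective and $\rho$ satisfies $H_k$ by Proposition~\ref{prop.P projec injective}.

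The specific step in your write-up that would fail is the ``upgrade $T$ to $\dg^{(3)}$.'' Closedness, $\Gamma$-invariance, and nonemptiness of $T$ do \emph{not} imply $T=\dg^{(3)}$: the $\Gamma$-action on $\dg^{(3)}$ is properly discontinuous and very far from minimal (its quotient is a compact $3$-manifold), so $\dg^{(3)}$ has many proper closed invariant subsets. Invoking ``algebraic subvariety'' is also inapplicable here because the boundary maps $\xi^j$ are not algebraic; they are only continuous/$C^1$, and $T$ has no algebraic structure. What you actually need is that $T$ has nonempty \emph{interior}, i.e.\ that $P_z^\rho$ is constant on some open arc --- and that is exactly what the weak monotonicity from the limit argument supplies, and what Labourie's irreducibility lemma rules out. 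The final step you sketch (extracting an invariant subspace from a second equivariant map $W$) is essentially the content of that lemma, but you would only reach a position to apply it after first establishing local constancy; a single bad triple does not get you there.
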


\begin{proof}
Since $P_x:\dg\to\Pp(\quotient{x^{d-k+1}}{ x^{d-k-1}})$ is injective, the same argument as in \cite[Proposition 9.3]{PSW} implies that this is a closed condition within the set of representations for which $P_x$ is not locally constant.  The fact that $\rho$ is strongly irreducible guarantees that $P_x$ is not locally constant \cite[Lem 10.2]{Labourie-IM}. The claim follows.
\end{proof}

For property $H_k$ we get additionally the following properties for $P_x$.

\begin{prop}\label{prop.properties of proj Px}
Let $\r$ satisfy property $H_k$. Then
\begin{enumerate}
\item $P_x $ is continuous;
\item For every cyclically ordered $n$-tuple $(y_1,\ldots,y_n)\in \dg^n$ the $n$-tuple $(P_x(y_1),\ldots, P_x(y_n))$ is cyclically ordered in $\Pp(\quotient{x^{d-k+1}}{ x^{d-k-1}})$.
\end{enumerate}
\end{prop}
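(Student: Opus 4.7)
The plan is to prove (1) first, since together with injectivity (Proposition \ref{prop.P projec injective}) it will yield (2) almost immediately.

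Continuity of $P_x$ at $y \neq x$ is routine: the Anosov transversality $y^k \tv x^{d-k}$ forces $y^k \cap x^{d-k+1}$ to be one-dimensional and to meet $x^{d-k-1}$ trivially, and it depends continuously on $y$ by continuity of $\xi^k$ and $\xi^{d-k+1}$, so its class in $\Pp(x^{d-k+1}/x^{d-k-1})$ varies continuously. The delicate case is continuity at $y = x$. Here I would introduce the auxiliary map
\[
F:\dg^{(2)} \to \Gr_{d-k}(E), \qquad F(x,y) := (y^k \cap x^{d-k+1}) + x^{d-k-1},
\]
whose class in the quotient $x^{d-k+1}/x^{d-k-1}$ coincides with $P_x(y)$. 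A short dimension count (using $y^k \cap x^{d-k-1} = 0$) gives $\dim F(x,y) = d-k$, and $F$ is evidently continuous and $\r$-equivariant. The transversality hypothesis needed to apply Proposition \ref{prop:continuity} (with $d-k$ in place of $k$, using that $\r$ is also $(d-k)$-Anosov) is $F(x,y) \tv z^k$ for pairwise distinct $x,y,z$, i.e.\ that $z^k + (y^k \cap x^{d-k+1}) + x^{d-k-1} = E$ directly; after relabeling, this is exactly property $H_k$. The proposition then yields $F(x,y) \to x^{d-k}$ as $y \to x$, and passing to the quotient gives $P_x(y) = [F(x,y)] \to [x^{d-k}] = P_x(x)$.

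For (2), combining (1) with Proposition \ref{prop.P projec injective} shows that $P_x$ is a continuous injection from the compact space $\dg \simeq S^1$ into the Hausdorff space $\Pp(x^{d-k+1}/x^{d-k-1}) \simeq \R\Pp^1 \simeq S^1$, and hence is a homeomorphism onto its image. Since a compact connected subset of $S^1$ homeomorphic to $S^1$ must equal $S^1$, the map $P_x$ is in fact a homeomorphism of $S^1$, of degree $\pm 1$. Consequently $P_x$ either preserves or reverses the circular order; and since the paper's notion of \emph{cyclically ordered} is orientation-independent, $P_x$ sends cyclically ordered tuples to cyclically ordered tuples.

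The main obstacle is isolating the right auxiliary $(d-k)$-plane $F(x,y)$ to plug into Proposition \ref{prop:continuity}; the key observation is that the natural candidate whose image in the quotient recovers $P_x(y)$ has its required transversality hypothesis built out of precisely the sum that appears in property $H_k$.
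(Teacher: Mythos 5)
Your proof is correct and follows essentially the same route as the paper: for part (1), you introduce the same auxiliary map $F(x,y)=(y^k\cap x^{d-k+1})\oplus x^{d-k-1}$ and apply Proposition \ref{prop:continuity} (in the $(d-k)$-form), noting that the required transversality $F(x,y)\tv z^k$ is exactly property $H_k$; for part (2), you argue exactly as the paper that a continuous injection between topological circles is a homeomorphism and hence preserves the cyclic order. The only difference is that you spell out a few details (the dimension count for $F$, the degree-$\pm 1$ argument) that the paper leaves implicit.
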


\begin{proof}
$(1)$ The continuity at $\dg\backslash\{x\}$ follows directly from the continuity of $y\mapsto y^k$. 
 Property $H_k$ guarantees that 
$$F(x,y):=\left((y^k\cap x^{d-k+1})\oplus x^{d-k-1}\right) \tv z^k $$
for every pairwise distinct triple $x,y,z$. Since $F$ is $\rho$-equivariant, Proposition \ref{prop:continuity} implies that
$$F(x,y)=\left((y^k\cap x^{d-k+1})\oplus x^{d-k-1}\right) \to x^{d-k}$$
for $y\to x$.
If we project this to $\Pp(\quotient{x^{d-k+1}}{ x^{d-k-1}})$ we get $P_{x}(y)\to P_{x}(x)$ for $y\to x$ as desired.

$(2)$ Since $P_x$ is an injective continuous map between topological circles, it is a homeomorphism. Therefore it preserves the cyclic order on all of $\dg$.
\end{proof}

\begin{remark}\label{rem.Hk and projective cross ratio}
Note that the function on 4-tuples of points in $\dg$ given by $(x,y,z,t)\mapsto \pc(P_x(u),P_x(y),P_x(z),P_x(w))$ has  the symmetries of the projective cross ratio as in Lemma \ref{lem.symmetry of projective cro} ($(1)-(3), (8)$). By Proposition \ref{prop.properties of proj Px} it also inherits $(4)-(6), (9),(10)$ of that Lemma.
\end{remark}

\begin{cor}\label{cor.roots are positive for Hk}
If $\r$ satisfies property $H_k$, then for every $h\in\G\backslash\{e\}$ we have $\lambda_k(h_{\r})\slash \lambda_{k+1}(h_{\r})>0$.%, i.e. the $k-$th and $(k+1)-$th eigenvalue of $h_{\r}$ have the same sign.
\end{cor}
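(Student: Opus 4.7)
The plan is to combine Proposition \ref{p.gc1} (which expresses the eigenvalue ratio as a projective cross ratio in the quotient line $h_-^{d-k+1}/h_-^{d-k-1}$) with the order-preserving properties of the projection $P_{h_-}$ established in Proposition \ref{prop.properties of proj Px}, and then appeal to Lemma \ref{lem.symmetry of projective cro}(10) which says that the projective cross ratio of a cyclically ordered $4$-tuple is bigger than $1$.

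Concretely, fix a non-trivial $h\in \G$ and choose any $x\in\dg\setminus\{h_\pm\}$. Proposition \ref{p.gc1} gives
$$\frac{\lambda_k(h_\r)}{\lambda_{k+1}(h_\r)}=\pc_{h_-^{d-k-1}}\bigl(h_-^{d-k},\,x^k\cap h_-^{d-k+1},\,hx^k\cap h_-^{d-k+1},\,h_+^k\cap h_-^{d-k+1}\bigr).$$
By $\rho$-equivariance, $hx^k = (hx)^k$, and by definition of $P_{h_-}$ (Definition \ref{def.Px}) the four entries above are precisely $P_{h_-}(h_-)$, $P_{h_-}(x)$, $P_{h_-}(hx)$, $P_{h_-}(h_+)$. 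Hence
$$\frac{\lambda_k(h_\r)}{\lambda_{k+1}(h_\r)}=\pc\bigl(P_{h_-}(h_-),P_{h_-}(x),P_{h_-}(hx),P_{h_-}(h_+)\bigr).$$

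The next step is to verify that the $4$-tuple $(h_-,x,hx,h_+)$ is cyclically ordered on $\dg$. This is standard north-south dynamics: the hyperbolic element $h$ preserves each of the two arcs of $\dg\setminus\{h_\pm\}$ and pushes every point on such an arc towards $h_+$. Since $x\neq h_\pm$, the point $hx$ lies on the same arc as $x$ and strictly between $x$ and $h_+$, giving the claimed cyclic order.

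Finally, property $H_k$ enters through Proposition \ref{prop.properties of proj Px}(2): the projection $P_{h_-}$ preserves the cyclic order, so $\bigl(P_{h_-}(h_-),P_{h_-}(x),P_{h_-}(hx),P_{h_-}(h_+)\bigr)$ is cyclically ordered in $\P(h_-^{d-k+1}/h_-^{d-k-1})\cong \R\P^1$. By Lemma \ref{lem.symmetry of projective cro}(10) its projective cross ratio is strictly larger than $1$, and in particular strictly positive. There is no real obstacle here; the only ingredient that is not entirely mechanical is invoking the correct order-preserving statement, which is exactly what Proposition \ref{prop.properties of proj Px}(2) (a consequence of property $H_k$ plus continuity of $P_{h_-}$) provides.
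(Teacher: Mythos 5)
Your proof is correct and follows essentially the same route as the paper's: express $\lambda_k/\lambda_{k+1}$ via Proposition \ref{p.gc1}, observe via north-south dynamics that $(h_-,x,hx,h_+)$ is cyclically ordered, transfer the order through $P_{h_-}$ using Proposition \ref{prop.properties of proj Px}(2), and conclude with Lemma \ref{lem.symmetry of projective cro}(10). The only cosmetic difference is that the paper writes the tuple in the reversed cyclic order $(h_+,hy,y,h_-)$, which gives the same cross-ratio value.
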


\begin{proof}
For any $y\in\dg\backslash\{h_{\pm}\}$, we have that $h_+,hy,y,h_-$ are in that cyclic order on $\dg$. Thus Proposition \ref{prop.properties of proj Px} implies that
$$ P_{h_-}(h_+), P_{h_-}(hy), P_{h_-}(y), P_{h_-}(h_-)$$ are in that cyclic order on $\Pp(\quotient{h_-^{d-k+1}}{ h_-^{d-k-1}})\simeq \R\Pp^1$. Thus by Proposition \ref{p.gc1} together with  Lemma \ref{lem.symmetry of projective cro} $(10)$ it follows that 
$$
\frac{\lambda_k(h_{\r})}{\lambda_{k+1}(h_{\r})}=
\pc\left(P_{h_-}(h_+), P_{h_-}(hy), P_{h_-}(y), P_{h_-}(h_-)\right)>0. \qedhere
$$
\end{proof}

\subsection{Property $C_k$} We will need also to consider representations that satisfy a bit more transversality of the boundary maps than property $H_k$. We introduce here this new notion.

\begin{defn}
A representation $\r:\G\to \SL(E)$ satisfies \emph{property $C_k$} if it is $\{k-1,k,k+1,k+2\}-$Anosov and for pairwise distinct  $x,y,z\in \dg$ the sum 
$$x^{d-k-2} + (x^{d-k+1}\cap y^{k})+ z^{k+1}$$ 
is direct.
\end{defn}

Note that by transversality of the boundary maps property $C_k$ is equivalent to 
$$(y^k\cap x^{d-k+1})\oplus (z^{k+1}\cap x^{d-k+1})\oplus x^{d-k-2} =x^{d-k+1}$$ 
for all pairwise distinct $x,y,z\in \dg$. 
The special case of $k=1$, i.e. $x^{d-3} \oplus y^{1} \oplus z^{2}=E$, is referred to as $(1,2,3)-$hyperconvex in \cite{PSW}.

Since property $C_k$ is a transversality property on triples of points, the following is proved in the same way as Proposition \ref{prop Hk open property}.
\begin{prop}
Property $C_k$ is an open condition in $\Hom(\G,\SL(E))$.
\end{prop}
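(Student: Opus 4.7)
The plan is to mirror the proof of Proposition \ref{prop Hk open property}, decomposing property $C_k$ into an Anosov condition and a transversality condition, each of which is itself open. First, the $\{k-1, k, k+1, k+2\}$-Anosov condition is open in $\Hom(\G, \SL(E))$, since each $j$-Anosov condition is (\cite{BPS,Guichard-Wienhard-IM,KLP}). Restricting attention to this open set, the boundary maps $\xi^j_\rho$ for $j\in \{k-1, k, k+1, k+2\}$ and their complementary-dimension counterparts are all defined. Moreover, for any distinct $x, y \in \partial \G$, the intersection $\xi^{d-k+1}_\rho(x) \cap \xi^k_\rho(y)$ has dimension exactly one: a dimension count using $(k-1)$-Anosov transversality of $\xi^{k-1}_\rho(y)$ with $\xi^{d-k+1}_\rho(x)$ forces $\xi^k_\rho(y) + \xi^{d-k+1}_\rho(x) = E$, whence the intersection is one-dimensional and depends continuously on $(\rho, x, y)$.

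Next, note that since the three dimensions $d-k-2$, $1$, $k+1$ sum to $d$, directness of a sum $V^{d-k-2} + L^1 + W^{k+1}$ in $E$ is equivalent to the natural linear map $V^{d-k-2} \oplus L^1 \oplus W^{k+1} \to E$ being an isomorphism; hence directness is an open condition on the triple in the appropriate product of Grassmannians, its complement being a closed determinantal locus. To convert this pointwise openness into openness in $\rho$, I would invoke the cocompactness of the diagonal $\G$-action on pairwise distinct ordered triples in $\partial \G$ (standard for surface groups). Fixing a compact fundamental set $K$ for this action, by $\rho$-equivariance of the boundary maps it suffices to verify directness for triples in $K$.

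Suppose now that $\rho_0$ satisfies property $C_k$; then directness holds on the compact set $K$. By the joint continuity of the boundary maps in $(\rho, x)$ — which follows from the uniform estimates of Proposition \ref{p.bdrydist} with constants depending continuously on $\rho$ — together with the openness of the directness condition in Grassmannians, there is a neighbourhood of $\rho_0$ on which directness persists uniformly over $K$, and hence, by equivariance, on all pairwise distinct triples. The main (mild) subtlety is the joint continuous dependence of the boundary maps on $\rho$ and on the point of $\partial \G$; this is standard for Anosov representations but needs to be invoked carefully to make the compactness-on-$K$ argument work.
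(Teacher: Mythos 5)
Your proof is correct and follows essentially the same route as the paper, which simply notes that property $C_k$, like property $H_k$, decomposes into an Anosov condition (open) plus a transversality condition on triples and cites the standard openness argument; you have filled in the details (dimension count giving a one-dimensional intersection, cocompactness of the $\G$-action on distinct triples of $\partial\G$, and joint continuity of the boundary maps in $(\rho,x)$) that the paper leaves implicit by referring to the $H_k$ case and to Labourie.
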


Again a straight forward computation yields:

\begin{prop}
A representation $\r$ satisfies property $C_k$ if and only if $\r^{\dual}$ satisfies $C_{d-k}$.
\end{prop}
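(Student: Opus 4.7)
The proof is a direct chain of equivalences exactly parallel to the (commented-out) argument for property $H_k$ in Proposition \ref{prop Hk for dual representation}, relying only on elementary linear-algebraic duality.

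The plan is first to collect the ingredients: (i) the boundary maps of $\r^\dual$ are the annihilators of those of $\r$, that is $p^j_{\r^\dual} = (p^{d-j}_\r)^\perp$ in $E^*$; (ii) the standard identities $(V+W)^\perp = V^\perp \cap W^\perp$ and $(V\cap W)^\perp = V^\perp + W^\perp$; (iii) the modular law $(A+B)\cap C = (A \cap C) + B$ whenever $B \subseteq C$; and (iv) the fact that the Anosov property is preserved under duality and that the index set in the Anosov assumption of $C_k$ matches that of $C_{d-k}(\r^\dual)$ up to the self-duality $m \leftrightarrow d-m$ (using Remark \ref{rem ansov rep properties}).

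To compare the two transversality conditions, I would start from the equivalent form of $C_k(\r)$ recorded immediately after the definition, namely
\[
\bigl(y^k_\r \cap x^{d-k+1}_\r\bigr) \oplus \bigl(z^{k+1}_\r \cap x^{d-k+1}_\r\bigr) \oplus x^{d-k-2}_\r \;=\; x^{d-k+1}_\r.
\]
Taking annihilators in $E^*$, and using $(x^{d-k+1}_\r)^\perp = x^{k-1}_{\r^\dual}$ and $(x^{d-k-2}_\r)^\perp = x^{k+2}_{\r^\dual}$, this identity becomes the equivalent condition
\[
\bigl(y^{d-k}_{\r^\dual} + x^{k-1}_{\r^\dual}\bigr) \cap \bigl(z^{d-k-1}_{\r^\dual} + x^{k-1}_{\r^\dual}\bigr) \cap x^{k+2}_{\r^\dual} \;=\; x^{k-1}_{\r^\dual}.
\]
Since $x^{k-1}_{\r^\dual} \subseteq x^{k+2}_{\r^\dual}$, the modular law applied to each of the first two factors lets me pull $x^{k-1}_{\r^\dual}$ out of the intersections, and the resulting identity can then be rewritten as a direct-sum decomposition of $x^{k+2}_{\r^\dual}$ of exactly the same shape as the equivalent form of $C_{d-k}(\r^\dual)$ given right after its definition. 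Unwinding indices (and harmlessly relabeling the pairwise distinct triple $x,y,z$, since the condition is universally quantified) concludes the equivalence.

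The main (minor) obstacle is the bookkeeping: one must check that intermediate intersections such as $y^{d-k}_{\r^\dual} \cap x^{k+2}_{\r^\dual}$ have the expected dimensions, which follows from the ambient transversalities provided by the Anosov assumptions collected in step (iv). No new ideas beyond those used in the $H_k$ case are needed; the algebra is genuinely the same, merely carried out one level deeper in the flag.
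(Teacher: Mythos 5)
Your outline---dualize the equivalent form, take annihilators, apply the modular law, relabel the triple---is exactly the route the paper takes in its own (commented-out) computation. However, the final step hides a genuine gap: carrying out the relabeling honestly does \emph{not} land on $C_{d-k}(\r^\dual)$.

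After the modular-law reduction one is left with a direct-sum decomposition of $x^{k+2}_{\r^\dual}$, of the form $(y^{d-k}_{\r^\dual}\cap x^{k+2}_{\r^\dual})\oplus(z^{d-k-1}_{\r^\dual}\cap x^{k+2}_{\r^\dual})\oplus x^{k-1}_{\r^\dual}=x^{k+2}_{\r^\dual}$. The equivalent form of $C_j(\r^\dual)$ is a decomposition of $x^{d-j+1}_{\r^\dual}$, so matching $d-j+1=k+2$ forces $j=d-k-1$, not $j=d-k$. A second sanity check: $C_k(\r)$ involves the boundary maps of $\r$ in degrees $k,k+1,d-k-2,d-k+1$, whose annihilators are the $\r^\dual$-maps in degrees $d-k,d-k-1,k+2,k-1$---exactly the degrees appearing in $C_{d-k-1}(\r^\dual)$, not those (namely $d-k,d-k+1,k-2,k+1$) appearing in $C_{d-k}(\r^\dual)$. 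For $k=1$ this is stark: $C_{d-1}(\r^\dual)$ would demand $y^{d-1}_{\r^\dual}\cap x^{2}_{\r^\dual}=0$, which is dimensionally impossible, while $C_{d-2}(\r^\dual)$ does match $C_1(\r)$.

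So the computation you (and the paper) sketch actually proves $C_k(\r)\Longleftrightarrow C_{d-k-1}(\r^\dual)$; the index $d-k$ in the stated proposition appears to be an off-by-one slip, and the paper's own commented-out derivation already reveals it: its first line is the $C_{d-k-1}$ condition for $\r$ and its last the $C_k$ condition for $\r^\dual$. Your phrase ``unwinding indices\ldots concludes the equivalence'' is therefore not harmless bookkeeping but precisely where the argument fails to close as written; writing out the indices would have caught the mismatch. Apart from this, your use of the ambient Anosov transversalities to control dimensions in the modular-law step is correct and is indeed needed.
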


\begin{comment}
\begin{proof}
\begin{align*}
& x_{\r}^{k-1} \oplus (x_{\r}^{k+2}\cap y_{\r}^{d-k-1}) \oplus (x_{\r}^{k+2}\cap z_{\r}^{d-k}) && =x_{\r}^{k+2}\\
\Longleftrightarrow \quad & x_{\r^{*}}^{d-k+1} \cap (x_{\r^{*}}^{d-k-2}\oplus y_{\r^{*}}^{k+1}) \cap (x_{\r^{*}}^{d-k-2}\oplus z_{\r^{*}}^{k}) && =x_{\r^{*}}^{d-k-2}\\
\Longleftrightarrow \quad & (x_{\r^{*}}^{d-k-2}\oplus (x_{\r^{*}}^{d-k+1} \cap y_{\r^{*}}^{k+1})) \cap (x_{\r^{*}}^{d-k-2}\oplus (x_{\r^{*}}^{d-k+1} \cap z_{\r^{*}}^{k})) && =x_{\r^{*}}^{d-k-2}\\
\Longleftrightarrow \quad & x_{\r^{*}}^{d-k-2}\oplus (x_{\r^{*}}^{d-k+1} \cap y_{\r^{*}}^{k+1})\oplus (x_{\r^{*}}^{d-k+1} \cap z_{\r^{*}}^{k})) && =x_{\r^{*}}^{d-k+1}\qedhere
\end{align*}
\end{proof}
\end{comment}

Property $C_k$ has consequences on the following naturally defined projections on the projective planes $\Pp(\quotient{x^{d-k+1}}{ x^{d-k-2}})$:
\begin{defn}\label{def.RP2 projection} 
Let $\r:\G\to \SL(E)$ be $\{k-1,k,k+1,k+2\}-$Anosov and fix a point $x\in\dg$. 
We denote by $X=X_\r$ the three dimensional vector space $\quotient{x^{d-k+1}}{ x^{d-k-2}}$, and denote by $\calF(X)$ the space of complete flags of $X$. 

The \emph{$k$-th $\R\P^2$ projection} associated to $x$ is the flag map
$\pi_x:\dg\to\calF(X)$ given by
$$
\pi_x^{(1)}(y)=
\begin{cases}
\;[y^k\cap x^{d-k+1}]_X &y\neq x\\
\;[x^{d-k-1}]_X &y= x\\
\end{cases}
$$
$$
\pi_x^{(2)}(y)=
\begin{cases}
\;[y^{k+1}\cap x^{d-k+1}]_X &y\neq x\\
\;[x^{d-k}]_X &y= x.\\
\end{cases}
$$
%\begin{align*}
%\pi_X(y^k)&:= [y^k\cap x^{d-k+1}]_X\in\Pp(X)\quad y\neq x\\ \nonumber
%\pi_X(x^{d-k-1})&:= [x^{d-k-1}]_X\in\Pp(X)\\ \nonumber
%\pi_X(y^{k+1})&:= [y^{k+1}\cap x^{d-k+1}]_X\in\Gr_2(X)\quad y\neq x\\ \nonumber
%\pi_X(x^{d-k})&:= [x^{d-k}]_X\in\Gr_2(X)
%\end{align*}
\end{defn}

Recall from Labourie \cite{Labourie-IM} the following
\begin{defn}
A continuous curve $\xi:\dg\to\R\Pp^2$ is  \emph{hyperconvex} if $\xi(x)\oplus \xi(y)\oplus \xi(z)=\R^3$ for all pairwise distinct $x,y,z\in \dg$. 
\end{defn}
The combination of $H_k$ and $C_k$ ensures that all $k$-th $\R\P^2$-projections are  hyperconvex :

\begin{prop}\label{prop.hyperconvex proj to RP2}
If $\r:\G\to\SL(E)$  satisfies properties $H_{k}$ and $C_k$, then for each $x\in\dg$, $\pi_x^{(1)}:\dg\to\P(X)$ defines a continuous hyperconvex curve  that admits a $C^1$ parametrization with tangents  $\pi_x^{(2)}$. 
\end{prop}

\begin{proof}
We first show the transversality:
If the triple is of the form $x,y,z$, the sum $\pi_x^{(1)}(x)+\pi_x^{(1)}(y)+\pi_x^{(1)}(z)$ is direct if and only if the sum
$$  (y^k\cap x^{d-k+1})+ (z^k\cap x^{d-k+1})+ x^{d-k-1} $$
is direct, which holds as $\r$ satisfies property $H_k$.

Let now $x,y,z,w\in\dg$  be pairwise distinct. We can assume without loss of generality that the points are in that cyclic order.  We set $X':= \quotient{X}{\pi_x^{(1)}(w)}$. The claims follows if we show that the projection 
\begin{align*}
\pi_{x,w}:\dg\backslash \{x,w\}\to & \Pp (X')\simeq \R\P^1\\
u\mapsto& [\pi_x^{(1)}(u)]_{X'}
\end{align*}
restricted to any connected component of $\dg\backslash \{x,w\}$ is injective. Note that property $H_k$ guarantees that the projection is well defined. Since property $C_k$ implies that $\quotient{\pi_x^{(2)}(w)}{\pi_x^{(1)}(w)}$ is not in the image, the map $\pi_{x,w}$ is a continuous map from a topological interval to a topological interval. Hence it is enough to check local injectivity. 

By property $H_{k}$ it follows that $u\mapsto (u^k\cap x^{d-k+1})$ has $C^1-$image with tangent given by $u^{k+1}\cap x^{d-k+1}$ (Proposition \ref{pro 1-hyperconvex for property Hk}). Thus, by construction, outside $x$, the map $\pi_x^{(1)}$ has $C^1$ image with derivative $\pi_x^{(2)}$. Since,  by property $C_k$, we have 
$$[\pi_x^{(2)}(u)]_{X'}=\P(X'),$$
or equivalently $\pi_x^{(2)}(u)$ never belongs to the kernel of the projection to $X'$, it follows that the tangent space of the image of $\pi_{x,w}$ is nowhere degenerate. This implies local injectivity and thus yields transversality.

The continuity of $\pi_x$ at a point $y\in \dg\backslash\{x\}$ is clear. We are left to show that those maps extend continuously at $x$. This will be guaranteed by Proposition \ref{prop:continuity}: we define the maps 
\begin{align*}
F^{d-k+1}(x,y):=\left(y^{k}\cap x^{d-k+1}\right) \oplus x^{d-k-2}\in\Gr_{d-k-1}(V)\\
F^{d-k}(x,y):=\left(y^{k+1}\cap x^{d-k+1}\right) \oplus x^{d-k-2}\in\Gr_{d-k}(V).
\end{align*}
Property $C_{k}$ guarantees that  for every pairwise distinct triple $(x,y,z)$ it holds
\begin{align*}
z^{k+1}\tv \left(y^{k}\cap x^{d-k+1}\right) \oplus x^{d-k-2}\\
z^{k}\tv \left(y^{k+1}\cap x^{d-k+1}\right) \oplus x^{d-k-2}.
\end{align*}
Thus Proposition \ref{prop:continuity} applies to $F^{d-k+1}, F^{d-k}$ and this yields the continuity of $\pi_x$ at $x$.

Since the  map $\pi_x^{(1)}$ is the composition of the boundary map $\xi^k$, which has $C^1$ image, with a smooth projection, and we verified that the derivative of a $C^1$ parametrization of $\xi^k$ never belongs to the kernel of the projection, and has image $\pi_x^{(2)}$, it follows that $\pi_x^{(1)}$ admits a $C^1$ parametrization with tangents $\pi_x^{(2)}$ outside $x$. The analogous statement at $x$ follows by continuity.
\end{proof}

The projections $\pi_x$ are also useful to show that property $C_k$ is closed within the space of Anosov representations:
\begin{prop}
Property $C_k$ is closed among strongly irreducible $\{k-1,k,k+1,k+2\}-$Anosov representations satisfying property $H_{k}$.
\end{prop}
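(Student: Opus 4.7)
The argument parallels the proof that $H_k$ is closed: we reinterpret $C_k$ as the injectivity of an auxiliary $\R\Pp^1$-valued projection and rule out the only possible limit degeneration using strong irreducibility. Suppose for contradiction that $\rho_n \to \rho_\infty$ with each $\rho_n$ in the target class satisfying $C_k$ but with $\rho_\infty$ failing $C_k$ at some pairwise distinct triple $(x,y,z)$, so that, writing $X = x^{d-k+1}/x^{d-k-2}$ for $\rho_\infty$, one has $\pi_X(y^k)\subset \pi_X(z^{k+1})$. Following the second half of the proof of Proposition \ref{prop.hyperconvex proj to RP2}, for any representation $\rho$ in the target class, property $C_k$ is equivalent to the statement that, for every $x \neq z$ in $\dg$, the auxiliary projection
\[
\pi_{x,z}^\rho:\dg\setminus\{x,z\}\to\Pp\bigl(X_\rho/\pi_{X_\rho}(z^k)\bigr)\simeq \R\Pp^1,\qquad u\mapsto [\pi_{X_\rho}(u^k)],
\]
is injective on each of the two connected components of its domain.

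For each $n$, $\pi_{x,z}^{\rho_n}$ arises by central projection, from $\pi_{X_{\rho_n}}(z^k)$, of the hyperconvex curve $\pi_{X_{\rho_n}}$ of Proposition \ref{prop.hyperconvex proj to RP2}; hence it is an order-preserving homeomorphism of each connected component onto an arc of $\R\Pp^1$ bounded by $T_z^{(n)}:=[\pi_{X_{\rho_n}}(z^{k+1})/\pi_{X_{\rho_n}}(z^k)]$ and the analogous $x$-value. The pointwise limit $\pi_{x,z}^{\rho_\infty}$ is therefore monotone, possibly non-strictly. Failure of $C_k$ at $(x,y,z)$ for $\rho_\infty$ is precisely the equality $\pi_{x,z}^{\rho_\infty}(y)=T_z^{(\infty)}$; this value also coincides with $\lim_{u\to z}\pi_{x,z}^{\rho_\infty}(u)$, since by Proposition \ref{pro 1-hyperconvex for property Hk} the tangent to the $C^1$ curve $\pi_X$ at $\pi_X(z^k)$ is $\pi_X(z^{k+1})$. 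Monotonicity then forces $\pi_{x,z}^{\rho_\infty}$ to be constant on the sub-arc of the relevant connected component of $\dg\setminus\{x,z\}$ running from $y$ to $z$; equivalently, on an open set $U\subset\dg$ the intersections $u^k \cap x^{d-k+1}$ all lie in a fixed proper subspace $\tilde L\subsetneq x^{d-k+1}$.

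The concluding step, which is the main obstacle, is to exclude this local constancy of the boundary curve via strong irreducibility, following the template of \cite[Lem.~10.2]{Labourie-IM}. A case analysis according to whether the distinguished direction $\pi_X(x^{d-k-1})$ lies on the projective line $L \subset \Pp(X)$ to which $\pi_X(u^k)$ is confined for $u\in U$ suffices: in the affirmative case, $P_x$ itself becomes locally constant on $U$, contradicting the injectivity of $P_x$ guaranteed by $H_k$ (Proposition \ref{prop.P projec injective}); otherwise, one combines equivariance of the boundary maps with a Labourie-type density argument to produce a proper $\rho_\infty(\G)$-invariant subspace of $E$, contradicting strong irreducibility.
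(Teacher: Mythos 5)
Your overall strategy matches the paper's: assume $\rho_n\to\rho_\infty$ with $C_k$ failing at a triple $(x,y,z)$ in the limit, use the order structure inherited from the $\rho_n$ to conclude that $\pi_X(u^k_{\rho_\infty})$ is confined to a fixed projective line for an open set of $u$, and then contradict strong irreducibility via the density argument of Labourie \cite[Lem.~10.2]{Labourie-IM}. The paper, however, does not take a pointwise limit of the $\R\Pp^1$-valued maps directly; instead it works with the scalar cross ratio $\gc^X_1$ and carefully chosen auxiliary points $u,v,w_0$, showing the inequalities $\gc^{X_{\rho_n}}_1(\cdots)>1$ survive in the limit and force $\gc^X_1=\infty$ on an open set. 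Your monotonicity-of-the-limit argument is morally the same, but it hides two issues that the cross-ratio formulation resolves automatically.

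First, the target spaces $\Pp\bigl(X_{\rho_n}/\pi_{X_{\rho_n}}(z^k)\bigr)$ vary with $n$, so asserting that the ``pointwise limit'' of the $\pi_{x,z}^{\rho_n}$ is weakly monotone requires choosing consistent identifications and tracking a coherent cyclic orientation. This is not impossible, but it is exactly the subtlety that the paper circumvents by recasting the monotonicity as a cross-ratio inequality (a scalar), for which continuity in $n$ is immediate after the reduction to $\gc_{d-k-1}$. Second, your constancy step from $\pi_{x,z}^{\rho_\infty}(y)=T_z^{(\infty)}=\lim_{u\to z}\pi_{x,z}^{\rho_\infty}(u)$ needs the image to lie in a \emph{proper} arc with $T_z^{(\infty)}$ as an endpoint; if the two arc endpoints $T_z^{(\infty)}$ and $[\pi_{X}(x^{d-k-1})]$ collapse, a weakly monotone circle map can close up without being constant. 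The paper avoids this by first establishing that no open set of boundary points projects into $\Pp(\pi_X(z^{k+1}))$ (or its dual counterpart), and then selecting auxiliary points $u,v$ with the required transversalities before running the limiting argument.

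A smaller point: the equivalence of $C_k$ with injectivity of $\pi_{x,z}$ is overstated (only the implication $H_k\wedge C_k\Rightarrow$ hyperconvexity $\Rightarrow$ injectivity is established, and it is all you actually use). And the concluding case analysis is an unnecessary detour: in both of your cases the open-set degeneration is an open set of boundary points non-transverse to a fixed proper subspace of $E$, which is exactly what Labourie's lemma rules out for strongly irreducible Anosov representations; the separate appeal to injectivity of $P_x$ in the first case, while correct, adds nothing.
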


\begin{proof}
Let $\{\r_n\}$ be a sequence of $\{k-1,k,k+1,k+2\}-$Anosov representations satisfying property $H_{k}$ and $C_k$ converging to $\r_0$, a strongly irreducible $\{k-1,k,k+1,k+2\}-$Anosov representation satisfying property $H_{k}$.
Denote, as above, $X_{\rho_0}:=\quotient{x_{\r_0}^{d-k+1}}{x_{\r_0}^{d-k-2}}$. 

Since $\r_0$ is strongly irreducible, it follows by \cite[Lem 10.2]{Labourie-IM} that, for fixed $z\neq x$, the set of $u\in\dg$ for which 
\begin{align}\label{eq.proof Ck closed0}
\pi_{x_{\rho_0}}^{(1)}(u)\ntv \pi_{x_{\rho_0}}^{(2)}(z)\quad\text{or}\quad \pi_{x_{\rho_0}}^{(2)}(u)\ntv \pi_{x_{\rho_0}}^{(1)}(z)
\end{align}
doesn't contain open intervals: indeed the two conditions are equivalent respectively to 
\begin{align*}
u_{\r_0}^k\ntv \left(x_{\r_0}^{d-k-2}\oplus (z_{\r_0}^{k+1}\cap x_{\r_0}^{d-k+1})\right), \quad u_{\r_0}^{k+1}\ntv \left(x_{\r_0}^{d-k-2}\oplus (z_{\r_0}^{k}\cap x_{\r_0}^{d-k+1})\right),
\end{align*}
conditions that, by \cite[Lem 10.2]{Labourie-IM} violate strong irreducibility if happening on an open intervals.

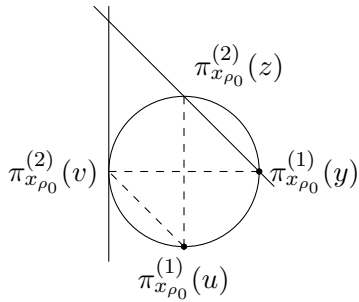
\begin{figure}[h]
\begin{tikzpicture}
\draw (0,0) circle [radius=1cm];
\filldraw (1,0) circle [radius=1pt] node [right] {$\pi_{x_{\rho_0}}^{(1)}(y)$};
\filldraw (0,1) circle [radius=0pt] node [above right] {$\pi_{x_{\rho_0}}^{(2)}(z)$};
\filldraw (-1,0) circle [radius=0pt] node [left] {$\pi_{x_{\rho_0}}^{(2)}(v)$};
\filldraw (0,-1) circle [radius=1pt] node [below] {$\pi_{x_{\rho_0}}^{(1)}(u)$};
\draw (-1,-1.2) to (-1, 2.2);
\draw(-1.2,2.2) to (1.2,-.2);
\draw (1,0) to (-1,0) [dashed];
%\node at (0.75, 0) [below] {$\tv$};
\draw (0,1) to (0,-1) [dashed];
%\node at (0,0.75) [left] {$\tv$};
\draw (0,-1) to (-1,0) [dashed];
%\node at (-0.5,-0.5) [left] {$\tv$};
\end{tikzpicture}
\caption{Sample situation of Equation (\ref{eq.proof Ck closed}). The dashed lines encode transversality}
\end{figure}

As a result, for any distinct triple  $x,y,z\in\dg$ we can  find $u,v\in \dg\backslash\{x\}$ such that $y,z,v,u$ are cyclically ordered and
\begin{align}\label{eq.proof Ck closed}
\pi_{x_{\rho_0}}^{(1)}(y)\tv \pi_{x_{\rho_0}}^{(2)}(v), \; \pi_{x_{\rho_0}}^{(1)}(u)\tv \pi_{x_{\rho_0}}^{(2)}(z),\; 
\pi_{x_{\rho_0}}^{(1)}(u)\tv \pi_{x_{\rho_0}}^{(2)}(v).
\end{align}
Indeed we find $u$ in the interval\footnote{Recall from Notation \ref{n.interval} that $(u,x)_y$ is the interval in $\dg\backslash\{u,x\}$ not containing $y$} $(y,x)_z$ such that $\pi_{x_{\rho_0}}^{(1)}(u)\tv \pi_{x_{\rho_0}}^{(2)}(z)$ as otherwise this would contradict Equation \eqref{eq.proof Ck closed0}. Since transversality is an open condition, Equation \eqref{eq.proof Ck closed0} implies that we find an open set $\hat{U}\subset (u,x)_y$ such that for all $w\in\hat{U}$ we have $\pi_{x_{\rho_0}}^{(1)}(y)\tv \pi_{x_{\rho_0}}^{(2)}(w)$. Moreover Equation \eqref{eq.proof Ck closed0} implies also that the set of points in the interval $w\in (u,x)_y$ such that $\pi_{x_{\rho_0}}^{(1)}(u)\tv\pi_{x_{\rho_0}}^{(2)}(w)$ is dense in $\hat{U}$. Therefore we find the $v\in\hat{U}$ as desired.

Set $X_{\r_n}:=\quotient{ x^{d-k+1}_{\r_n}}{x^{d-k-2}_{\r_n}}$. Proposition \ref{prop.hyperconvex proj to RP2} gives that, for fixed $n$, the  curve $ \pi_{x_{\rho_n}}^{(1)}:\dg\to \P(X_{\r_n})$ is 
 hyperconvex; as a result, for all cyclically ordered quadruples $a,b,c,d\in\dg\backslash\{x\}$,
$$\gc^{X_{\r_n}}_1(\pi_{x_{\rho_n}}^{(1)}(a),\pi_{x_{\rho_n}}^{(2)}(b),\pi_{x_{\rho_n}}^{(2)}(c),\pi_{x_{\rho_n}}^{(1)}(d))>1.$$ 
 Here $\gc^{X_{\r_n}}_1$ is the cross ratio of points and hyperplanes of ${X_{\r_n}}$ as in Definition \ref{def.k cross ratio}.

Furthermore, it follows from the continuity of $\gc_{d-k-1}$ that if $y,z,v,u$ satisfy Equation \eqref{eq.proof Ck closed}, we have 
\begin{align}\label{eq.proof Ck closed-2}
&\gc^{X_{\r_n}}_1(\pi_{x_{\rho_n}}^{(1)}(y),\pi_{x_{\rho_n}}^{(2)}(z),\pi_{x_{\rho_n}}^{(2)}(v),\pi_{x_{\rho_n}}^{(1)}(u))\\
\to &\gc^{X_{\r_0}}_1(\pi_{x_{\rho_0}}^{(1)}(y),\pi_{x_{\rho_0}}^{(2)}(z),\pi_{x_{\rho_0}}^{(2)}(v),\pi_{x_{\rho_0}}^{(1)}(u))\nonumber
\end{align}
as $n\to \infty$. Indeed we can choose  $V^{k-1}\in \Gr_{d-k-1}(E)$ transverse to $x_{\r_n}^{d-k+1}$ for all $n$ big enough such that (see Proposition \ref{prop.k-cross ratio and projection})
$$\gc^{X_{\r_n}}_1(\pi_{x_{\rho_n}}^{(1)}(y),\pi_{x_{\rho_n}}^{(2)}(z),\pi_{x_{\rho_n}}^{(2)}(v),\pi_{x_{\rho_n}}^{(1)}(u))=\gc_{d-k-1}\left(A_n,B_n,C_n,D_n \right)$$
for
\begin{align*}
&A_n:=(a_{\r_n}^k\cap x_{\r_n}^{d-k+1})\oplus x_{\r_n}^{d-k-2},\quad B_n:=(b_{\r_n}^{k+1}\cap x_{\r_n}^{d-k+1})\oplus V^{k-1},\\
&C_n:=(c_{\r_n}^{k+1}\cap x_{\r_n}^{d-k+1})\oplus V^{k-1},\quad D_n:=(d_{\r_n}^k\cap x_{\r_n}^{d-k+1})\oplus x_{\r_n}^{d-k-2}.
\end{align*}

Assume now that $\r_0$ does not satisfy property $C_k$, let $x,y,z\in\dg$ be pairwise distinct points violating property $C_k$ for $\r_0$, and choose $u,v$ satisfying Equation  \eqref{eq.proof Ck closed}. The failure of transversality implies that 
$$\gc^{X_{\r_0}}_1(\pi_{x_{\rho_0}}^{(1)}(y),\pi_{x_{\rho_0}}^{(2)}(z),\pi_{x_{\rho_0}}^{(2)}(v),\pi_{x_{\rho_0}}^{(1)}(u))=\infty.$$

Since transversality is an open condition,  there is $w_0\in\dg$ such that $y,w_0,z,v,u\in\dg\backslash\{0\}$ are in that cyclic order and $\pi_{x_{\rho_0}}^{(1)}(w)\tv \pi_{x_{\rho_0}}^{(2)}(v)$ for all $w\in(w_0,y)_z$, so that $\gc^{X_{\r_0}}_1(\pi_{x_{\rho_0}}^{(1)}(w),\pi_{x_{\rho_0}}^{(2)}(z),\pi_{x_{\rho_0}}^{(2)}(v),\pi_{x_{\rho_0}}^{(1)}(u))$
is defined for all such $w$. Then the monotonicity of the cross ratio associated to $\pi_{x_{\rho_n}}$, implying that 
\begin{align*}
&\gc^{X_{\r_n}}_1(\pi_{x_{\rho_n}}^{(1)}(w),\pi_{x_{\rho_n}}^{(2)}(z),\pi_{x_{\rho_n}}^{(2)}(v),\pi_{x_{\rho_n}}^{(1)}(u))\\
>&\gc^{X_{\r_n}}_1(\pi_{x_{\rho_n}}^{(1)}(y),\pi_{x_{\rho_n}}^{(2)}(z),\pi_{x_{\rho_n}}^{(2)}(v),\pi_{x_{\rho_n}}^{(1)}(u))
\end{align*}
for all $w\in(w_0,y)_z$, implies that also 
$$\gc^{X_{\r_0}}_1(\pi_{x_{\rho_0}}^{(1)}(w),\pi_{x_{\rho_0}}^{(2)}(z),\pi_{x_{\rho_0}}^{(2)}(v),\pi_{x_{\rho_0}}^{(1)}(u))=\infty.$$
which in turn yields that property $C_k$ is violated on the whole open interval  $(w_0,y)_z\subset \dg$, a contradiction to strong irreducibility.
\end{proof}

\subsection{Reducible representations and Fuchsian locii}
We will now discuss properties $H_k$ and $C_k$ for reducible representations. The following is well known and easy to check:
\begin{lem}
Let $\rho:\G\to\SL(L_1\oplus L_2)$ be reducible. If $\rho$ is $k$-Anosov, then the dimension $k_i$ of the intersection $x_i^{k_i}:=x^{k}\cap L_i$ is constant, $x^k=x_1^{k_1}\oplus x_2^{k_2}$, the restriction $\rho|_{L_i}$ is $k_i$-Anosov, and $x\mapsto x_i^{k_i}$ is the associated boundary map. 
\end{lem}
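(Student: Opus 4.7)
The strategy is to verify the splitting $x^k=x_1^{k_1}\oplus x_2^{k_2}$ at attracting fixed points of infinite order elements, where the block-diagonal structure of $\r$ forces it, then propagate it to all of $\dg$ using $\G$-minimality together with upper semi-continuity of intersection dimensions, and finally transfer the Anosov property to each factor via candidate boundary maps.

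Since $\r$ preserves the decomposition, each $\r(g)$ is block diagonal, so its generalized eigenspaces lie in $L_1$ or $L_2$. For any infinite order $g\in\G$, dynamics preservation of the boundary map identifies $\xi^k(g_+)$ with the span of the top $k$ generalized eigenspaces of $\r(g)$, which therefore splits as $(\xi^k(g_+)\cap L_1)\oplus(\xi^k(g_+)\cap L_2)$. I would then define $a(x):=\dim(\xi^k(x)\cap L_1)$: this is upper semi-continuous (intersection dimension with a fixed subspace is upper semi-continuous on $\Gr_k(E)$, and $\xi^k$ is continuous) and $\G$-invariant (as $\r(\gamma)L_1=L_1$ and $\xi^k$ is equivariant). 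Since the $\G$-action on $\dg$ is minimal, the orbit through any maximizer of $a$ is dense, and upper semi-continuity forces $a\equiv k_1$ for some non-negative integer $k_1$; the same argument yields $b(x):=\dim(\xi^k(x)\cap L_2)\equiv k_2$. As attracting fixed points are dense in $\dg$ and satisfy $a+b=k$ there, we conclude $k_1+k_2=k$, whence $x^k=(x^k\cap L_1)\oplus(x^k\cap L_2)$ for every $x\in\dg$.

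For the Anosov claim, I would fix an inner product on $E$ making $L_1\perp L_2$ (the Anosov property being insensitive to this choice); then the singular values of $\r(\gamma)$ form the sorted union of those of the restrictions, and the Cartan attractor splits as $U_k(\r(\gamma))=U_{m(\gamma)}(\r|_{L_1}(\gamma))\oplus U_{k-m(\gamma)}(\r|_{L_2}(\gamma))$ for some integer $m(\gamma)\in\{0,\ldots,k\}$. Along a geodesic ray $(\gamma_n)\to x$, the convergence $U_k(\r(\gamma_n))\to x^k$ combined with the constant intersection dimensions $k_1,k_2$ forces $m(\gamma_n)=k_1$ eventually, and identifies the candidate boundary map of $\r|_{L_i}$ as $x\mapsto x^k\cap L_i$. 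Repeating the argument with $\xi^{d-k}$ provides transverse, continuous, equivariant, dynamics-preserving limit maps for $\r|_{L_i}$ at levels $k_i$ and $\dim L_i-k_i$; the $k_i$-Anosov property of $\r|_{L_i}$ then follows from the standard characterization of Anosov representations of hyperbolic groups via such limit maps (as in \cite{KLP}).

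The main obstacle is precisely this last step: upgrading pointwise convergence of singular vectors along a single ray into the uniform exponential singular value gap demanded by the BPS definition. The cleanest way to bypass it is the limit-map characterization just invoked; a more direct route through estimates of singular values of block-diagonal matrices combined with the quasi-isometric embedding property of $\r$ is possible but substantially more delicate.
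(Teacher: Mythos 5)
Your argument is correct, and the first half closely parallels the paper's: the paper also establishes the splitting at fixed points (via dynamics-preservation and block-diagonality) and propagates it using upper semi-continuity of $x\mapsto\dim(\xi^k(x)\cap L_i)$; the only cosmetic difference is that you appeal to minimality of the $\G$-action on $\dg$ to obtain constancy, whereas the paper uses density of fixed points together with the constraint $\dim(\xi^k(x)\cap L_1)+\dim(\xi^k(x)\cap L_2)\le k$. Both routes work.

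Where you genuinely diverge from the paper is in deducing that $\rho|_{L_i}$ is $k_i$-Anosov. The paper estimates the singular-value gap directly, asserting $\sigma_{k_i}/\sigma_{k_i+1}(\rho|_{L_i}(\gamma))\ge\sigma_k/\sigma_{k+1}(\rho(\gamma))$; you instead verify the limit maps and invoke the boundary-map characterization. Two remarks on your route. First, the characterizations in KLP and in Gu\'eritaud--Guichard--Kassel--Wienhard require, besides continuous transverse dynamics-preserving limit maps, a divergence (regularity) hypothesis --- precisely that $\sigma_{k_i}/\sigma_{k_i+1}(\rho|_{L_i}(\gamma))\to\infty$ --- so the uniformity you hope to bypass reappears in the hypotheses of the theorem you are invoking. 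Second, and more importantly, the obstacle you flag (``upgrading pointwise convergence of singular vectors along a single ray into the uniform exponential singular value gap'') is already handled by the paper's Proposition giving $d(\xi^k(x),U_k(\rho(\g_n)))\le Ce^{-\mu n}$ \emph{uniformly} over geodesic rays from the identity. Combined with compactness of $\xi^k(\dg)$ and upper semi-continuity of $V\mapsto\dim(V\cap L_i)$, this yields an $N$ such that every $\g$ with $|\g|_\G\ge N$ has split index $m(\g)=k_1$. Once $m(\g)=k_1$, the inequality $\sigma_{k_1}/\sigma_{k_1+1}(\rho|_{L_1}(\g))\ge\sigma_k/\sigma_{k+1}(\rho(\g))$ is immediate from sorting the union of the two factors' singular values, and the Anosov estimate for $\rho|_{L_i}$ follows with the same exponent $\mu$ after adjusting $c$ to absorb the finitely many short elements. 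So the direct route is in fact short given the tools of Section 2, and avoids citing an external characterization whose hypotheses still need checking.
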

\begin{proof}
First note that, since the subspaces $L_i$ are invariant, for all $\g\in \G$ the subspace $\g^k_+$ splits as the direct sum $(\g^k_+\cap L_1) \oplus (\g^k_+\cap L_2)$. As the functions $y\mapsto \dim(y^k\cap L_i)$ are both upper-semicontinuous and the set of fixed points is dense in $\dg$, it follows that those maps are constant. This yields the decomposition $x^k=x_1^{k_1}\oplus x_2^{k_2}$ everywhere. Since $\xi^k$ is dynamics preserving, we get that the maps $x\mapsto x_i^{k_i}$ are also dynamics preserving. Moreover it follows easily that $k_i-$th root gap of the representation $\rho|_{L_i}$ is not smaller than the $k-$th root gap of $\rho$. Thus $\rho|_{L_i}$ is $k_i$-Anosov with boundary map $x\mapsto x_i^{k_i}$.
\end{proof}
\begin{prop}\label{p.red1}
Let $\rho:\G\to\SL(L_1\oplus L_2)$ be $\{k-1,k,k+1\}-$Anosov and reducible. Assume that $(k-1)_1=k_1-1$. Then $\rho$ has property $H_k$ if and only if $(k+1)_1=k_1+1$ and $\rho|_{L_1}$ has property $H_{k_1}$. 
\end{prop}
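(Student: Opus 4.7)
My plan is to exploit the fact that, for a reducible representation, every boundary map $\xi^\ell$ (at an index $\ell$ where $\rho$ is Anosov) decomposes as $\xi^\ell(x) = \xi^\ell_1(x)\oplus\xi^\ell_2(x)$ with constant dimensions $\ell_i := \dim(\xi^\ell(x)\cap L_i)$, and that $\rho|_{L_i}$ is then $\ell_i$-Anosov with boundary map $x\mapsto\xi^\ell_i(x)$ (this is the preceding lemma). Combining this splitting with the standard transversality $\xi^\ell(x)\oplus\xi^{d-\ell}(y) = E$ read off inside each $L_i$ forces $(d-\ell)_i = d_i - \ell_i$. Under the hypothesis $(k-1)_1 = k_1-1$ (so $(k-1)_2 = k_2$), this yields $(d-k+1)_1 = d_1-k_1+1$ and $(d-k+1)_2 = d_2-k_2$. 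Moreover, from $(k+1)_1 + (k+1)_2 = k+1$ and $(k+1)_i\geq k_i$, exactly one of $(k+1)_1 = k_1+1$ or $(k+1)_2 = k_2+1$ holds, and this determines the $(d-k-1)_i$.

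Next I would rewrite property $H_k$ in the equivalent form \eqref{eq.char of Hk}, namely
$$\bigl(x^k\cap z^{d-k+1}\bigr) \oplus \bigl(y^k\cap z^{d-k+1}\bigr) \oplus z^{d-k-1} \;=\; z^{d-k+1}.$$
Each of the four subspaces appearing is a direct sum of its intersections with $L_1$ and $L_2$, so the identity is equivalent to the conjunction of the two analogous identities inside $L_1$ and $L_2$. Inspecting the $L_2$-part: transversality of $\rho|_{L_2}$ at index $k_2$ gives $x_2^{k_2}\cap z_2^{d_2-k_2} = 0$, so the $L_2$-identity reduces to $z_2^{(d-k-1)_2} = z_2^{(d-k+1)_2}$. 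This forces $(d-k-1)_2 = d_2-k_2$, i.e.\ $(k+1)_1 = k_1+1$.

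Assuming $(k+1)_1 = k_1+1$, $\rho|_{L_1}$ is automatically $\{k_1-1, k_1, k_1+1\}$-Anosov, and a dimension count inside $L_1$ (using transversality of $\rho|_{L_1}$ at index $k_1-1$) shows that $x_1^{k_1}\cap z_1^{d_1-k_1+1}$ is one-dimensional for $x\neq z$. The surviving $L_1$-part of the identity then reads exactly as the defining condition of property $H_{k_1}$ for $\rho|_{L_1}$. Conversely, these two conditions trivially recombine to give property $H_k$ for $\rho$. I do not anticipate any serious obstacle; the only mildly delicate step is extracting the dichotomy $(k+1)_1=k_1+1$ versus $(k+1)_2=k_2+1$ and ruling out the latter, but this is essentially forced by the dimension count in the $L_2$-identity.
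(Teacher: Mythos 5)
Your proof is correct and follows essentially the same route as the paper's: the key observation that $x^k\cap z^{d-k+1}=x_1^{k_1}\cap z_1^{d_1-k_1+1}\subset L_1$ (forced by $(k-1)_2=k_2$, hence $(d-k+1)_2=d_2-k_2$), followed by reading the transversality condition of property $H_k$ inside each factor. Your presentation is slightly more explicit in deriving the dichotomy $(k+1)_1=k_1+1$ from a dimension count in the $L_2$-part, but the structure of the argument matches the paper's, which writes the condition as the directness of $x^{k-1}+\bigl(x^k\cap z^{d-k+1}+y^k\cap z^{d-k+1}\bigr)+z^{d-k-1}$ and extracts the same two consequences.
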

\begin{proof}
Let $d_1=\dim L_1$.
Observe that, under our assumption $x^k\cap z^{d-k+1}=x_1^{k_1}\cap z_1^{d_1-k_1+1}\subset L_1$. As a result the sum 
$$x^{k-1}+\left(x^k\cap z^{d-k+1}+y^k\cap z^{d-k+1}\right)+z^{d-k-1}$$
can only be direct if $L_2\subset x^{k-1}+ z^{d-k-1}$ and $\rho|_{L_1}$ has property $H_{k_1}$. The converse implication is clear. 
\end{proof}
The analogue statement for property $C_k$ is proven in the same way. We state it for future reference:
\begin{prop}\label{p.red2}
Let $\rho:\G\to\SL(L_1\oplus L_2)$ be reducible and $\{k-1,k,k+1,k+2\}-$Anosov. Assume that $(k-1)_1=k_1-1$. If $(k+1)_1=k_1+1$,  $(k+2)_1=k_1+2$ and $\rho|_{L_1}$ has property $C_{k_1}$, then $\rho$ has property $C_k$ . \end{prop}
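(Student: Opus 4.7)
The plan is to reduce property $C_k$ for $\rho$ to property $C_{k_1}$ for $\rho|_{L_1}$, by decomposing every subspace involved along $L_1 \oplus L_2$ and checking that the $L_2$--component of the sum is trivial for geometric reasons, while the $L_1$--component is exactly the content of the hypothesis.

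First I would set up the bookkeeping. Write $(j)_i := \dim(\xi^j(x) \cap L_i)$ whenever $\rho$ is $j$--Anosov; the previous lemma asserts that this is a well-defined constant and that $\xi^j(x) = (\xi^j(x)\cap L_1) \oplus (\xi^j(x) \cap L_2)$. By the hypotheses together with $(k-1)_1 = k_1 - 1$, we obtain $(j)_1 - (j-1)_1 = 1$ for $j \in \{k,k+1,k+2\}$, and consequently $(j)_2 = k_2$ for all $j \in \{k-1,k,k+1,k+2\}$. Applying the transversality $\xi^j(x)\oplus\xi^{d-j}(y) = E$ componentwise on $L_1$ and $L_2$ then gives $(d-j)_1 = d_1 - (j)_1$; in particular
\[
(d-k-2)_1 = d_1 - k_1 - 2, \quad (d-k+1)_1 = d_1 - k_1 + 1, \quad (d-j)_2 = d_2 - k_2
\]
for $j \in \{k-1,k,k+1,k+2\}$. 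Moreover $\rho|_{L_1}$ is $\{k_1-1, k_1, k_1+1, k_1+2\}$--Anosov, and $\rho|_{L_2}$ is $k_2$--Anosov with boundary map $x \mapsto \xi^j(x)\cap L_2$ for every relevant $j$.

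Next, using that $\rho$ is $(k+1)$--Anosov, property $C_k$ is equivalent to the requirement that for pairwise distinct $x,y,z\in\bord\Gamma$,
\[
x^{d-k-2} \;\oplus\; \bigl(y^k\cap x^{d-k+1}\bigr) \;\oplus\; \bigl(z^{k+1}\cap x^{d-k+1}\bigr) \;=\; x^{d-k+1}.
\]
I would decompose each of the three summands along $L_1 \oplus L_2$. On the $L_2$--side, the dimension tally gives
\[
x^{d-k-2}\cap L_2 \;=\; \xi^{d_2-k_2}_{2}(x) \;=\; x^{d-k+1}\cap L_2,
\]
while $(y^k\cap x^{d-k+1})\cap L_2 = \xi^{k_2}_2(y)\cap \xi^{d_2-k_2}_2(x)$ and $(z^{k+1}\cap x^{d-k+1})\cap L_2 = \xi^{k_2}_2(z)\cap \xi^{d_2-k_2}_2(x)$, both of which vanish by the transversality of the boundary maps of the $k_2$--Anosov representation $\rho|_{L_2}$ at distinct points. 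Hence the $L_2$--part of the sum is automatically direct and equals $x^{d-k+1}\cap L_2$.

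On the $L_1$--side, the three components read
\[
\xi^{d_1-k_1-2}_1(x), \qquad \xi^{d_1-k_1+1}_1(x)\cap\xi^{k_1}_1(y), \qquad \xi^{d_1-k_1+1}_1(x)\cap\xi^{k_1+1}_1(z),
\]
and their sum being direct and equal to $\xi^{d_1-k_1+1}_1(x) = x^{d-k+1}\cap L_1$ is exactly property $C_{k_1}$ for $\rho|_{L_1}$, which is the hypothesis. Combining the two sides along the decomposition $x^{d-k+1} = (x^{d-k+1}\cap L_1) \oplus (x^{d-k+1}\cap L_2)$ yields the desired identity, and therefore property $C_k$ for $\rho$. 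There is no analytic obstacle here; the only care needed is the index bookkeeping of the previous paragraph.
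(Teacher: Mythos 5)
Your proof is correct and follows essentially the same approach as the paper: the paper's argument for the analogous $H_k$ statement (which it says carries over "in the same way") rests on the observation that the intersection terms $y^k\cap x^{d-k+1}$ and $z^{k+1}\cap x^{d-k+1}$ have trivial $L_2$-component by transversality of the $\rho|_{L_2}$ boundary maps, which is precisely your key step. You have merely written out the dimension bookkeeping more explicitly, but the underlying reduction to $C_{k_1}$ for $\rho|_{L_1}$ via the $L_1\oplus L_2$ decomposition is the same.
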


Observe that if $(k-1)_1\neq k_1-1$, then necessarily $(k-1)_2=k_2-1$, so Proposition \ref{p.red1} and Proposition \ref{p.red2} can be applied to $L_2$.

An easy way to obtain many examples of Anosov representations is to deform representations in the so-called Fuchsian loci, the set of representations obtained as composition of holonomies of hyperbolizations with $\SL(\R^2)$-representations. We fix here the notation for such representations:
\begin{example}[Fuchsian Loci]\label{ex 	hyperconvex reps}
Denote by $\tau_{d_i}:\SL(\R^2)\to \SL(\R^{d_i})$ the $d_i-$dimensional irreducible representation\footnote{this is uniquely defined up to conjugation} of $\SL(\R^2)$ and set $$\tau_{(d_1,\ldots,d_j)}:=\tau_{d_1}\oplus \ldots\oplus \tau_{d_j}$$ for positive  integers $d_1\geq d_2 \geq \ldots\geq d_j$ with $d=d_1+\ldots+d_j$.
Let $\r_{hyp}:\G\to \SL(\R^2)$ be a discrete and faithful representation. We call the set of representations obtained as composition $\tau_{\un d}\circ \r_{hyp}$ as $\r_{hyp}$ varies in the Teichm\"uller space a \emph{Fuchsian locus}, or the $\un d$-Fuchsian locus for the specified multi-index $\un d=(d_1,\ldots,d_j)$. 
\end{example}

It is easy to verify that the irreducible representation satisfies property $H_k, C_k$ for all $k$, as in this case the equivariant boundary map is the well studied Veronese curve. As a result we obtain:
\begin{cor}
In the notation of Example \ref{ex 	hyperconvex reps}, a representation of the form $\rho=\tau_{(d_1,\ldots, d_m)}\circ \r_{hyp}$ has property $H_k$ if and only if $d_1-d_2>2k$; if $d_1-d_2>2k+2$ the representation additionally has property $C_k$ .
\end{cor}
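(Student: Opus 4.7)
The plan is to apply the reducibility propositions above after grouping the summands as $E = L_1 \oplus M$ with $M = L_2 \oplus \ldots \oplus L_m$. Under this splitting $\rho|_{L_1} = \tau_{d_1} \circ \r_{hyp}$ is the Veronese representation of $\SL(2, \R)$, whose boundary curve is hyperconvex in every degree and hence satisfies $H_j$ and $C_j$ for every valid $j$, as observed above. By the lemma preceding the reducibility propositions, the boundary maps decompose as $\xi^j(x) = x_1^{j_1} \oplus x_M^{j_M}$ with constant indices $j_1 + j_M = j$, so the question reduces to identifying the values of $d_1 - d_2$ for which $j_1 = j$ throughout the relevant range.

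The key computational input is the singular value calculation. For $g \in \SL(2, \R)$ with top singular value $\sigma \geq 1$, the singular values of $\tau_{d_i}(g)$ are asymptotically $\sigma^{d_i - 1 - 2a}$ for $a = 0, \ldots, d_i - 1$ (these are the weights of $\tau_{d_i}$). Ordering the union of these multisets in decreasing order, the top $N$ singular values of $\rho(\gamma)$ (for $\gamma$ along a geodesic ray, so $\sigma \to \infty$) come entirely from $L_1$, with a strict gap at index $N$, if and only if the $N$-th largest exponent of $\tau_{d_1}$, namely $d_1 - 2N + 1$, strictly exceeds the top exponent $d_2 - 1$ of every other block, equivalently $d_1 - d_2 > 2(N-1)$. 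Setting $N = k+1$ and $N = k+2$ yields the candidate thresholds $d_1 - d_2 > 2k$ for $H_k$ and $d_1 - d_2 > 2k + 2$ for $H_k$ together with $C_k$.

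The forward direction follows directly: when the threshold holds, $\rho$ is Anosov in the required degrees with $j_1 = j$ throughout, and the Veronese curve satisfies the relevant hyperconvexity on $L_1$, so the corresponding reducibility proposition yields $H_k$, respectively both $H_k$ and $C_k$, for $\rho$. For the reverse I would case-split on the parity of $d_1 - d_2$. Even values of $d_1 - d_2 \leq 2k$, respectively $\leq 2k+2$, always produce a tie between two singular value exponents at some index in $\{k-1, k, k+1\}$, respectively $\{k-1, \ldots, k+2\}$, so the Anosov condition required by $H_k$, respectively $C_k$, fails outright. Odd values shift the $(k+1)$-th, respectively $(k+2)$-th, singular value from $L_1$ into $M$, so $(k+1)_1 \neq k+1$, respectively $(k+2)_1 \neq k+2$. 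For $H_k$ the iff in the reducibility proposition (after the WLOG relabeling) rules out $H_k$; for $C_k$ in the edge case $d_1 - d_2 = 2k+1$ a direct dimension count shows the $C_k$-sum fails to be direct: using $(k+1)$-Anosov of the Veronese $\rho|_{L_1}$, the restrictions of $\xi^{d-k-2}(x)$ and $\xi^{k+1}(z)$ to $L_1$ already fill $L_1$, while the middle term $\xi^{d-k+1}(x) \cap \xi^k(y)$ lies entirely in $L_1$ and the $M$-part of the sum contributes only dimension $d_M - 1$, giving total dimension at most $d - 1 < d$. The main technical point is this last dimension count, since the $C_k$-reducibility proposition is only a sufficient condition; the remaining case analysis is routine.
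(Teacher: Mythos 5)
Your proof is correct and carries out exactly the strategy the paper implies: decompose $E = L_1 \oplus M$ with $L_1 = \Sym^{d_1-1}\R^2$, read off the singular-value exponents $d_i - 1 - 2a$ of each irreducible summand, determine from the resulting interleaving when the top $N$ exponents all come from $L_1$ with a strict gap (namely $d_1 - d_2 > 2(N-1)$), and feed this into the two reducibility propositions together with the fact that the Veronese curve of $\rho|_{L_1}$ satisfies $H_j$ and $C_j$ in every valid degree. The case split by parity of $d_1 - d_2$ for the ``only if'' direction is the right thing to do, and for odd $d_1 - d_2 \leq 2k-1$ the observation that the singular-value exponents of $L_1$ and $M$ interleave alternately past position $j+1$ (where $d_1 - d_2 = 2j+1$) shows that whichever block is taken as primary after the WLOG relabeling, position $k+1$ lies in the other block, so the iff criterion $(k+1)_a = k_a+1$ fails.

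The genuinely nontrivial point, which you correctly isolate, is the edge case $d_1 - d_2 = 2k+1$ for $C_k$: here the representation is still $\{k-1,k,k+1,k+2\}$-Anosov, so the failure of $C_k$ is not a failure of the Anosov hypothesis, and since the reducibility proposition for $C_k$ is stated only as a sufficient condition one cannot just quote it. Your dimension count closes this gap cleanly: with $d_1 - d_2 = 2k+1$ one has $(k+1)_1 = k+1$, $(k+2)_1 = k+1$, hence $(d-k-2)_1 = d_1 - k - 1$, and the middle term $\xi^{d-k+1}(x) \cap \xi^k(y)$ lies entirely in $L_1 = x_1^{d_1-k-1} \oplus z_1^{k+1}$; projecting the three summands to $L_1$ and to $M$ gives total dimension at most $d_1 + (d_M - 1) = d - 1 < d$, so the sum cannot be direct. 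This argument is what actually establishes the ``only if'' for $C_k$. One small point worth making explicit when $m > 2$: if $d_2 = d_3$ the representation already fails to be $(k+2)$-Anosov at the threshold, so $C_k$ fails for the even more basic reason, and when $d_2 > d_3$ the dimension count goes through unchanged with $M = L_2 \oplus \cdots \oplus L_m$; your write-up implicitly covers this but spelling it out would make the case analysis airtight.
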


\subsection{$\Theta$-positive representations}
Another (conjectural) class of higher rank Teichm\"uller theories are the so-called $\Theta$-positive representations, as introduced by Guichard and Wienhard \cite{GWpositivity}. For the purposes of this paper we will only be concerned with $\Theta-$positive representations into $\SO(p,q)$ where we assume $q>p$. When considering $\SO(p,q)$ as a subgroup of $\SL(p+q,\R)$, $\Theta$-positive representations are (conjecturally) $\{1,\ldots,p-1\}$-Anosov.\footnote{From now on, when we write '$\Theta-$positive representations into $\SO(p,q)$' we additionally assume that they are $\{1,\ldots,p-1\}$-Anosov. Conjecturally Anosovness already follows from positivity  \cite[Conjecture 5.4]{GWpositivity}.}

We will not need the precise definition of $\Theta$-positive representations, and as it would require introducing many concepts from Lie theory which are not important for our paper, we refer to \cite[Section 4.5]{GWpositivity}; the only important property of such representations that will be relevant here is a precise form of positivity of the associated boundary map which we now recall. For this we choose a basis of $\SL(p+q,\R)$ such that the quadratic form $Q$ of signature $p,q$ is represented by the matrix 
$$\bpm 0&0&K\\0&J&0\\K^t&0&0\epm$$ where 
$$K=\begin{pmatrix}0&0&(-1)^{p-1}\\ 0&\iddots&0\\-1&0&0\end{pmatrix}\text{ and } J=\begin{pmatrix}0&0&1\\ 0&-\Id_{q-p}&0\\1&0&0\end{pmatrix}.$$
In this section we will denote by $\calF$ the subset of the partial flag manifold associated to $\SL(p+q,\R)$ consisting of subspaces of dimension $\{1,\ldots, p-1,q+1,\ldots,q+p-1\}$ such that the first $p-1$ subspaces are isotropic for $Q$ and the others are their orthogonals with respect to $Q$.
We will furthermore denote by $Z$ and $X$ the partial flags in $\calF$ such that $Z^l=\langle e_1,\ldots, e_l\rangle$ and $X^l=\langle e_{d},\ldots, e_{d-l+1}\rangle$, in particular $X^k\cap Z^{d-k+1}=e_{d-k+1}$. Here, as above, $l$ ranges in the set $\{1,\ldots, p-1, q+1,\ldots, q+p\}$.

Given a positive real number $v$, and an integer $1\leq k\leq p-2$ we define $E_{k}(v)$ as the matrix that differs from the identity only in the positions $(k,k+1)$ and $(d-k, d-k+1)$ where it is equal to $v$. Instead, for $k=p-1$ we choose a vector $ v\in \R^{q-p+2}$ which is positive for the quadratic form associated to $J$ and has positive first entry. For each such $v$ we define 
$$E_{p-1}(v)=\bpm 
\Id_{p-2}&0&0&0&0\\
0&1& v^t&q_J(v)&0\\
0&0&\Id_{q-{p+2}}&J v&0\\
0&0&0&1&0\\
0&0&0&0&\Id_{p-2}
\epm.$$

In order to define the properties of $\Theta$-positive representations that we will need, we will use the following reduced expression of the longest element $w_0$ of the Weyl group $W(\Theta)=W_{B_{p-1}}$, i.e. the Weyl group associated to the root system $B_{p-1}$: Let $S$ be a standard generating set in standard order of $W_{B_{p-1}}$, i.e. we write $S= \{s_1,\ldots,s_{p-1}\}$, where $s_{p-1}$ corresponds to reflection along the only long root in a set of simple roots. Let $S_e\subset S$ be the elements with even index and $S_o\subset S$ the elements with odd index. 
Denote the product of all the elements of $S_e$ by $a$ and the product of all elements of $S_o$ by $b$. Then $w_0$ can be expressed as $w_0=(a b)^{\frac h2}$, where $h$ is the Coxeter number, \cite[pp.150-151]{Bou2} (see also \cite[Lemma 4.3]{DS}). Note that for the root system of type $B_{p-1}$, $p\geq 3$ the Coxeter number is always even.
% Link towards reference: https://people.math.umass.edu/~jeh/pub/longest.pdf

We now consider the unipotent subgroup $U_{\Theta}$ of the stabilizer in $\SO(p,q)$ of the partial flag $Z$; our next goal is to define its \emph{positive semigroup} $U_\Theta^{>0}$  (cfr. \cite[Theorem 4.5]{GWpositivity}). We denote by $c_J(\R^{q-p+2})\subset \R^{q-p+2}$ the set of vectors that are positive for the quadratic form associated to $J$ and have positive first entry. Then we set
$$V_{\Theta}:=\left.\{ \ov v=(v_1,\ldots,v_{p-2},v_{p-1})^t\in \R^q\right| v_1,\ldots, v_{p-2}\in \R_{>0}, v_{p-1} \in c_J(\R^{q-p+2})\}.$$ 
Given $\ov v\in V_{\Theta}$ we set 
$$ab(\ov v)=\left(\prod_{j\leq p-1,\: j \text{ even}} E_j(v_j)\right)\cdot \left(\prod_{j\leq p-1,\: j \text{ odd}} E_j(v_j)\right)$$
For $\ov v_1,\ldots \ov v_{\frac{h}{2}}\in V_{\Theta}$ we define the \emph{positive element} $P(\ov v_1,\ldots \ov v_{\frac{h}{2}})$ as the product
$$P(\ov v_1,\ldots \ov v_{\frac{h}{2}})=ab(\ov v_1)\ldots ab(\ov v_\frac{h}{2})$$
The positive semigroup $U_\Theta^{>0}$ consists precisely of the positive elements defined above (cfr. \cite[Theorem 4.5]{GWpositivity}). This allows to recall the notion of positivity for triples of flags associated to $\SO(p,q)$ (cfr. \cite[Definition 4.6]{GWpositivity}):  a triple $(A,B,C)\in \calF^3$ is $\Theta$-\emph{positive} if there exists an element $g\in\SO_0(p,q)$ and a positive element $P(\ov v_1,\ldots \ov v_{\frac{h}{2}})$ such that 
$$(gA,gB,gC)=(X, P(\ov v_1,\ldots \ov v_{\frac{h}{2}})X,Z).$$

\begin{defn}[{\cite[Definition 5.3]{GWpositivity}}]
A representation $\rho:\G\to\SO(p,q)$ is \emph{$\Theta$-positive} if and only if it admits a positive equivariant boundary map $\xi:\partial \G\to \calF$; that is, for every positively oriented triple $(z,y,x)\in\partial \G$, the triple $(\xi(z),\xi(y),\xi(x))$ is positive. 
 \end{defn}

It was proven in \cite[Theorem 10.1]{PSWB} that $\Theta$-positive representations $\rho:\G\to \SO(p,q)$ have property $H_k$ for $1\leq k<p-2$. They also satisfy property $C_k$ in a slightly smaller range: 
\begin{prop}\label{p.c1}
Let $\rho:\G\to \SO(p,q)$ be $\Theta$-positive. For every $1\leq k\leq p-3$ %, and every $q+3\leq k\leq q+p-1$ 
the representation $\rho$ has property $C_k$.
\end{prop}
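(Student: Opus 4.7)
The plan is to use the positivity of the boundary map to reduce the property $C_k$ transversality to an explicit linear-algebra statement about a positive element $P \in U_\Theta^{>0}$. Given a cyclically ordered pairwise distinct triple $(z,y,x) \in (\partial \G)^3$, $\Theta$-positivity combined with the $\SO_0(p,q)$-transitivity on pairs of transverse partial flags lets us assume, after an equivariant normalization, that $(\xi(z), \xi(y), \xi(x)) = (X, P X, Z)$ for some positive element $P = P(\bar v_1, \ldots, \bar v_{h/2}) \in U_\Theta^{>0}$. In this normalization, property $C_k$ becomes the concrete statement
\[
(P X^k \cap Z^{d-k+1}) \oplus (X^{k+1} \cap Z^{d-k+1}) \oplus Z^{d-k-2} = Z^{d-k+1}.
\]

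Next I set up the dimension count. Since $k \leq p-3$ guarantees that $\rho$ is $\{k, k+1, d-k-2, d-k+1\}$-Anosov, each individual pair of flags appearing above is transverse in the expected dimension: $\dim(PX^k \cap Z^{d-k+1}) = 1$ and $\dim(X^{k+1} \cap Z^{d-k+1}) = 2$. A direct calculation in coordinates yields $X^{k+1} \cap Z^{d-k+1} = \langle e_{d-k}, e_{d-k+1}\rangle$, so that after projecting to the three-dimensional quotient $V := Z^{d-k+1}/Z^{d-k-2}$ with basis $[e_{d-k-1}], [e_{d-k}], [e_{d-k+1}]$, the claim reduces to showing that the line $[PX^k \cap Z^{d-k+1}]$ is not contained in the coordinate plane $\langle [e_{d-k}], [e_{d-k+1}]\rangle$. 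Equivalently, its generator must have nonzero $e_{d-k-1}$-coefficient modulo $Z^{d-k-2}$.

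The core step is to analyze this coefficient via the factorization $P = ab(\bar v_1) \cdots ab(\bar v_{h/2})$ into the elementary positive Chevalley generators $E_j(v)$ attached to the reduced expression $(ab)^{h/2}$ of $w_0$. Solving the $k-1$ linear equations that cut $P X^k \cap Z^{d-k+1}$ out of $P X^k$, and using that each $E_j(v)$ is unipotent with a single positive off-diagonal entry (or block, when $j = p-1$), one verifies by induction on the length of the reduced word that the $e_{d-k-1}$-coefficient of the resulting spanning vector is a nontrivial polynomial with nonnegative coefficients in the entries of the $\bar v_i \in V_\Theta$, having at least one monomial of the form $\prod_i v_i^{a_i}$ appear; since these entries are strictly positive by the definition of $V_\Theta$, this coefficient is strictly positive and the claim follows. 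The main obstacle is the combinatorial bookkeeping tracing how the Chevalley factors combine through $(ab)^{h/2}$ to produce this specific matrix entry, which is analogous to, though more delicate than, the analysis in \cite[Theorem 10.1]{PSWB} establishing property $H_k$. As an alternative avoiding explicit computation, one could exploit Proposition \ref{prop.INTROCkclopen} together with a deformation of $\rho$ inside the $\Theta$-positive locus to the Fuchsian representation $\tau_{(2p-1, 1, \ldots, 1)} \circ \rho_{hyp}$, which satisfies property $C_k$ as soon as $2p - 2 > 2k + 2$, i.e. $k \leq p-3$, provided one establishes the existence of a connecting path in the $\Theta$-positive locus.
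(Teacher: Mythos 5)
Your proof proposal follows essentially the same strategy as the paper: normalize a triple in the boundary to the form $(X, PX, Z)$ using $\Theta$-positivity and the $\SO_0(p,q)$-action, then reduce property $C_k$ to the nonvanishing of the matrix entry $P_{d-k-1,d-k+1}$, and finally verify positivity of that entry by tracking signs through the product $P = ab(\bar v_1)\cdots ab(\bar v_{h/2})$. Two of your intermediate observations are correct and match the paper's reduction — $X^{k+1}\cap Z^{d-k+1} = \langle e_{d-k}, e_{d-k+1}\rangle$, and the claim does become the statement that $[PX^k\cap Z^{d-k+1}]$ has nonzero $e_{d-k-1}$-component in $Z^{d-k+1}/Z^{d-k-2}$. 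A small simplification you miss: since $P$ is unipotent upper-triangular, $PX^k\cap Z^{d-k+1}$ is just $\langle Pe_{d-k+1}\rangle$; there is no system of linear equations to solve, and the coefficient in question is simply $P_{d-k-1,d-k+1}$.

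However, there is a genuine gap. Property $C_k$ is a condition on all ordered triples, and the three slots play asymmetric roles. After fixing the orientation of $\partial\G$ used in the definition of $\Theta$-positivity, only half the ordered triples can be normalized to $(X, PX, Z)$ with $P$ positive; the remaining orientation leads to the triple $(X, P^{-1}X, Z)$. Your argument treats only the positive case. The $P^{-1}$ case is not a formal repetition: $P^{-1}$ is not a positive element (its entries $(P^{-1})_{d-k-1,d-k}$ and $(P^{-1})_{d-k,d-k+1}$ are nonpositive), so the ``polynomial with nonnegative coefficients'' argument you invoke does not apply directly. The paper handles this by noting that $(P^{-1})_{d-k-1,d-k+1}$ is built from sums of products of pairs of nonpositive entries, hence is again a sum of nonnegative terms with at least one strictly positive. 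This case must be addressed for the proof to be complete. Finally, the alternative route you sketch — deforming inside the $\Theta$-positive locus to a Fuchsian point and using that $C_k$ is open and closed — requires path-connectedness of the $\Theta$-positive Anosov locus, which is not established (and is itself a nontrivial statement), so as written it does not yield an independent proof.
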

\begin{proof}
We set $d=p+q$. In order to verify that the representation $\rho$ has property $C_k$, it is enough to verify that, for every positively and for every negatively oriented triple $(x,y,z)$ the sum 
$z^{d-k-2} + (z^{d-k+1}\cap y^{k}) + x^{k+1}$ is direct. 

Since this last property is invariant by the $\SL(p+q,\R)$-action, and thus in particular by the $\SO_0(p,q)$-action, it is enough to verify that for each positive element $P(\ov v_1,\ldots \ov v_{\frac{h}{2}})$ the triple $(X, P(\ov v_1,\ldots \ov v_{\frac{h}{2}})X,Z)$ is  such that the sum
$$Z^{d-k-2} + (Z^{d-k+1}\cap P(\ov v_1,\ldots \ov v_{\frac{h}{2}})X^{k}) + X^{k+1}$$
is direct, and the analogue result for the triple $(X, P(\ov v_1,\ldots \ov v_{\frac{h}{2}})^{-1}X,Z)$.

In turn this is equivalent to verify that for every admissible choice of $\ov v_1,\ldots \ov v_{\frac{h}{2}}\in V_{\Theta}$ and every $1\leq k\leq p-3$ the coefficient in position $(d-k-1,d-k+1)$ of the matrices $P(\ov v_1,\ldots \ov v_{\frac{h}{2}})$ and $P(\ov v_1,\ldots \ov v_{\frac{h}{2}})^{-1}$ doesn't vanish.

This follows readily from the definitions: indeed, given unipotent matrices $A,B$ it holds 
$$
\begin{array}{ll}
(AB)_{d-k-1,d-k}&= A_{d-k-1,d-k}+B_{d-k-1,d-k},\\
(AB)_{d-k,d-k+1}&= A_{d-k,d-k+1}+B_{d-k,d-k+1},\\
(AB)_{d-k-1,d-k+1}&= A_{d-k-1,d-k+1}+A_{d-k-1,d-k}B_{d-k,d-k+1}+B_{d-k-1,d-k+1}.
\end{array}$$
In particular, if $ab(\ov v_i), i=1,\ldots\frac{h}{2}$ is the matrix introduced in the definition of positive elements, it holds
$$
\begin{array}{rl}
(ab(\ov v_i))_{d-k-1,d-k}&= v_{k-1},\\
(ab(\ov v_i))_{d-k,d-k+1}&= v_k,\\
\end{array}$$
and
$$
\begin{array}{rl}
(ab(\ov v_i)^{-1})_{d-k-1,d-k}&= -v_{k-1},\\
(ab(\ov v_i)^{-1})_{d-k,d-k+1}&= -v_k,\\
\end{array}$$

One readily checks by induction, using the fact that $h\geq 2$, that the relevant coefficients don't vanish: indeed it is sum of positive numbers of which at least one is non-zero. To be more precise, in the case of $P(\ov v_1,\ldots \ov v_{\frac{h}{2}})$ all coefficients in positions $(d-k-1,d-k)$, $(d-k,d-k+1)$ and $(d-k-1,d-k+1)$ in all the matrices whose product gives  $P(\ov v_1,\ldots \ov v_{\frac{h}{2}})$ are either positive or zero; in the case  of $P(\ov v_1,\ldots \ov v_{\frac{h}{2}})^{-1}$ all coefficients in positions $(d-k-1,d-k)$, $(d-k,d-k+1)$  in all the matrices whose product gives  $P(\ov v_1,\ldots \ov v_{\frac{h}{2}})^{-1}$ are negative or zero and  thus all the coefficients in position  $(d-k-1,d-k+1)$ are positive because they are sums of products of pairs of the previous coefficients. 
\end{proof}

\section{Positively ratioed representations}\label{sec:posrat}
{ We now turn to the study of positively ratioed representations. The following definition is essentially due to Martone-Zhang (\cite{MZ}):

\begin{defn} 
We say that a $k-$Anosov representation $\r:\G\to \SL(E)$ is \emph{$k-$positively ratioed} if for all cyclically ordered quadruples $(x,y,z,w)$ of points in $\dg$ 
\begin{align*}
\gc_k(x^k,y^{d-k},z^{d-k},w^k)>1.
\end{align*}
\end{defn}

\begin{remark}
Our definition of $k$--positively ratioed representation is slightly stronger than the one in \cite{MZ}:
A representation is positively ratioed in the sense of Martone Zhang if it is a $k-$Anosov representation $\r:\G\to\SL(E)$ such that
\begin{align*}
\log |\gc_k(x^k,y^{d-k},z^{d-k},w^k)\gc_{d-k}(x^{d-k},y^{k},z^{k},w^{d-k})|>0.
\end{align*}
To see that a $k-$positively ratioed representation is positively ratioed in the sense of Martone-Zhang, it is enough to observe that
$$\gc_{d-k}(x^{d-k},y^{k},z^{k},w^{d-k})=\gc_{k}(y^k,x^{d-k},w^{d-k},z^k),$$
as well as that $(y,x,w,z)$ are cyclically ordered in $\dg$ if $(x,y,z,w)$ are.
\end{remark}

Representations satisfying properties $H_k$, $H_{d-k}$ are $k$-positively ratioed:
\begin{thm}\label{prop.hyperconvex implies positively ratioed}
If $\r:\G\to\SL(E)$ satisfies properties $H_k$ and $H_{d-k}$, then $\r$ is $k-$positively ratioed.
\end{thm}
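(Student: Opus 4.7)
The plan is to fix a cyclically ordered quadruple $(x,y,z,w)$ with all four points distinct and to prove $\gc_k(x,y,z,w)>1$ by a continuity-plus-monotonicity argument, using the periodic computation of Corollary~\ref{cor.periods of k cross ratio} as an anchor for the sign. Proposition~\ref{p.posvsstr} already gives that $\gc_k>0$ on the connected open set $\Omega\subset\dg^4$ of cyclically ordered quadruples, so $\log\gc_k:\Omega\to\R$ is continuous and real-valued. It therefore suffices to prove that $\log\gc_k$ never vanishes on $\Omega$; connectedness plus Corollary~\ref{cor.periods of k cross ratio} applied to a single periodic configuration then forces the sign to be positive throughout.

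To establish non-vanishing I would fix $x,y,z$ and study $f(w):=\gc_k(x,y,z,w)$ on $\dg\setminus\{x,y,z\}$. By the cocycle identity of Lemma~\ref{lem property of grassmannian cro}(2) one has $f(w)=f(w_0)\cdot \gc_k(w_0,y,z,w)$, so the derivative of $\log f$ at any $w_0$ equals the derivative at $w_0$ of $w\mapsto \gc_k(w_0,y,z,w)$. For $w$ close to $w_0$, the $C^1$-regularity of $\xi^k$ with tangent space described in Proposition~\ref{pro 1-hyperconvex for property Hk} ensures that $X^{k-1}(w):=w_0^k\cap w^k$ is $(k-1)$-dimensional and converges to $w_0^{k-1}$, while $X^{k+1}(w):=\langle w_0^k,w^k\rangle$ converges to $w_0^{k+1}$. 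By Proposition~\ref{prop.projective and k-cross ratio},
\[
\gc_k(w_0,y,z,w)=\pc\bigl([w_0^k],[y^{d-k}\cap X^{k+1}(w)],[z^{d-k}\cap X^{k+1}(w)],[w^k]\bigr)
\]
with projections in $\Pp(X^{k+1}(w)/X^{k-1}(w))$. Taking limits as $w\to w_0$, the three points $[w_0^k]$, $[y^{d-k}\cap w_0^{k+1}]$, $[z^{d-k}\cap w_0^{k+1}]$ in $\Pp(w_0^{k+1}/w_0^{k-1})$ are pairwise distinct by property $H_{d-k}$ combined with Anosov transversality, while $[w^k]$ traces a $C^1$ curve starting at $[w_0^k]$ with non-zero velocity in this quotient. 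Lemma~\ref{lem.non-zero derivative of projective cross ratio} then applies and yields that the derivative does not vanish.

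With strict monotonicity of $\log f$ on each arc of $\dg\setminus\{x,y,z\}$ in hand, the conclusion follows: on the arc from $z$ to $x$ not containing $y$ one has $f\to 1$ as $w\to x$ and $|f|\to\infty$ as $w\to z$, so by monotonicity and the positivity of $f$ from Proposition~\ref{p.posvsstr} the limit at $z$ is $+\infty$, whence $\log f>0$ on the entire open arc and $\gc_k(x,y,z,w)>1$. Alternatively, once non-vanishing and connectedness are in place, one may anchor the positive sign on $\Omega$ by evaluating at a single periodic configuration $(g_-,y_0,gy_0,g_+)$ via Corollary~\ref{cor.periods of k cross ratio}.

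The main obstacle is the non-vanishing of the derivative in the limit quotient $\Pp(w_0^{k+1}/w_0^{k-1})$: translating the qualitative $C^1$-regularity of the image $\xi^k(\dg)$ from Proposition~\ref{pro 1-hyperconvex for property Hk} into quantitative first-order information on the motion of the projected curve requires a careful choice of parametrization, for example arc length along $\xi^k(\dg)$, to rule out cusp-type vanishing of the velocity. The other subtlety, the pairwise distinctness of the remaining entries of the projective cross ratio in the limit, is precisely what property $H_{d-k}$ delivers: it forces $y^{d-k}\cap w_0^{k+1}$ and $z^{d-k}\cap w_0^{k+1}$ to be one-dimensional and their projections in $\Pp(w_0^{k+1}/w_0^{k-1})$ to be distinct from each other and from $[w_0^k]$, which is exactly the input needed to invoke Lemma~\ref{lem.non-zero derivative of projective cross ratio}.
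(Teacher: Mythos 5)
Your argument is the mirror image of the paper's: you differentiate $\gc_k$ in the fourth ($k$-plane) slot along $\xi^k$, using property $H_k$ for the $C^1$-smoothness of $\xi^k$ and property $H_{d-k}$ for the distinctness of the three projected points; the paper differentiates in the third ($(d-k)$-plane) slot along $\xi^{d-k}$, using $H_{d-k}$ for smoothness and $H_k$ (via injectivity of $P_y$) for distinctness. These two are equivalent under the identity $\gc_k(V_1,W_2,W_3,V_4)=\gc_{d-k}(W_2,V_1,V_4,W_3)$, so the route is essentially the same. Your sign-anchoring via the blow-up of $\gc_k(x,y,z_n,w)$ as $z_n\to w$ (or via Corollary~\ref{cor.periods of k cross ratio}) also matches the paper's conclusion step.

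There is, however, a genuine gap where you apply Proposition~\ref{prop.projective and k-cross ratio} with $P=w_0^k$ and $Q=w^k$ for $w$ on the actual boundary curve $\xi^k$ near $w_0$. That proposition requires $w_0^k\cap w^k$ to be exactly $(k-1)$-dimensional, and this does \emph{not} follow from $C^1$-regularity with a rank-one tangent. Concretely, a $C^1$ curve in $\Gr_2(\R^4)$ of the form $c(t)=\langle e_1+t^2e_4,\,e_2+te_3\rangle$ has $c(0)=\langle e_1,e_2\rangle$ and rank-one tangent with kernel $\langle e_1\rangle$, yet $c(t)\cap c(0)=\{0\}$ for $t\neq 0$: a second-order deviation can kick the curve out of the pencil. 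Because the dimension of $w_0^k\cap w^k$ may collapse, the decomposition of $\gc_k$ as a projective cross ratio in $\Pp(X^{k+1}(w)/X^{k-1}(w))$ is not available for $w$ on $\xi^k$, and the ``limit quotient'' you invoke is not well defined along the actual curve. The fix is exactly what the paper does in the dual slot: since you only need the directional derivative of $\gc_k(w_0^k,y^{d-k},z^{d-k},\cdot)$ along the tangent direction of $\xi^k$ at $w_0^k$, replace $\xi^k$ by the explicit pencil curve $w_t^k:=\langle e_1,\ldots,e_{k-1},e_k+te_{k+1}\rangle$ where $w_0^{k-1}=\langle e_1,\ldots,e_{k-1}\rangle$, $w_0^k=\langle e_1,\ldots,e_k\rangle$, $w_0^{k+1}=\langle e_1,\ldots,e_{k+1}\rangle$. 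By Proposition~\ref{pro 1-hyperconvex for property Hk} this curve has the same tangent line at $t=0$ as $\xi^k$, so the chain rule reduces your nonvanishing claim to the derivative along $w_t^k$. For the pencil curve, $w_0^k\cap w_t^k=w_0^{k-1}$ and $\langle w_0^k,w_t^k\rangle=w_0^{k+1}$ exactly, the quotient $\Pp(w_0^{k+1}/w_0^{k-1})$ is fixed, $[w_t^k]$ moves with manifestly nonzero velocity, and Lemma~\ref{lem.non-zero derivative of projective cross ratio} applies verbatim. With that substitution your argument closes.
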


\begin{proof} Note that property $H_{d-k}$ implies  that $\xi^{d-k}$ has $C^1-$image (Proposition \ref{pro 1-hyperconvex for property Hk}). Let $\Phi_y\in T_{y^{d-k}}\xi^{d-k}\backslash\{0\}$.
We claim that
$$d_{y^{d-k}} \gc_k(x^k,y^{d-k},\cdot\,,w^k)(\Phi_y)\neq 0$$ 
for all pairwise distinct $x,y,w\in\dg$:
Fix such $x,y,w$. We find a basis $(e_1,\ldots,e_d)$ of $E$ such that 
\begin{align*}
y^j=&\langle e_1,\ldots, e_j\rangle, \quad j=d-k-1,d-k,d-k+1.
\end{align*} 
According to Proposition \ref{pro 1-hyperconvex for property Hk} the curve 
$$y^{d-k}_t:=\langle e_1,\ldots, e_{d-k-1},e_{d-k}+te_{d-k+1}\rangle, \quad t\in (-\epsilon,\epsilon)$$ 
is tangent to $T_{y^{d-k}}\xi^{d-k}$ at $y^{d-k}=y_0^{d-k}$, therefore
$$ d_{y^{d-k}} \gc_k(x^k,y^{d-k},\cdot,w^k)(\Phi_y)\neq 0 \Longleftrightarrow \left.\frac{d}{dt}\right|_{t=0} \gc_k(x^k,y^{d-k},y_t^{d-k},w^k)\neq 0.$$
Set $Y:=\quotient{y^{d-k+1}}{y^{d-k-1}}$ and recall from Definition \ref{def.Px} the $k$-th $\R\P^1$-projection
$$P_y=\dg\to\Pp(\quotient{y^{d-k+1}}{y^{d-k-1}}).$$%quad z\mapsto [z^k\cap y^{d-k+1}].$$ 
Corollary \ref{cor.projective and k-cross ratio} yields that
$$\gc_k(x^k,y^{d-k},y_t^{d-k},w^k)=\pc (P_y(x),P_y(y),[y_t^{d-k}]_Y,P_y(w)).$$
Property $H_{k}$ guarantees that $P_y(x),P_y(w),P_y(y)$ are pairwise distinct (Proposition \ref{prop.P projec injective}). Moreover the derivative of the $C^1-$curve $[y_t]_Y$ is not $0$, as $\Pp(\quotient{y^{d-k+1}}{y^{d-k-1}})\simeq \Pp(\langle e_{d-k},e_{d-k+1}\rangle)$ and $[y_t]_Y$ is the projectivization of the path $t\mapsto e_{d-k}+te_{d-k+1}$. Therefore it follows from Lemma \ref{lem.non-zero derivative of projective cross ratio} that 
\begin{align*}
&\left.\frac{d}{dt}\right|_{t=0} \pc(P_y(x),P_y(y),[y_t^{d-k}]_Y,P_y(w))\neq 0
\end{align*}
which is equivalent to $\left.\frac{d}{dt}\right|_{t=0} \gc_k(x^k,y^{d-k},y_t^{d-k},w^k)\neq 0$ and proves the claim.

Now, let $y\to \Phi_y\in T_{y^{d-k}}\xi^{d-k}\backslash\{0\}$ be a continuous map defined on a connected component of $\dg\backslash \{x,w\}$, choose $\Phi_y$ so that $(x,y,y_t,w)$ are cyclically ordered for a curve  $y^{d-k}_t$ with derivative $\Phi_y$ and $t>0$.%\marginpar{\J Phrase better?!}

The regularity of the cross ratio implies that the never vanishing map
$$y\mapsto d_{y^{d-k}} \gc_k(x^k,y^{d-k},\cdot,w^k)(\Phi_y)$$ 
is continuous on connected components of $\dg\backslash \{x,w\}$, its sign is thus  constant on each component. The cocycle identity,  Lemma \ref{lem property of grassmannian cro} $(3)$, implies that for cyclically ordered $x,y,z,w\in\dg$, the cross ratio $\gc_k(x,y,z,w)$ is smaller than one if  the sign of the derivative is  negative, and is bigger than 1 if it is positive. Since, for $z_n\to w$, we deduce from  the continuity of the cross ratio and  Lemma \ref{lem property of grassmannian cro} $(3)$ that $\gc_k(x,y,z_n,w)\to \infty$, it  cannot be $\gc_k(x,y,z,w)<1$ for all cyclically ordered $x,y,z,w\in\dg$ and thus $\gc_k(x,y,z,w)>1$.
\end{proof}

\section{Proof of the collar lemma}\label{s.collar}
\addtocontents{toc}{\protect\setcounter{tocdepth}{1}}
Given non-trivial elements $g,h\in \G$, we denote, as usual, by $g_{\pm},h_{\pm}\in \dg$ the respective attractive and repulsive fix points, and we call the pair $g,h\in\G\backslash\{e\}$ \emph{linked} if $(g_{-},h_{-},g_{+},h_{+})$ are cyclically ordered - for $\G=\pi_1(S_g)$ this holds if and only if the corresponding closed geodesics for some (and thus any) choice of hyperbolic metric intersect in $S_g$. Note that this in particular asks that the four points are distinct. Clearly $g,h\in \G$ are linked if and only if $g,h^{-1}\in \G$ are linked. Throughout the section $g,h\in\G$ will always denote a linked pair.

 \begin{figure}[h]
\begin{tikzpicture}
\draw (0,0) circle [radius =1];
\draw (-1,0) to (1,0);
\draw (0,-1) to (0,1);
%\node at (-.5,0) [above] {$g$};
%\node at (0,-.5) [right] {$h$};
\node at (.5,0)  {$>$};
\node at (0,.5) [rotate=90] {$>$};
\node at (1,0) [right] {$g_+$};
\node at (-1,0) [left] {$g_-$};
\node at (0,1) [above] {$h_+$};
\node at (0,-1) [below] {$h_-$};
\node at (.8,.8)[right]{$hg_+$};
\filldraw (.7,.7) circle [radius=1pt];
\end{tikzpicture}
\caption{The relative positions of the fixed points of linked $g,h\in \G$.}
\end{figure}
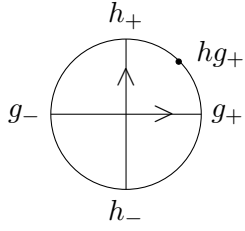

This section is devoted to the proof of Theorem \ref{thm.INTROcollar lemma} from the Introduction, which we restate here for the reader's convenience:

\begin{thm}\label{thm.collar lemma}
If  $\r:\G\to \SL(E)$  satisfies properties $H_{k-1}$, $H_{k}$, $H_{k+1}$, $H_{d-k-1}$, $H_{d-k}$, $ H_{d-k+1}$ and $C_{k-1}$, $C_k$. Then for every linked pair $g,h\in \G$
\begin{equation}\label{e.collar}
\frac{\lambda_1\ldots\lambda_k}{\lambda_d\ldots\lambda_{d-k+1}}(g_{\rho})>\left(1-\frac{\lambda_{k+1}}{\lambda_k}(h_{\rho})\right)^{-1}.
\end{equation}
\end{thm}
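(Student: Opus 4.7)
\begin{sketch}
The plan is to express both sides of \eqref{e.collar} as Grassmannian cross ratios and compare them through an explicit interpolation. By Corollary \ref{cor.periods of k cross ratio}, the left-hand side equals a $\gc_k$ cross ratio; on the other hand, Proposition \ref{p.gc1} applied with $x=g_+$ gives
$$\frac{\lambda_k}{\lambda_{k+1}}(\rho(h))=\pc_{h_-^{d-k-1}}\bigl(h_-^{d-k},\,g_+^k\cap h_-^{d-k+1},\,hg_+^k\cap h_-^{d-k+1},\,h_+^k\cap h_-^{d-k+1}\bigr)$$
in the projective line $\P(h_-^{d-k+1}/h_-^{d-k-1})\cong \R\P^1$.

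The first step is algebraic. Using the self-complementarity $\pc(a,b,c,d)=1-\pc(a,b,d,c)$ from Lemma \ref{lem.symmetry of projective cro}(8) together with the Pl\"ucker relation in $\R^2$ (equivalently, the cocycle identities of Lemma \ref{lem.symmetry of projective cro}), I will rewrite $(1-\lambda_{k+1}/\lambda_k(\rho(h)))^{-1}$ as a single projective cross ratio of four appropriately permuted points among those listed above. Applying Proposition \ref{prop.projective and k-cross ratio} to the pair of $(d-k)$-dimensional subspaces $h_-^{d-k}$ and
$$V:=\bigl(h_-^{d-k+1}\cap g_+^k\bigr)\oplus h_-^{d-k-1},$$
which share $h_-^{d-k-1}$ and span $h_-^{d-k+1}$, then expresses this projective cross ratio as a Grassmannian $\gc_k$ cross ratio having $V$ as one of its four entries; the remaining three entries are boundary subspaces naturally associated to $h_{\pm}$ and $hg_+$. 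Combining this identification with Corollary \ref{cor.periods of k cross ratio} and the cocycle identities of Lemma \ref{lem property of grassmannian cro}, the inequality \eqref{e.collar} reduces to the following key monotonicity statement: replacing $V$ by $g_+^{d-k}$, with the other three entries kept fixed, strictly increases the $\gc_k$ cross ratio.

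I will prove this monotonicity by interpolating between $V$ and $g_+^{d-k}$ along a Lipschitz path $V_t\in\Gr_{d-k}(E)$ and showing that $t\mapsto \gc_k(\,\cdot\,,\,\cdot\,,V_t,\,\cdot\,)$ is strictly monotone. Both endpoints intersect $h_-^{d-k+1}$ in the same line $h_-^{d-k+1}\cap g_+^k$ but have different $(d-k-1)$-dimensional radial summands ($h_-^{d-k-1}$ versus $g_+^{d-k-1}$); I will therefore construct $V_t$ so that it sits inside a natural two-parameter $C^1$ surface in $\Gr_{d-k}(E)$ built from the boundary curves $\xi^{d-k-1}$ and $\xi^{d-k+1}$. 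The $C^1$-regularity of these curves is precisely what properties $H_{d-k-1}$ and $H_{d-k+1}$ provide through Proposition \ref{pro 1-hyperconvex for property Hk}.

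The main obstacle, and the bulk of the argument, is the analysis of the horizontal and vertical partial derivatives of the cross ratio on this surface. The horizontal derivative records how the cross ratio varies as the image of $V_t$ in $\P(h_-^{d-k+1}/h_-^{d-k-1})$ moves along the hyperconvex curve of Proposition \ref{prop.properties of proj Px}; its sign is controlled by the cyclic order of the relevant boundary points, and its non-degeneracy is guaranteed by the $\R\P^2$-hyperconvexity of Proposition \ref{prop.hyperconvex proj to RP2}, which is where property $C_k$ enters on top of $H_k$. The vertical derivative is treated by a dual argument using property $C_{k-1}$ together with the auxiliary hyperconvexity properties $H_{k-1}$ and $H_{k+1}$, which control the transverse direction of the radial slide from $h_-^{d-k-1}$ towards $g_+^{d-k-1}$. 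Strict positivity of both derivatives along the entire path yields the desired strict inequality and hence \eqref{e.collar}.
\end{sketch}
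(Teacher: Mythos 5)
Your proposal follows essentially the same strategy as the paper: convert the right-hand side to a projective cross ratio via Proposition~\ref{p.gc1} and the symmetries of Lemma~\ref{lem.symmetry of projective cro}, pass to the Grassmannian cross ratio $\gc_{d-k}$ via Proposition~\ref{prop.projective and k-cross ratio} with $V=(h_-^{d-k+1}\cap g_+^k)\oplus h_-^{d-k-1}$ as the fourth entry, and prove the monotonicity of the cross ratio along a Lipschitz interpolation inside a $C^1$ surface built from $\xi^{d-k-1}$ and $\xi^{d-k+1}$, analyzing the two foliation directions via the $\R\P^2$-hyperconvexity from properties $H_\bullet$ and $C_\bullet$. (One notational slip: the Grassmannian cross ratio you obtain is $\gc_{d-k}$, not $\gc_k$.)

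However, the claim that the inequality ``reduces to the following key monotonicity statement: replacing $V$ by $g_+^{d-k}$'' hides a genuine gap. After this replacement you only reach $\gc_{d-k}\bigl(h_-^{d-k},\,h_+^k,\,hg_+^k,\,g_+^{d-k}\bigr)$, whereas the quantity on the left of \eqref{e.collar}, by Corollary~\ref{cor.periods of k cross ratio}, is the period $\gc_{d-k}\bigl(g_-^{d-k},\,h_+^k,\,gh_+^k,\,g_+^{d-k}\bigr)$: two further entries still need to be swapped, namely $h_-^{d-k}\rightsquigarrow g_-^{d-k}$ and $hg_+^k\rightsquigarrow gh_+^k$. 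The cocycle identities of Lemma~\ref{lem property of grassmannian cro} merely factor the cross ratio; to turn the factorization into the needed strict inequality you must know that the extra factors exceed $1$, and this is precisely where the $(d-k)$-strongly positively ratioed property --- Theorem~\ref{prop.hyperconvex implies positively ratioed}, which requires $H_k$ and $H_{d-k}$ --- has to be invoked together with the cyclic order of $h_-, g_-, h_+, hg_+, gh_+, g_+$. (The paper deals with the $hg_+\rightsquigarrow gh_+$ swap already at the projective stage, using that $P_{h_-}$ preserves cyclic order, Proposition~\ref{prop.properties of proj Px}, and the monotonicity $(9)$ in Lemma~\ref{lem.symmetry of projective cro}; it handles $h_-^{d-k}\rightsquigarrow g_-^{d-k}$ at the end via positive ratio-edness.) Your sketch nowhere mentions the positively ratioed property or Theorem~\ref{prop.hyperconvex implies positively ratioed}, and without it the reduction you assert does not close.
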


The proof is divided in three steps: In the first step we use the cross ratio $\pc_{h_-^{d-k-1}}$ and the connection to the cross ratio $\gc_{d-k}$ to bound the right hand side of (\ref{e.collar}) from above by 
$$\gc_{d-k} (h_-^{d-k},h_+^k,gh_+^k,\left(g_+^k\cap h_-^{d-k+1}\right)\oplus h_-^{d-k-1} ).$$
The main step of the proof is the second step, where we show that moving $\left(g_+^k\cap h_-^{d-k+1}\right)\oplus h_-^{d-k-1}$ to $g_+^{d-k}$ does not decrease the value of $\gc_{d-k}$. In the third step we use the fact that the representation is positively ratioed to further increase the value by replacing $h_-^{d-k}$ with $g_-^{d-k}$.

\subsection*{Step 1: Relating the eigenvalue gap with a Grassmannian cross ratio}
In this section we will consider the two dimensional vector space $H:=\quotient{h_{-}^{d-k+1}}{h_{-}^{d-k-1}}$ and study the $k$-th $\R\P^1$ projection based at $h_-$ 
$$P_{h_{-}}:\dg\to \Pp\left(\quotient{h_{-}^{d-k+1}}{h_{-}^{d-k-1}}\right)$$ introduced in Definition \ref{def.Px}. 
\begin{prop}\label{prop.col.inequality using properties of the projective cross ratio}
Assume that $\rho$ has property $H_k$. Then 
$$\left(1-\frac{\lambda_{k+1}}{\lambda_k}(h_{\rho})\right)^{-1}<
\pc\left(P_{h_{-}}(h_-),P_{h_{-}}(h_+),P_{h_{-}}( gh_+), P_{h_{-}}(g_+) \right).$$
\end{prop}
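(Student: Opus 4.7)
The plan is to combine Proposition \ref{p.gc1}, which expresses $\lambda_k/\lambda_{k+1}(h_\rho)$ as a projective cross ratio on the line $\Pp(h_-^{d-k+1}/h_-^{d-k-1})\simeq\R\Pp^1$, with the transformation laws of the projective cross ratio collected in Lemma \ref{lem.symmetry of projective cro}. Abbreviate $a := P_{h_-}(h_-^{d-k})$, $b := P_{h_-}(h_+^k)$, $c := P_{h_-}(gh_+^k)$, $d := P_{h_-}(g_+^k)$ and $e := P_{h_-}(hg_+^k)$. By $h$-equivariance of $P_{h_-}$, the element $h$ acts on $\Pp(h_-^{d-k+1}/h_-^{d-k-1})$ as a M\"obius transformation fixing $a$ (repelling) and $b$ (attracting) with multiplier $q := \lambda_k/\lambda_{k+1}(h_\rho)$ at $a$; in particular $e = h\cdot d$.

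Applying Proposition \ref{p.gc1} with $x=g_+$ gives $q = \pc(a,d,e,b)$. A short computation using only parts (1) and (8) of Lemma \ref{lem.symmetry of projective cro} then rewrites this as $q/(q-1) = \pc(a,b,e,d)$: by (8) $\pc(a,d,b,e)=1-q$, by (1) $\pc(a,b,d,e)=1/(1-q)$, and by (8) again $\pc(a,b,e,d)=1-1/(1-q)=q/(q-1)$. Thus the proposition reduces to the strict inequality
\[
\pc(a,b,e,d) \,<\, \pc(a,b,c,d),
\]
which amounts to saying that moving the third argument from $e$ to $c$ strictly increases the cross ratio.

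By Proposition \ref{prop.properties of proj Px}, property $H_k$ makes $P_{h_-}:\dg\to\Pp(h_-^{d-k+1}/h_-^{d-k-1})$ a cyclic-order-preserving homeomorphism. Linkedness of $g,h$ gives $(g_-,h_-,g_+,h_+)$ cyclically ordered on $\dg$; the attracting dynamics of $g$ at $g_+$ and of $h$ at $h_+$ place both $gh_+$ and $hg_+$ in the arc from $g_+$ to $h_+$ not containing $g_-,h_-$. The crucial claim is that in this arc $gh_+$ precedes $hg_+$, making $(g_-,h_-,g_+,gh_+,hg_+,h_+)$ cyclically ordered on $\dg$. Cyclic-order preservation by $P_{h_-}$ then yields that $(a,d,c,e,b)$ is cyclically ordered, equivalently, by reversal, $(a,b,e,c,d)$ is cyclically ordered. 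Applying part (9) of Lemma \ref{lem.symmetry of projective cro} to this 5-tuple then gives the desired inequality.

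The main obstacle is the cyclic ordering $gh_+ \prec hg_+$ on the arc from $g_+$ to $h_+$. Heuristically, $g$ pulls $h_+$ toward its attractor $g_+$ while $h$ pulls $g_+$ toward $h_+$, so for an Anosov representation of a surface group the two images cannot ``cross''; making this rigorous from the abstract linked plus $H_k$ hypotheses seems to require either an auxiliary positivity statement on $\dg$ derived from the Anosov dynamics (along the lines of Lee--Zhang's treatment of the Hitchin case), or a reduction via the cocycle identity of Lemma \ref{lem.symmetry of projective cro}(3) to a comparison already controlled by the cyclic order inherited from $H_k$.
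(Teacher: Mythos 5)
Your argument is essentially the paper's: both reduce, via Proposition~\ref{p.gc1} with $x=g_+$ and the symmetries of Lemma~\ref{lem.symmetry of projective cro} (the paper uses (1),(8),(1); you use (8),(1),(8), which gives the same identity), to the inequality $\pc(a,b,e,d)<\pc(a,b,c,d)$, which then follows from the order-preservation of $P_{h_-}$ (Proposition~\ref{prop.properties of proj Px}) and Lemma~\ref{lem.symmetry of projective cro}(9) applied to the cyclically ordered 5-tuple $(a,b,e,c,d)$.

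The one step you flag as the ``main obstacle'' is not, however, a gap of the kind you worry about. The cyclic order $(h_-,h_+,hg_+,gh_+,g_+)$ on $\dg$ (equivalent, after a cyclic shift and reversal, to the 6-tuple you write down) is a statement about the convergence group action of the surface group $\G$ on its Gromov boundary circle, i.e.\ about the hyperbolic geometry of $S_g$. It holds for any linked pair of infinite-order elements and has nothing to do with the representation $\rho$, the Anosov property, or property $H_k$; it is not an ``auxiliary positivity statement derived from the Anosov dynamics,'' nor does one need the cocycle identity to bypass it. The paper simply cites \cite[Lemma 2.2]{LZ} for this fact, and you could equally prove it directly with elementary hyperbolic geometry on the axes of $g$ and $h$ in $\H^2$. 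The role of $H_k$ in this proposition is solely to make $P_{h_-}$ an order-preserving homeomorphism onto $\R\Pp^1$, transporting the already-known cyclic order from $\dg$ to the projective line.
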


\begin{proof}
Since $h_\r$ induces a linear map of the two dimensional vector space $H$ with eigenvalues $\lambda_k(h_{\r}), \lambda_{k+1}(h_{\r})$, we deduce from Proposition \ref{p.gc1} that 
$$\frac{\lambda_k(h_{\r})}{\lambda_{k+1}(h_{\r})}=
\pc\left(P_{h_{-}}(h_-),P_{h_{-}}(g_+), P_{h_{-}}(hg_+), P_{h_{-}}(h_+)\right).$$
Applying the symmetries of the projective cross ratio, namely $(1)$, $(8)$ and again $(1)$ from Lemma \ref{lem.symmetry of projective cro}, we get
$$\left(1-\frac{\lambda_{k+1}}{\lambda_k}(h_{\rho})\right)^{-1}=
\pc\left(P_{h_{-}}(h_-),P_{h_{-}}(h_+),P_{h_{-}}( hg_+), P_{h_{-}}(g_+) \right).$$

Since by hyperbolic dynamics $h_-,h_+,hg_+,gh_+,g_+$ are in that cyclic order on $\dg$ (\cite[Lemma 2.2]{LZ}) and $P_{h_{-}}$ preserves the order (Proposition \ref{prop.properties of proj Px}), we get
\begin{align*}
&\pc\left(P_{h_{-}}(h_-),P_{h_{-}}(h_+),P_{h_{-}}( hg_+), P_{h_{-}}(g_+) \right)\\
< &\pc\left(P_{h_{-}}(h_-),P_{h_{-}}(h_+),P_{h_{-}}( gh_+), P_{h_{-}}(g_+) \right),
\end{align*}
which proves the claim.
\end{proof}

We conclude the first step with:

\begin{cor}\label{cor.root compared to grassmannian cro} Let $\r$ satisfy property $H_k$. Then
$$\begin{array}{l}
\left(1-\frac{\lambda_{k+1}}{\lambda_k}(h_{\rho})\right)^{-1}<
\gc_{d-k}\left(h_-^{d-k},h_+^k, gh_+^{k}, \left(g_+^k\cap h_-^{d-k+1}\right)\oplus h_-^{d-k-1} \right)
\end{array}
$$
\end{cor}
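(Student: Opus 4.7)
The plan is to convert the projective cross ratio bound from Proposition~\ref{prop.col.inequality using properties of the projective cross ratio} into a Grassmannian cross ratio by invoking Proposition~\ref{prop.projective and k-cross ratio} in the index $d-k$.

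First I would set $P := h_-^{d-k}$ and $Q := \left(g_+^k\cap h_-^{d-k+1}\right)\oplus h_-^{d-k-1}$. Both are $(d-k)$-dimensional subspaces contained in $h_-^{d-k+1}$. To apply the identification of cross ratios, I need $P\cap Q = h_-^{d-k-1}$ and $\langle P,Q\rangle = h_-^{d-k+1}$. The $k$-Anosov property gives $g_+^k\cap h_-^{d-k}=0$, while a dimension count forces $g_+^k\cap h_-^{d-k+1}$ to be exactly a line; this line is not contained in $h_-^{d-k}$, so $P\neq Q$, and consequently $P\cap Q$ has dimension exactly $d-k-1$, hence equals $h_-^{d-k-1}$. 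Since $P+Q\subseteq h_-^{d-k+1}$ and $\dim(P+Q)=d-k+1$, one gets $\langle P,Q\rangle = h_-^{d-k+1}$ as required.

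Next I would apply Proposition~\ref{prop.projective and k-cross ratio} (with $k$ replaced by $d-k$, so $X^{d-k-1}=h_-^{d-k-1}$, $X^{d-k+1}=h_-^{d-k+1}$, $X=h_-^{d-k+1}/h_-^{d-k-1}$), together with the observation that
\[
[Q]_X \;=\; \left[\left(g_+^k\cap h_-^{d-k+1}\right)\oplus h_-^{d-k-1}\right]_X \;=\; \left[g_+^k\cap h_-^{d-k+1}\right]_X \;=\; P_{h_-}(g_+^k),
\]
and the analogous observation that $[h_+^k\cap h_-^{d-k+1}]_X = P_{h_-}(h_+^k)$, $[gh_+^k\cap h_-^{d-k+1}]_X = P_{h_-}(gh_+^k)$, $[h_-^{d-k}]_X = P_{h_-}(h_-^{d-k})$. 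This yields the identity
\[
\gc_{d-k}\!\left(h_-^{d-k},h_+^k, gh_+^{k}, Q\right)
\;=\; \pc\!\left(P_{h_-}(h_-^{d-k}),P_{h_-}(h_+^k),P_{h_-}(gh_+^k), P_{h_-}(g_+^k)\right).
\]

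The corollary then follows at once by combining this identity with the strict inequality already established in Proposition~\ref{prop.col.inequality using properties of the projective cross ratio}. There is no genuine obstacle here: once the subspaces $P$ and $Q$ are chosen so that the span and intersection are the correct Anosov boundary flags at $h_-$, the statement is simply a translation between the two cross ratios. The only small care needed is the dimensional verification that $P\cap Q=h_-^{d-k-1}$, which rests on the $k$-Anosov transversality $g_+^k\cap h_-^{d-k}=0$.
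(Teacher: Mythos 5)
Your proof is correct and takes essentially the same route as the paper: both convert the Grassmannian cross ratio $\gc_{d-k}$ with last entry $Q=(g_+^k\cap h_-^{d-k+1})\oplus h_-^{d-k-1}$ into the projective cross ratio on $\Pp(h_-^{d-k+1}/h_-^{d-k-1})$ via Proposition~\ref{prop.projective and k-cross ratio} (applied in index $d-k$), and then invoke Proposition~\ref{prop.col.inequality using properties of the projective cross ratio}. The only difference is that you spell out the short dimension count verifying $P\cap Q=h_-^{d-k-1}$ and $\langle P,Q\rangle=h_-^{d-k+1}$, which the paper leaves implicit.
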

\begin{proof}
By the lemma above it is enough to show that
\begin{align*}
&\pc\left(P_{h_{-}}(h_-),P_{h_{-}}(h_+),P_{h_{-}}( gh_+), P_{h_{-}}(g_+) \right)\\
=&\gc_{d-k}\left(h_-^{d-k},h_+^k, gh_+^{k}, \left(g_+^k\cap h_-^{d-k+1}\right)\oplus h_-^{d-k-1} \right).
\end{align*}
This however is a consequence of the relation of the projective cross ratio on $\P(\quotient{h_-^{d-k+1}}{ h_-^{d-k-1}})$ and $\gc_{d-k}$ as in Corollary \ref{cor.projective and k-cross ratio}. 
\end{proof}
%%%%%%%%%%
\subsection*{Step 2: Replacing $\left(g_+^k\cap h_-^{d-k+1}\right)\oplus h_-^{d-k-1}$ with $g_+^{d-k}$}
%%%%%%%%%
Recall that for pairwise distinct points $x,y,z\in\dg$ we denote by $(x,y)_z$ the connected component of $\dg\backslash \{x,y\}$ that does \emph{not} contain $z$ (Notation \ref{n.interval}). We fix throughout the section an orientation on $S^1\simeq \dg$ such that the ordered triples $(h_-,x,g_+)$ for $x\in (h_-,g_+)_{g_-}$ are positively oriented.

As already in the proof of Theorem \ref{prop.hyperconvex implies positively ratioed}, instead of proving directly that replacing $\left(g_+^k\cap h_-^{d-k+1}\right)\oplus h_-^{d-k-1}$ with $g_+^{d-k}$ does not decrease the cross ratio, we will show that this is true infinitesimally, where hyperconvexity properties give us good control, and deduce the statement through the fundamental theorem of calculus.

More specifically we  consider the  map
$$\begin{array}{cccl}\eta:&(h_-,g_+)_{g_-}&\to &\Gr_{d-k}(E),\\
& x&\mapsto &\left(x^{d-k+1}\cap g_+^k\right)\oplus x^{d-k-1}.
\end{array}$$ 
This continuously interpolates  between $\left(g_+^k\cap h_-^{d-k+1}\right)\oplus h_-^{d-k-1}$ and $g_+^{d-k}$:
\begin{lem}\label{l.6.8}
Assume that $\rho$ satisfies property $H_k$. Then
$\eta(x)\to g_+^{d-k}$
for $x\to g_+$.
\end{lem}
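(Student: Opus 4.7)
The plan is to recognize the map $\eta$ as the restriction to a slice of a $\rho$-equivariant, continuous map $F \colon \partial\Gamma^{(2)} \to \Gr_{d-k}(E)$ whose limit behaviour is governed by Proposition \ref{prop:continuity}. Concretely, set
$$F(u,v) := \bigl(v^{d-k+1}\cap u^{k}\bigr)\oplus v^{d-k-1},$$
so that $\eta(x) = F(g_+,x)$. The two claims to check are that $F$ is well-defined, continuous and $\rho$-equivariant, and that the hypothesis of Proposition \ref{prop:continuity} (in degree $d-k$, using that $\rho$ is also $(d-k)$-Anosov by Remark \ref{rem ansov rep properties}) holds.

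The first claim is routine: $k$-Anosov transversality $u^k\cap v^{d-k}=0$ combined with a dimension count forces $\dim(u^k\cap v^{d-k+1})=1$, and the intersection is transverse to $v^{d-k-1}\subset v^{d-k}$, so $F(u,v)$ is a genuine $(d-k)$-plane depending continuously on the pair $(u,v)\in\partial\Gamma^{(2)}$. Equivariance is inherited from $\xi^{k}$ and $\xi^{d-k\pm 1}$.

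The heart of the argument, and the only step where property $H_k$ is used, is to show that for every pairwise distinct triple $x,y,z\in\partial\Gamma$ one has $F(x,y)\pitchfork z^{k}$. If $v\in F(x,y)\cap z^{k}$ decomposes as $v=a+b$ with $a\in y^{d-k+1}\cap x^{k}$ and $b\in y^{d-k-1}$, then $a\in y^{d-k+1}\cap (z^{k}+y^{d-k-1})$, which by the modular law equals $(y^{d-k+1}\cap z^{k})+y^{d-k-1}$. Property $H_k$ makes the sum
$$(x^{k}\cap y^{d-k+1})+(z^{k}\cap y^{d-k+1})+y^{d-k-1}$$
direct inside $y^{d-k+1}$, whence $a=0$. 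Then $v=b\in y^{d-k-1}\cap z^{k}\subseteq y^{d-k}\cap z^{k}=0$, again by $k$-Anosov transversality. This is the key step, but given property $H_k$ it reduces to two applications of standard linear algebra.

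With these verifications in place, Proposition \ref{prop:continuity} gives $\lim_{v\to u}F(u,v)=u^{d-k}$ for every $u\in\partial\Gamma$. Specializing to $u=g_+$ yields $\eta(x)=F(g_+,x)\to g_+^{d-k}$ as $x\to g_+$, which is the desired statement.
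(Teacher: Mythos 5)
Your proof is correct and takes the same approach as the paper: both recognize $\eta$ as a slice of the equivariant map $F(u,v)=(v^{d-k+1}\cap u^k)\oplus v^{d-k-1}$ and invoke Proposition \ref{prop:continuity} (in degree $d-k$), with property $H_k$ supplying the required transversality $F(x,y)\tv z^k$. The paper's proof is a one-line citation that only records a single instance of the transversality (against $g_-^k$), whereas you carry out the full verification for arbitrary triples via the modular-law decomposition — which is precisely what Proposition \ref{prop:continuity} actually requires — so your write-up is a faithful and slightly more complete rendering of the same argument.
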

\begin{proof}
This follows from Proposition \ref{prop:continuity}, as property $H_k$ guarantees that 
$$g^k_- \tv \left((x^{d-k+1}\cap g_+^k)\oplus x^{d-k-1}\right)$$
for all $x\in \dg\backslash\{g_{\pm}\}$.
\end{proof}
The goal of the second step is then to prove the following:
\begin{prop}\label{prop.step-2 cross ratio increases} If $\r:\G\to\SL(E)$ satisfies properties $H_{k-1},H_{k},H_{k+1}$, $H_{d-k-1},H_{d-k+1}$ and $C_{k-1},C_k$. Then
$$x\mapsto \gc_{d-k}\left(h_-^{d-k},h_+^k, gh_+^k, \eta(x) \right)$$ is non-decreasing for $x\in(h_-,g_+)_{g_-}$ moving towards $g_+$.
\end{prop}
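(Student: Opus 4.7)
The plan is to invoke Rademacher's theorem on the Lipschitz curve $\eta$ from Lemma \ref{lem.col.combination of k-1,k+1 is lipschitz} to obtain differentiability almost everywhere, with positive derivative of the form $a\Psi(x)+b\Phi(x)$ for $a,b\geq 0$ not both zero. Since $\gc_{d-k}$ is smooth in its fourth entry, the composition $x\mapsto\gc_{d-k}(h_-^{d-k},h_+^k,gh_+^k,\eta(x))$ is Lipschitz, and by the fundamental theorem of calculus strict monotonicity follows once its derivative is strictly positive almost everywhere. Linearity of the differential in the tangent vector reduces this to the two one-directional strict inequalities
$$d_{\eta(x)}\gc_{d-k}(h_-^{d-k},h_+^k,gh_+^k,\,\cdot\,)(\Psi(x))>0\quad\text{and}\quad d_{\eta(x)}\gc_{d-k}(h_-^{d-k},h_+^k,gh_+^k,\,\cdot\,)(\Phi(x))>0.$$

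For each of these inequalities, my plan is to reduce $\gc_{d-k}$ to a projective cross ratio on a two-dimensional quotient plane, and then to exploit the hyperconvex curve in $\Pp^2$ furnished by Proposition \ref{prop.hyperconvex proj to RP2}. Concretely, $\eta(x)$ is a hyperplane of $x^{d-k+1}$ that contains $x^{d-k-2}$, so its image in the plane $X:=x^{d-k+1}/x^{d-k-2}$ is a line, and $\Psi(x),\Phi(x)$ correspond to infinitesimal motions of this line along, respectively, the projection of $\xi^{d-k-1}$ and the projection of the curve $\xi^{d-k+1}_{g_+}$. By Proposition \ref{prop.hyperconvex proj to RP2} (which uses $H_{k-1},H_k,C_{k-1}$ for the vertical computation and $H_k,H_{k+1},C_k$ for the horizontal one) these projections sit on hyperconvex $C^1$-curves. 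Proposition \ref{prop.projective and k-cross ratio} then identifies $\gc_{d-k}$ with a projective cross ratio of four points on such a curve; Lemma \ref{lem.non-zero derivative of projective cross ratio} guarantees nonvanishing of the derivative, and its sign is controlled by the cyclic order of the four projected points, which is inherited from the cyclic order $(h_-,h_+,gh_+,x,g_+)$ on $\dg$ via Proposition \ref{prop.properties of proj Px}. Because $x$ moves towards $g_+$ in the positive direction along $\dg$, both directional derivatives come out strictly positive.

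The hardest part will be organizing the quotients correctly and verifying that the three fixed entries $h_-^{d-k}$, $h_+^k$, $gh_+^k$ project to well-defined and pairwise distinct points of the hyperconvex curve, so that Proposition \ref{prop.projective and k-cross ratio} genuinely applies and the resulting projective cross ratio is nondegenerate. This is exactly where $H_{d-k-1}$ and $H_{d-k+1}$ enter (to identify the tangent directions $\Psi$, $\Phi$ and to keep the image of $\eta$ inside a $C^1$-surface), while $C_{k-1}$ and $C_k$ guarantee that the projected curve is truly hyperconvex in its projective plane. Once the two directional lemmas are established, combining them with the Lipschitz chain rule and integrating over $(h_-,g_+)_{g_-}$ yields the claimed monotonicity of $x\mapsto\gc_{d-k}(h_-^{d-k},h_+^k,gh_+^k,\eta(x))$.
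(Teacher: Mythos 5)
Your overall architecture matches the paper's: Rademacher on the Lipschitz curve $\eta$, the fundamental theorem of calculus, the decomposition of the positive derivative as $a\Phi(x)+b\Psi(x)$ with $a,b\geq 0$, and the reduction via Proposition \ref{prop.projective and k-cross ratio} to a projective cross ratio whose monotonicity is read off a hyperconvex curve produced by Proposition \ref{prop.hyperconvex proj to RP2}. However, there are two genuine gaps in the sign determination that your plan does not close.

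First, the direction in which the varying fourth slot of the projective cross ratio moves as $t>0$ is \emph{not} a consequence of Proposition \ref{prop.properties of proj Px}. That proposition controls the cyclic ordering of the images $P_x(y^k)$ of boundary points, but the curve you differentiate is tangent to $\xi^{d-k\pm 1}$, not of the form $P_x(y^k)$, and determining on which side of the fixed points it heads requires a separate argument. In the paper's proof this is exactly where the strongly positively ratioed property is invoked: properties $H_{k-1},H_{d-k+1}$ give that $\rho$ is $(k-1)$-strongly positively ratioed (Theorem \ref{prop.hyperconvex implies positively ratioed}), which controls the motion of $[x_t^{d-k+1}]$ for the $\Phi$-direction; symmetrically $H_{k+1},H_{d-k-1}$ give $(k+1)$-strongly positively ratioed for the $\Psi$-direction. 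This positivity input is then combined with the hyperconvexity of $\pi_X$ (resp.\ $\pi_Y$) to place five, not four, points in the correct cyclic order on the relevant $\mathbb{RP}^1$ before one can apply Lemma \ref{lem.symmetry of projective cro}(9). Your plan omits this entirely, so the sign of the directional derivative is undetermined.

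Second, a single quotient plane $X=x^{d-k+1}/x^{d-k-2}$ cannot host both directional computations. For the $\Psi$-direction (varying $x^{d-k-1}$) the curve $y_t^{d-k}$ stays inside $y^{d-k+1}$ and contains $y^{d-k-2}\oplus (y^{d-k+1}\cap g_+^k)$, so it projects to a line in $\Pp(y^{d-k+1}/y^{d-k-2})$ and your $X$ works. But for the $\Phi$-direction (varying $x^{d-k+1}\cap g_+^k$) the curve $x_t^{d-k}$ lies in $(x^{d-k+2}\cap g_+^k)\oplus x^{d-k-1}$ and in general leaves $x^{d-k+1}$ as soon as $t\neq 0$; the relevant plane is therefore $x^{d-k+2}/x^{d-k-1}$, obtained from Definition \ref{def.RP2 projection} with $k$ replaced by $k-1$. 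This shift is precisely why the vertical lemma needs $H_{k-1},C_{k-1}$ rather than $H_k,C_k$. Your proposal states the right property lists in passing, but the geometric setup you describe (both motions as infinitesimal displacements of a single line in a single $\Pp(X)$) would break at this point.
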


As already mentioned we want to prove the infinitesimal version of Proposition \ref{prop.step-2 cross ratio increases}; however, while it follows from property $H_{d-k-1}$ and  $H_{d-k+1}$ respectively  that $\xi^{d-k-1}$ and $\xi^{d-k+1}$ have $C^1$ images, 
and the same is true for  the curve
 $$\begin{array}{cccl}
\xi^{d-k+1}_{g_+}:&\dg\backslash\{g_+\}&\to &\Pp(g_+^{k})\\ &x&\mapsto &x^{d-k+1}\cap g_+^{k},
\end{array}$$ 
there might not be a way to reparametrize $\dg$ to make both $\xi^{d-k-1}$ and $\xi^{d-k+1}_{g_+}$ simultaneously differentiable.  Consequently the image of $\eta$ might not be a $C^1$ submanifold of $\Gr_{d-k}(E)$.

\begin{remark}
If $k=1$, i.e. in the 'projective setting', the curve $x\mapsto \eta(x)= g_+^1\oplus x^{d-2}$ has already $C^1-$image. Thus  the proof simplifies slightly as we can directly prove Proposition \ref{prop.step-2 cross ratio increases}, with the same proof as Lemma \ref{lem.horizontal fol}.
\end{remark}

We overcome this difficulty by considering  $\eta$ as the diagonal in the surface $\Sigma\subset \Gr_{d-k}(E)$ which is the image of the map 
$$\begin{array}{cccl}
\xi_{g_+}^{d-k+1}\oplus \xi^{d-k-1}:& \left((h_-,g_+)_{g_-}\right)^2&\to &\Gr_{d-k}(E)\\ 
&(z,w)&\mapsto& \xi_{g_+}^{d-k+1}(z) \oplus\xi^{d-k-1}(w).
\end{array}$$
Since $\rho$ is $k$-Anosov such map is well defined, and since both $\xi_{g_+}^{d-k+1}$ and $\xi^{d-k-1}$ have $C^1$ image, the surface $\Sigma$ is indeed a $C^1$ submanifold of $\Gr_{d-k}(E)$. For a point $x\in (h_-,g_+)_{g_-}$, we will denote by $\Phi_x$ (resp. $\Psi_x$) in $T_{\eta(x)}\Gr_{d-k}(E)$ a chosen vector tangent to the horizontal (resp. vertical) leaf to the surface $\Sigma$. We will furthermore assume that the sign of these vectors is induced by the orientation of the interval $(h_-,g_+)_{g_-}$.

\begin{lem}\label{lem.col.combination of k-1,k+1 is lipschitz}
If $\r:\G\to\SL(E)$ satisfies properties $H_{d-k-1}$, $H_{d-k+1}$, then the image of $\eta$
is a Lipschitz submanifold of $\Gr_{d-k}(E)$, i.e. locally the graph of a Lipschitz map. Whenever defined, the derivative of $\eta$ is a non-negative linear combination of the vectors $\Phi_x$ and $\Psi_x$.
\end{lem}
\begin{proof}
If we choose smooth parametrizations $\varphi,\psi: [0,1]\to (h_-,g_+)_{g_-}$ such that 
$$\xi^{d-k-1}\circ \psi:[0,1]\to \Gr_{d-k-1}(E),\quad \xi_{g_+}^{d-k+1}\circ \varphi:[0,1]\to \Pp(g_+^{k})$$ are smooth maps, then 
$$\begin{array}{cccl}
F:&[0,1]^2&\to &\Gr_{d-k}(E)\\ 
&(s,t)&\mapsto& (\xi_{g_+}^{d-k+1}\circ \varphi(s)) \oplus(\xi^{d-k-1}\circ \psi (t)).
\end{array}$$
gives a $C^1$ parametrization of the surface $\Sigma$.

By construction $\varphi$ and $\psi$ are strictly monotone maps,  thus in particular invertible. As a result,  denoting by $\Delta\subset ((h_-,g_+)_{g_-})^2$ the diagonal, we have that  $D:= (\varphi^{-1},\psi^{-1})(\Delta)\subset[0,1]^2 $ is the graph  of a monotone map, in particular a Lipschitz submanifold. Since $\eta((h_-,g_+)_{g_-})=F(D)$ is the $C^1$ image of a Lipschitz submanifold, it has itself Lipschitz image. The claim about the derivative follows from the analogue claim on the curve $D\subset [0,1]^2$, which is the graph of a monotone map. 
\end{proof}
In particular it follows from Rademacher's theorem that $\eta$  has almost everywhere a well defined derivative, and the fundamental theorem of calculus applies. As a result Proposition \ref{prop.step-2 cross ratio increases}  follows as soon as the next two lemmas are established.

\begin{lem}\label{lem.vertical fol}
Let, as above, $\Phi_x$ denote the tangent to the horizontal leaf of  the surface $\Sigma$ at $\eta(x)$. Then properties $H_{k-1},H_k,H_{d-k+1}, C_{k-1}$ guarantee that
\begin{equation}\label{e.Phi}
d_{\eta(x)} \gc_{d-k} (h_-^{d-k},h_+^k, gh_+^k, \cdot) (\Phi(x))\geq 0.
\end{equation}
\end{lem}
\begin{proof}
In order to verify the claim it is enough to show that, for a smooth curve $x_t:(-\epsilon,\epsilon)\to \Gr_{d-k}(E)$ with $x_0=\eta(x)$, and $\dot x_0=\Phi_x$ it holds 
\begin{align}\label{e.Phi1}
\left.\frac{d}{dt}\right|_{t=0} \gc_{d-k} (h_-^{d-k},h_+^k, gh_+^k, x_t) \geq 0. 
\end{align}
We will prove this in two steps, first we will reduce the verification to checking monotonicity of a suitable projective cross ratio along a preferred path $x_t$, as expressed in Equation \eqref{eq.non vanishing cross ratio derivative}, then we will use hyperconvexity of the $(k-1)$-th $\R\P^2$-projection 
to check that Equation \eqref{eq.non vanishing cross ratio derivative} holds true.

Recall that $\Phi_x\in T_{\eta(x)}\Gr_{d-k}(E)$ is tangent to the horizontal leaf given by $\xi^{d-k+1}_{g_+}(w)\oplus x^{d-k-1}$, as $w$ varies in $(h_-,g_+)_{g_-}$. As a result,  we can assume that $x^{d-k-1}$ is always contained in $x_t$, furthermore since the tangent to  $\xi^{d-k+1}_{g_+}$ at $x$ is the tangent to the projective line $\P(x^{d-k+2}\cap g_+^k)$ (Proposition \ref{pro 1-hyperconvex for property Hk}), we can choose $x_t$ contained in  $x^{d-k+2}$.

This is helpful in reducing to a projective cross ratio: if we denote by $\ov X_g$ the space  $\ov X_g=(x^{d-k+2}\cap g_+^k)\oplus x^{d-k-1}$  and by $X_g$ its two dimensional quotient $X_g:=\quotient{\ov X_g}{x^{d-k-1}}$, and apply the cocycle identity of $\gc_{d-k}$ and Corollary \ref{cor.projective and k-cross ratio}, we get 
\begin{align}\label{eq.reduce to pcr}
&\gc_{d-k}\left(h_-^{d-k},h_+^k, gh_+^k, x_t \right)=\\
&\gc_{d-k}\left(h_-^{d-k},h_+^k, gh_+^k, x_0 \right)\gc_{d-k}\left(x_0,h_+^k, gh_+^k, x_t \right)=\nonumber \\
&\gc_{d-k}\left(h_-^{d-k},h_+^k, gh_+^k, x_0 \right) \pc ([x_0]_{X_g},[h_+^k\cap \overline X_g]_{X_g},[ gh_+^k\cap \overline X_g]_{X_g},[x_t]_{X_g}).\nonumber
\end{align}
Property $H_k$ implies that the quantity $ \gc_{d-k}\left(h_-^{d-k},h_+^k, gh_+^k,x_0\right)$ is always positive:  it is never zero, it is continuous in $x\in (h_-,g_+)_{g_-}$ and it is positive close to $h_-$ (Corollary \ref{cor.root compared to grassmannian cro}).
As a result, in order to prove the lemma, it  is enough to show that 
\begin{equation}\label{eq.non vanishing cross ratio derivative}
\left.\frac{d}{dt}\right|_{t=0} \pc ([x_0]_{X_g},[h_+^k\cap \overline X_g]_{X_g},[ gh_+^k\cap \overline X_g]_{X_g},[x_t]_{X_g})\geq  0.
\end{equation}

We will verify Equation \eqref{eq.non vanishing cross ratio derivative} showing that, for $t$ small, the four points  $([x_0]_{X_g},[h_+^k\cap \overline X_g]_{X_g},[ gh_+^k\cap \overline X_g]_{X_g},[x_t]_{X_g})$ are in this cyclic order on the line $\P(X_g)$. To this goal, set $X:=\quotient{x^{d-k+2}}{x^{d-k-1}}$ and recall that it follows from Proposition \ref{prop.hyperconvex proj to RP2} that, since $\r$ satisfies properties $H_{k-1},C_{k-1}$, the $(k-1)$-th $\R\P^2$ projection $\pi_x:\dg\to\calF(X)$ defines a hyperconvex curve.

\begin{figure}[h]
\begin{tikzpicture}
\begin{scope}
    \clip (-6,-1) rectangle (5,3);
\draw (0,0) circle [radius=2cm];
\filldraw (0,2) circle [radius=1pt] node[above right]{$\pi_x^{(1)}(g_+)$};
\draw (-2.9,2) [red, thick] to (2.5, 2);
\node at (-2.9, 2) [ left] {$X_g=\pi_x^{(2)}(g_+)$};
\end{scope}
\node at (-5, 3) {$\P X=\P\left(\quotient{x^{d-k+2}}{x^{d-k-1}}\right)$};

\filldraw (-2,0) circle [radius=1pt] node[left]{$\pi_x^{(1)}(h_+)$};
\draw (-2,-1) to (-2, 2.2);
\node at (-2, -1) [left] {$\pi_x^{(2)}(h_+)$};
\filldraw (-2,2) circle [radius=1pt];

\filldraw (2,0) circle [radius=1pt] node[right]{$\pi_x^{(1)}(x)$};
\draw (2,-1) to (2, 2.2);
\node at (2,-1) [right] {$\pi_x^{(2)}(x)$};
\filldraw (2,2) circle [radius=1pt];
\node at (2,1.7) [right] {$[x_0]$}; 

\draw (1.4,2.6) to (2,0);
\node at (1.4,2.6) [above] {$[x_t]$}; 
\filldraw (1.54,2) circle [radius=1pt];

\draw (2,0) to (-2,0);
\draw (2,0) to (0,2);

\begin{scope}
    \clip (0,-0.3) rectangle (2.3,2);
\draw[blue] (2,0) circle [radius=.5cm];
\end{scope}

\draw (-0.58,2.15) to (-2.6,0.4);
\node at (-3.3,0.45) [above] {$\pi_x^{(2)}(gh_+)$}; 
\filldraw (-0.75,2) circle [radius=1pt];
\end{tikzpicture}
\caption{Hyperconvexity of $\pi_x$ in an affine chart. We assume $t>0$, i.e. $x_t$ \emph{moves towards} $g_+$. The blue half circle identifies with $\P(\quotient{X}{ \pi_x^{(1)}(x)})$.}\label{fig:hypconvex-2}
\end{figure}
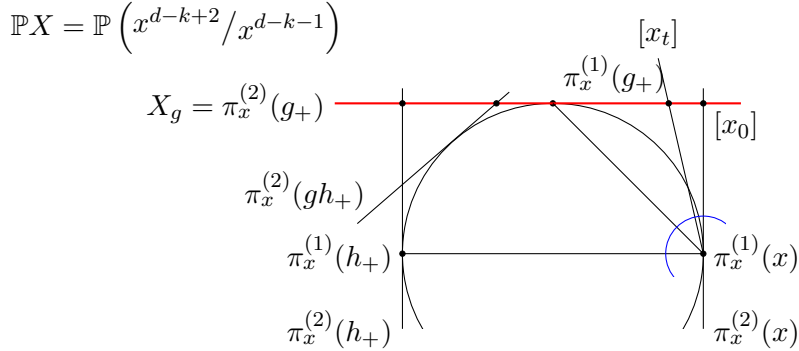
Note that $X_g=\pi_x^{(2)}(g_+)$ and thus $[x_0]_{X_g}=\pi_x^{(2)}(g_+)\cap \pi_x^{(2)}(x)$.
Then hyperconvexity of $\pi_X$ implies that, for $t$ small enough,  the four points
$$\left([\pi_x^{(2)}(gh_{+})\cap \pi_x^{(2)}(g_{+})],[\pi_x^{(2)}(h_{+})\cap \pi_x^{(2)}(g_{+})],[x_0], [\pi_x^{(1)}(g_{+})] \right)$$
are in that cyclic order on $\Pp(X_g)=\P(\pi_x^{(2)}(g^k_{+}))$ - cfr. Figure \ref{fig:hypconvex-2}. 

By assumption $\r$ satisfies properties $H_{k-1},H_{d-k+1}$, and thus is $(k-1)-$positively ratioed (Theorem \ref{prop.hyperconvex implies positively ratioed}); in particular for the linear approximation $L_x:(-\epsilon,\epsilon)\to \Gr_{d-k-1}(E)$ of $\xi^{d-k+1}$ at $x$ (recall Definition \ref{d.linearapp}) which is defined by $[x_t]_{X_g}= [L_x(t)]_{X}\cap \pi_x^{(2)}(g_+)$, we have 
$$\left.\frac{d}{dt}\right|_{t=0} \gc_{k-1}(h_+^{k-1},x^{d-k+1},L_x(t),g_+^{k-1})>0,$$
and thus $\left(\pi_x^{(1)}(h_+),\pi_x^{(2)}(x),[L_x(t)]_X,\pi_x^{(1)}(g_+)\right)$ for $t>0$ descend to points in that cyclic order on $\P(\quotient{X}{ \pi_x^{(1)}(x)})$ (cfr. Corollary \ref{cor.projective and k-cross ratio}). Using $ [x_t]_{X_g}= [L_x(t)]_{X}\cap \pi_x^{(2)}(g_+),$ and the hyperconvexity of $\pi_x$ we derive that, for all $t$ positive and small enough, 
$$\left([\pi_x^{(2)}(gh_{+})\cap \pi_x^{(2)}(g_{+})],[\pi_x^{(2)}(h_{+})\cap \pi_x^{(2)}(g_{+})],[x_0],[x_t], [\pi_x^{(1)}(g_{+})] \right)$$
are in that cyclic order on $\Pp(X_g)=\P(\pi_x^{(2)}(g_{+}))$ - cfr. again Figure \ref{fig:hypconvex-2}.

Thus for all $t$ positive and small enough, we can derive from the properties of the projective cross ratio (cfr.  Lemma \ref{lem.symmetry of projective cro}$(9)$) that
\begin{align*}
&\pc^{X_g} ([x_0],[h_+^k\cap \overline X_g],[ gh_+^k\cap\overline X_g],[x_t])>1\\
\Longrightarrow & \left.\frac{d}{dt}\right|_{t=0} \pc^{X_g} ([x_0],[h_+^k\cap\overline X_g],[ gh_+^k\cap\overline X_g],[x_t])\geq 0.\qedhere
\end{align*}
\end{proof}

\begin{lem}\label{lem.horizontal fol}
 Let, as above $\Psi_y$ denote the tangent to the vertical leaf of the surface $\Sigma$ at $\eta(y)$. Then properties $H_{k},H_{k+1},H_{d-k-1}, C_{k}$ guarantee that
$$d_{\eta(y)} \gc_{d-k} (h_-^{d-k},h_+^k, gh_+^k, \cdot) (\Psi_y)\geq 0.$$
\end{lem}

\begin{proof}
The proof is analogous to the proof of Lemma \ref{lem.vertical fol}. To avoid confusion with the proof of that lemma we write now $y\in (h_-,g_+)_{g_-}$. As above it is enough to show that  for a chosen curve $y_t:(-\epsilon,\epsilon)\to \Gr_{d-k}(E)$ with $y_0=\eta(y)$, $\dot y_0=\Psi_y$, 
\begin{align}\label{e.Phi1}
\left.\frac{d}{dt}\right|_{t=0} \gc_{d-k} (h_-^{d-k},h_+^k, gh_+^k, y_t) \geq 0. 
\end{align}
Observe that we can choose $y_t$ in the pencil\footnote{Recall Definition \ref{d.linearapp}} 
$\P\left( \quotient{y^{d-k+1}}{y^{d-k-2}\oplus \left(y^{d-k+1}\cap g_+^k\right)}\right)$:
indeed we can choose $y_t$ as the direct sum of the line $y^{d-k+1}\cap g_+^k$ and a linear approximation of $\xi^{d-k-1}$ at $y$.

If we set
\begin{align*}
Y&:= \quotient{y^{d-k+1}}{y^{d-k-2}}\\
Y_g&:= \quotient{y^{d-k+1}}{\left( (y^{d-k+1}\cap g_+^k)\oplus y^{d-k-2}\right)  } 
\end{align*}
we can use the same argument as in the first step of the proof of Lemma \ref{lem.vertical fol} to deduce that it is enough to show that
\begin{equation}\label{e.pcry}
\left.\frac{d}{dt}\right|_{t=0} \pc^{Y_g} ([y_0],[h_+^k\cap y^{d-k+1}],[ gh_+^k\cap y^{d-k+1}],[y_t]) \geq 0.
\end{equation}

Since $\r$ satisfies properties $H_{k},C_{k}$, the $k$-th $\R\P^2$-projection $\pi_y:\dg\to\calF(Y)$
is hyperconvex (Proposition \ref{prop.hyperconvex proj to RP2}). Since $\Pp(Y_g)\simeq\P(\quotient{Y}{ \pi_y^{(1)}(g_+)})$,  hyperconvexity implies that 
$$\left([\eta(y)]_{Y_g}=[\pi_y^{(1)}(y)]_{Y_g},[\pi_y^{(2)}(g_{+})]_{Y_g},[\pi_y^{(1)}(gh_{+})]_{Y_g}, [\pi_y^{(1)}(h_{+})]_{Y_g}\right)$$
are in that cyclic order on $\Pp(Y_g)\simeq\P\left(\quotient{Y}{ \pi_y^{(1)}(g_+)}\right)$ - compare Figure \ref{fig:hypconvex-proof2}.

 \begin{figure}[h]
\begin{tikzpicture}
\node at (-3.5,2.5)[above] {$\P(Y)=\P\left(\quotient{y^{d-k+1}}{y^{d-k-2}}\right)$};
\begin{scope}
    \clip (-2,-1.2) rectangle (2,2);
\draw (0,0) circle [radius=2cm];
\draw[blue] (0,2) circle [radius=.5cm];
\end{scope}

\filldraw (0,2) circle [radius=1pt] node[above]{$\pi_y^{(1)}(g_+)$};
\draw (-2,2) to (2.3,2);
\filldraw (2,2) circle [radius=1pt];

\filldraw (-1.91,-0.6) circle [radius=1pt] node[left]{$\pi_y^{(1)}(h_+)$};
\draw (-1.71,-1.2) to (-2.5,1);
\node at (-2.5,0.5) [left]{$\pi_y^{(2)}(h_+)$};

\filldraw (-1.5,1.3) circle [radius=1pt];
\node at (-1.5,1.5) [left]{$\pi_y^{(1)}(gh_+)$};
\node at (0.8,0.8) [below]{$[\eta(y)]$};

\draw (0,2) to (-1.5,1.3);
\draw (0,2) to (-1.91,-0.6);
\draw (0,2) to (2,0);

\filldraw (2,0) circle [radius=1pt] node[right]{$\pi_y^{(1)}(y)$};
\draw (2,-1) to (2,2.2);
\node at (2,2.2) [above]{$\pi_y^{(2)}(y)$};

\draw (0,2) to (2.4,0.3);
\node at (2.3,0.5) [right]{$[y_t]$};
\filldraw (2,0.58) circle [radius=1pt];

\end{tikzpicture}
\caption{Hyperconvexity of $\pi_Y$, where $t>0$, i.e. $y_t$ 'moves towards' $g_+$. The blue half circle can be identified with $\P(Y_g)$.}\label{fig:hypconvex-proof2}
\end{figure}
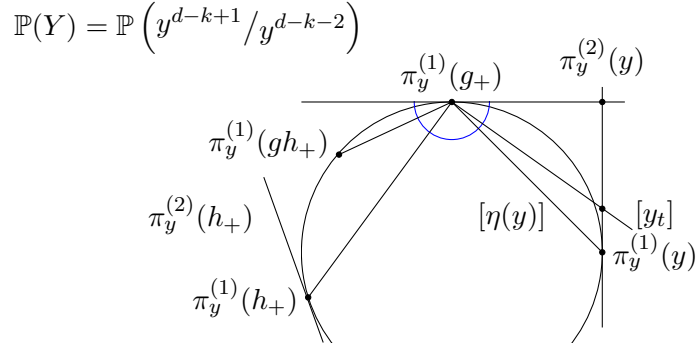
Moreover for the linear approximation $L_y$ of $\xi^{d-k-1}$ at $y$ defined by $ [L_y(t)]_Y= [y_t]_{Y}\cap \pi_y^{(2)}(y)$, it holds 
$$\left.\frac{d}{dt}\right|_{t=0} \gc_{k+1}(h_+^{k+1},y^{d-k-1},L_y(t),g_+^{k+1})>0$$
because  $\r$ satisfies properties $H_{k+1},H_{d-k-1}$ and thus is $(k+1)-$positively ratioed.
This implies that, for $t>0$, the points
$$\left(\pi_y^{(2)}(g_+)\cap \pi_y^{(2)}(y),[L_y(t)],\pi_y^{(1)}(y),\pi_y^{(2)}(h_+)\cap \pi_y^{(2)}(y)\right)$$
 are in that cyclic order on $\P(\pi_y^{(2)}(y))$ (see Corollary \ref{cor.projective and k-cross ratio}). Since $ [L_y(t)]_Y= [y_t]_{Y}\cap \pi_y^{(2)}(y)$, we derive for any sufficiently small positive $t$ that
$$\left([\eta(y)]_{Y_g}=[y^{d-k-1}]_{Y_g},[y_t]_{Y_g},[\pi_y^{(2)}(g_{+})]_{Y_g},[\pi_y^{(1)}(gh_{+})]_{Y_g}, [\pi_y^{(1)}(h_{+})]_{Y_g}\right)$$
are in that cyclic order on $\Pp(Y_g)\simeq\P(\quotient{Y}{ \pi_y^{(1)}(g)})$ - compare Figure \ref{fig:hypconvex-proof2}. This finishes the proof, as we can now derive with Lemma \ref{lem.symmetry of projective cro}$(9)$ that Equation \eqref{e.pcry} holds.
\end{proof}

\subsection*{Step 3: Conclusion}
We know from Corollary \ref{cor.root compared to grassmannian cro} that
$$\begin{array}{l}
\left(1-\frac{\lambda_{k+1}}{\lambda_k}(h_{\rho})\right)^{-1}<
\gc_{d-k}\left(h_-^{d-k},h_+^k, gh_+^{k}, \left(g_+^k\cap h_-^{d-k+1}\right)\oplus h_-^{d-k-1} \right)
\end{array},
$$
and from Proposition \ref{prop.step-2 cross ratio increases} that
$$x\mapsto \gc_{d-k}\left(h_-^{d-k},h_+^k, gh_+^k, \left(g_+^k\cap x^{d-k+1}\right)\oplus x^{d-k-1} \right)$$ 
is non-decreasing for $x\in (h_{-},g_+)_{g_-}$ moving towards $g_+$. Thus
$$ \left(1-\frac{\lambda_{k+1}}{\lambda_k}(h_{\rho})\right)^{-1} < \gc_{d-k}\left(h_-^{d-k},h_+^k, gh_+^{k}, g_+^{d-k}\right).$$
Moreover $\r$ is $(d-k)-$positively ratioed, because it satisfies properties $H_k,H_{d-k}$ (Theorem \ref{prop.hyperconvex implies positively ratioed}). Since  $h_-,g_-,h_+,gh_+,g_+$ are in that cyclic order, this yields via the cocycle identity (Lemma \ref{lem property of grassmannian cro} $(2)$) that
$$ \gc_{d-k}\left(h_-^{d-k},h_+^k, gh_+^{k}, g_+^{d-k}\right)< \gc_{d-k}\left(g_-^{d-k},h_+^k, gh_+^{k}, g_+^{d-k}\right).$$
Theorem \ref{thm.collar lemma} follows then from Corollary \ref{cor.periods of k cross ratio}, stating that 
$$\gc_{d-k}(g_-^{d-k},h_+^k,gh_+^k, g_+^{d-k})=\frac{\l_1\ldots\l_k}{\l_{d-k+1}\ldots\l_d}(g_{\r}).$$
\addtocontents{toc}{\protect\setcounter{tocdepth}{2}}

\section{A counterexample to the strong collar lemma}\label{sec.strong collar}
The goal of the section is to prove Theorem \ref{thm.INTROcounterexample} from the introduction, which we recall for the reader's convenience (here $\G_{1,1}$ is the fundamental group of the once punctured torus):
\begin{thm}
There is a one parameter family of positive representations $\rho_x:\G_{1,1}\to \PSL(3,\R)$, for $x\in(0,\infty)$, and $\g$, $\d\in\G_{1,1}$ such that 
$$\ell_{\alpha_1}(\rho_x(\g))=\ell_{\alpha_1}(\rho_x(\delta))\to 0$$
as $x$ goes to zero.
\end{thm}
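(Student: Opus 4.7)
The plan is to exhibit $\rho_x$ explicitly using Fock--Goncharov's X-coordinates for the $\PSL(3,\R)$ Hitchin component of the once-punctured torus. Fix an ideal triangulation of $S_{1,1}$ with two triangles meeting along three edges; the associated Fock--Goncharov coordinates consist of one positive triangle invariant per triangle and two positive edge invariants per edge, so eight positive parameters in total. Any positive choice produces a positive representation of $\G_{1,1}$ into $\PSL(3,\R)$, and the image $\rho(\eta)$ of any element is an explicit product of the elementary matrices attached to the coordinates crossed by a lift of $\eta$.

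To ensure the equality $\ell_{\alpha_1}(\rho_x(\g))=\ell_{\alpha_1}(\rho_x(\d))$, exploit a symmetry: there is a mapping class $\iota\in\mathrm{MCG}(S_{1,1})$ that preserves the chosen triangulation, exchanges the two triangles, and induces an involutive permutation of the eight coordinates. Choose $\rho_x$ so that its coordinates are invariant under this permutation; then $\rho_x\circ\iota_{*}$ is conjugate to $\rho_x$ in $\PSL(3,\R)$ and in particular $\ell_{\alpha_1}(\rho_x(\iota_{*}\eta))=\ell_{\alpha_1}(\rho_x(\eta))$ for every $\eta\in\G_{1,1}$. Take $\g$ and $\d:=\iota_{*}(\g)$ to be a pair of simple closed curves interchanged by $\iota$, for instance two standard generators of $\G_{1,1}$ meeting once transversely in $S_{1,1}$; then $\g$ and $\d$ are linked and the desired equality holds for every $x$.

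For the degeneration, let an $\iota$-symmetric pair of edge coordinates tend to $0$ as $x\to 0$, keeping the remaining coordinates fixed and positive. By construction $\rho_x$ stays inside the positive chart for every $x>0$. It then remains to compute the matrix $\rho_x(\g)$ from the explicit Fock--Goncharov formulas, expand its characteristic polynomial in powers of $x$, and verify that the top two eigenvalues coalesce in the limit $x\to 0$, i.e.\ $\lambda_1/\lambda_2(\rho_x(\g))\to 1$ and hence $\ell_{\alpha_1}(\rho_x(\g))\to 0$. The $\iota$-symmetry transfers the conclusion to $\d$ automatically.

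The main obstacle is the last step: one must verify that the specific pair of coordinates chosen to degenerate causes the \emph{top} eigenvalue gap of $\rho_x(\g)$ to close, as opposed to the bottom gap or no gap at all. This reduces to a careful asymptotic expansion of the characteristic polynomial of a $3\times 3$ matrix whose entries are polynomials in $x$, identifying which entries vanish at the leading order in $x$ and checking that the discriminant forces the desired coalescence of the two largest roots. A geometric cross-check is that, conjecturally, such a limit corresponds to a specific boundary point of the positive chart where one curve becomes \emph{partially} degenerate, with the top two eigenvalues fusing while the bottom one remains separated—precisely the behavior that is forbidden in a Fuchsian degeneration by Keen's collar lemma, but which is allowed in genuine $\PSL(3,\R)$ Hitchin deformations.
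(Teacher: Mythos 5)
Your framework is the same as the paper's (Fock--Goncharov coordinates for the once-punctured torus, a symmetry to force $\ell_{\alpha_1}(\rho_x(\g))=\ell_{\alpha_1}(\rho_x(\d))$, then an asymptotic analysis of a characteristic polynomial), but the proposal has a genuine gap precisely at the step you yourself flag as ``the main obstacle,'' and the specific degeneration you choose is the wrong one.

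You propose to keep the triangle invariants fixed and send an $\iota$-symmetric pair of \emph{edge} coordinates to $0$. This is most naturally interpreted as a pinching-type (Thurston shear) degeneration, and pinching drives the $\omega_1$-length \emph{and} the root gap $\lambda_1/\lambda_2$ of the pinched curve to infinity, not to $1$. Indeed, at the Fuchsian point (triangle invariants $\equiv 1$) degenerating edge coordinates is exactly a hyperbolic pinching, and Keen's collar lemma already forbids two linked curves from simultaneously becoming short; nothing in your setup moves you away from that regime in a controlled way. The paper instead keeps \emph{all} shears equal to $0$ (edge coordinates $\equiv 1$) and degenerates the two \emph{triple ratios} to $x$ and $x^{-1}$; this is the ``dual'' degeneration (one triangle's lines become coplanar, the other's planes become concurrent) that genuinely closes the top root gap $\lambda_1/\lambda_2$ of both curves while $\lambda_1/\lambda_3\to\infty$, consistently with the weight collar lemma (Corollary 1.8 in the paper). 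Because you never carry out the asymptotic expansion --- you explicitly defer it --- the proposal does not establish that your degeneration has the claimed effect, and the heuristic you offer in the last paragraph points in the wrong direction.

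A secondary issue: your symmetry argument requires a mapping class $\iota$ that preserves the triangulation and permutes the FG coordinates so that an $\iota$-invariant coordinate choice exists and $\iota_*$ swaps $\g$ and $\d$. If $\iota$ swaps the two triangles, $\iota$-invariance forces the two triangle invariants to be equal; the paper's working example has them equal to $x$ and $x^{-1}$, which is not $\iota$-invariant in this sense, and the equality of lengths is instead obtained by a direct check that $\rho_x(\g)$ and $\rho_x(\d)$ are conjugate (by $\diag(1,-1,1)$), hence have the same characteristic polynomial
\begin{align*}
\chi(\lambda)=\lambda^3-\lambda^2\bigl(4x^{-1/3}+4x^{2/3}\bigr)+\lambda\bigl(4x^{1/3}+4x^{-2/3}\bigr)-1,
\end{align*}
from which $\lambda_1/\lambda_2\to 1$ is extracted by an elementary analysis after substituting $y=x^{-1/3}$. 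To repair your argument you would need to (i) replace the edge-coordinate degeneration with a triple-ratio degeneration (or prove, which I doubt, that your edge degeneration closes the top gap), and (ii) actually perform the eigenvalue asymptotics rather than defer them.
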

\begin{proof}
We realize the once punctured torus as the quotient of a square modulo the identification of parallel sides, in such a way that the puncture is the image of the vertices. We fix the triangulation of such surface whose sides are the positive diagonal, and the two sides of the square. 
Following Fock-Goncharov positive representations of $\G_{1,1}$ are uniquely determined by 6 shear
invariants (corresponding to the three sides of the triangulation) and 2 triple ratios \cite{Fock-Goncharov}. We will set all shear invariants to be zero, while the triple ratios will degenerate (in opposite directions) along the sequence.

Given three flags $([A^1],\langle A^1,A^2\rangle),([B^1],\langle B^1,B^2\rangle),([C^1],\langle C^1,C^2\rangle)\in\calF(\R^3)$ and an identification $\wedge^3\R^3\simeq \R$ their \emph{triple ratio} is defined by
$$\tau(A,B,C)
=\frac{A^1\wedge A^2\wedge B^1}{A^1\wedge A^2\wedge C^1} \cdot\frac{B^1\wedge B^2\wedge C^1}{B^1\wedge B^2\wedge A^1}\cdot\frac{C^1\wedge C^2\wedge A^1}{C^1\wedge C^2\wedge B^1}.$$
It is immediate to check that the value of $\tau(A,B,C)$ doesn't depend on the choices involved.
Moreover given two flags $A,C\in \calF(\R^3)$ and two transverse lines $[B^1],[D^1]\in\P(\R^3)$. The \emph{shears} are defined by (compare \cite[Section 2.6]{Par})%
$$\sigma(A,[B^1],C,[D^1]):=\left(\log (-\pc_{A_1}(A,B_1,D_1,C_1)),\log (-\pc_{C_1}(C,B_1,D_1,A_1))\right).$$
In our example we will assume that all the shears are equal to 0, this corresponds to the points being in harmonic position. 

Observe that given two flags $A,C$ and two lines $[B^1],[D^1]$, the shear has the form $\sigma(A,[B^1],C,[D^1])=(0,0)$ if and only if there exists a basis $e_1,e_2,e_3$ with 
$$\begin{array}{l}
A=([e_1],\langle e_1,e_2\rangle)\\
B_1=[(1,-1,1)^T]\\
C=([e_3],\langle e_3,e_2\rangle)\\
D_1=[(1,1,1)^T]\\
\end{array}
$$

We can then consider the positive representation $\rho_{x}:\G_{1,1}\to \PSL(3,\R)$ of the fundamental group $\G_{1,1}$ of the once punctured torus whose Fock-Goncharov parameters are given by the triangle invariant $x,x^{-1}$ and all whose shears are fixed equal to $(0,0)$.

\begin{center}
\begin{tikzpicture}[scale=2]
\draw (0,0) to [red] (0,1) to [blue](1,1) to[red] (1,0) to[blue] (0,0);
\draw (0,0) to (1,1);
\node at (.25,.75) {$x^{-1}$};
\node at (.75,.25) {$x$};
\node at (0,.5) [rotate=90] {$<$};
\node at (1,.5) [rotate=90] {$<$};
\node at (0.5,0) {$<<$};
\node at (0.5,1)  {$<<$};
\end{tikzpicture}
\end{center}
We denote by $\g\in\G_{1,1}$ the element realizing the identification of the vertical sides, and by $\delta\in\G_{1,1}$ the element realizing the identification of the horizontal sides. Up to conjugating the representation we can assume that the two endpoints of the diagonal are associated to the standard flags 
$$\begin{array}{l}
\underline\infty:=([e_1],\langle e_1,e_2\rangle)\\
\underline 0:=([e_3],\langle e_3,e_2\rangle).
\end{array}$$
 In order to compute representatives of $\rho_{x}(\g)$ and $\rho_{x}(\delta)$ we need to compute the flags $\underline{t}$, $\underline{s}$ determined by 
\begin{align*}
\tau(\underline{\infty},\un{s},\un{0})&=x^{-1}\\
\tau(\underline{0},\un{t},\un{\infty})&=x
\end{align*}
As well as the lines in the images $\g(\un\infty^1)$, $\delta(\un\infty^1)$, which are uniquely determined by our requirements on the shears.
\begin{center}
\begin{tikzpicture}[scale=1.8]
\draw (0,0) to [red] (0,1) to [blue](1,1) to[red] (1,0) to[blue] (0,0);
\draw (0,0) to (1,1);
\node at (.25,.75) {$x^{-1}$};
\node at (.75,.25) {$x$};
\node at (0,.5) [rotate=90] {$<$};
\node at (1,.5) [rotate=90] {$<$};
\node at (0.5,0) {$<<$};
\node at (0.5,1)  {$<<$};
\draw (1,1) to (.75,1.5) to (0,1);
\draw [gray] (0,1) to (.25,1.5) to (.75,1.5);
\draw (1,1) to (1.5,.75) to (1,0);
\draw [gray] (1,0) to (1.5,.25) to (1.5,.75);
\node at (1.5,.5) [rotate=90] {$<$};
\filldraw (0,0) circle [radius=.5pt] node [below left] {$\un 0$};
\filldraw (0,1) circle [radius=.5pt] node [above left] {$\un s$};
\filldraw (1,0) circle [radius=.5pt] node [below right] {$\un t$};
\filldraw (1,1) circle [radius=.5pt] node [above right] {$\un \infty$};
\filldraw (.75,1.5) circle [radius=.5pt] node [above right] {$\rho_{x}(\delta)\cdot\un \infty$};
\filldraw (1.5,.75) circle [radius=.5pt] node [above right] {$\rho_{x}(\g)\cdot\un \infty$};
\end{tikzpicture}
\end{center}
It is easy to check that with our assumptions we have%\marginpar{\J Not checked again}
\begin{align*}
\un t&=\left(\left[\bpm1\\1\\1\epm\right],\left\langle \bpm1\\1\\1\epm, \bpm 1\\0\\-x \epm\right\rangle\right) \quad  &\rho_{x}(\g)\cdot\un \infty^1= \bpm 2x^{-1}+2\\2\\1\epm\\
\un s&=\left(\left[\bpm1\\-1\\1\epm\right],\left\langle \bpm1\\-1\\1\epm, \bpm 1\\0\\-x \epm\right\rangle\right) \quad  &\rho_{x}(\delta)\cdot\un \infty^1= \bpm 2x^{-1}+2\\-2\\1\epm.\\\\
\end{align*}
One directly checks that the matrices for the elements $\rho_{x}(\g),\rho_{x}(\delta)$ are
\begin{align*}
\rho_{x}(\g)&=\sqrt[3]{x^{-1}}\bpm 2x+2&2x+2&1\\ 2x&2x+1&1\\x& x+1&1\epm \\
 \rho_{x}(\delta)&=\sqrt[3]{x^{-1}}\bpm 2x+2&-2x-2&1\\-2x&2x+1&-1\\x&-x-1&1\epm.
\end{align*}
% wolfram code for the matrices \delta=y^(-1/3){{2y+2, -2y-2, 1}, {-2y, 2y+1,-1},{y,-y-1,1}}

The characteristic polynomials of these two matrices are both given by
\begin{align*}
\chi(\lambda)=\lambda^3-\lambda^2(4x^{-\frac{1}{3}}+4x^{\frac{2}{3}})+\lambda(4x^{\frac{1}{3}}+4x^{-\frac{2}{3}})-1.
\end{align*}

We want to consider the limit as $x\to 0$. To simplify the equations we substitute $y=x^{-\frac{1}{3}}$. Hence we get
\begin{align}
&\l_1(y)+\l_2(y)+\l_3(y)=4(y+y^{-2})\label{eq.1}\\
&\l_1(y)\l_2(y)+\l_1(y)\l_3(y)+\l_2(y)\l_3(y)=4(y^2+y^{-1})\label{eq.2}\\
&\l_1(y)\l_2(y)\l_3(y)=1.\label{eq.3}
\end{align}
The eigenvalues are then necessarily positive (cfr. Proposition \ref{prop.INTROroots are positive for Hk}) and are ordered so that $\l_1(y)\geq \l_2(y)\geq \l_3(y)$. Hence Equation \eqref{eq.1} yields that for $y\to\infty$ we have $\lambda_1(y)\to \infty$ and thus by Equation \eqref{eq.3} $\lambda_3(y)\to 0$. Dividing Equation \eqref{eq.1} by $y$ we see that for every $\epsilon>0$ there exists $N_{\epsilon}\in \R$ such that $\lambda_1(y)\slash y> 2-\epsilon$ for all $y\geq N_{\epsilon}$. 

We claim that $k:=\liminf \lambda_2(y)\slash y>0$: if $\liminf \lambda_2(y)\slash y=0$ Equation \eqref{eq.2} implies, by dividing with $y^2$, that $\limsup \lambda_1(y)\slash y=\infty$. In this case Equation \eqref{eq.1} would yield that $\liminf \lambda_2(y)\slash y=-\infty$; a contradiction. This argument yields also $\limsup \lambda_1(y)\slash y<\infty$, and thus $\limsup \lambda_2(y)\slash y<\infty$. Hence if we pass to an increasing sequence $\{y_n\}\subset\R$ such that the limits $A:=\lim_{n\to\infty} \lambda_1(y_n)\slash y_n$, $B:=\lim_{n\to\infty} \lambda_2(y_n)\slash y_n$ exist, then those limits have to satisfy $A+B=4$ and $AB=4$, i.e. $A=2=B$. In particular it follows that $\lim_{y\to\infty} \lambda_1(y)\slash y=2$, $\lim_{y\to\infty} \lambda_2(y)\slash y=2$. This yields $\lim_{y\to\infty} \lambda_1(y)\slash\lambda_2(y)=1$.

Finally, as $\r_x(\g),\r_x(\delta)$ have the same characteristic polynomial, which satisfies $\lambda_1(x)\slash\lambda_2(x)\to 1$ for $x\to 0$, we get the claim.
\end{proof}
Observe that, as the representation $\rho_x$ has unipotent boundary holonomy it is not restriction of a Hitchin representation of the double of the surface. It is however easy to choose small shears $\sigma(x)$ so that the associated sequence $\rho_x'$ of representations has loxodromic boundary holonomy and can therefore be doubled to a Hitchin representation \cite[Section 9.2]{LabMc}.

\bibliography{mybib}
\bibliographystyle{alpha}

\end{document}